\newcommand{\gbar}{\ensuremath{\overline{G}}}
\newcommand{\gtilde}{\ensuremath{\widetilde{G}}}
\renewcommand{\leq}{\leqslant}
\renewcommand{\geq}{\geqslant}
\newcommand{\floor}[1]{\ensuremath{\lfloor{} #1 \rfloor{}}}
\newcommand{\ceil}[1]{\ensuremath{\lceil{} #1 \rceil{}}}
\newcommand{\func}[3]{\ensuremath{#1 : #2\rightarrow #3}}%
\newcommand{\emk}{\ensuremath{\mathcal{E}_{m,k}}}
\newtheorem{theorem}{Theorem}[section]
\newtheorem{lemma}[theorem]{Lemma}
\newtheorem{cor}[theorem]{Corollary}
\newtheorem{clm}{Claim}
\newtheorem{remark}[theorem]{Remark}
\theoremstyle{definition}
\newtheorem{dfn}[theorem]{Definition}
\newtheorem{definition}[theorem]{Definition}
\newcommand{\parts}{\ensuremath{\vec{\sigma}}}
\newcommand{\admat}{\ensuremath{\mathbf{D}}}
\title{The extremal function for structured sparse minors}
\author{Matthew Wales}
\date{\today{}}
\address{DPMMS, University of Cambridge, CB3 0WB, UK}
\begin{document}

\maketitle
\begin{abstract}
    Let $c(H)$ be the smallest value for which $e(G)/|G|\geq c(H)$ implies $H$ is a minor of $G$. We show a new upper bound on $c(H)$, which improves previous bounds for graphs with a vertex partition where some pairs of parts have many more edges than others --- for instance a complete bipartite graph with a small number of edges placed inside one class. We also show a tight matching lower bound for almost all such graphs. We apply these results to show $c(K_{ft/\log t,t}) = (0.638\dotsc+o_{f}(1))t\sqrt{f}$, for $f = o(\log t) = \omega(1)$.
    
\end{abstract}

\section{Introduction}

    A graph $H$ is called a \textsl{minor} of $G$, $G\succ H$, if $H$ can be obtained from $G$ by a series of edge contractions, and vertex and edge deletions. An equivalent condition is the existence of a collection $(V_h)_{h\in H}$ of disjoint non-empty connected subsets of $V(G)$, such that for each edge $hh'\in E(H)$, there is an edge of $G$ between a vertex in $V_h$ and a vertex in $V_{h'}$ (and we say $V_h$ is adjacent to $V_{h'}$). Such a collection of subsets is called a \textsl{model} of $H$ in $G$.
    
    We define $c(H) = \inf \{c: e(G)/|G|\geq c \implies G\succ H\}$, and call this the \textit{minor extremal function} for $H$. Mader \cite{mader67} showed that this function exists for all graphs

        Previous work has studied the minor extremal function for a range of graphs --- in particular complete graphs have been considered by Mader \cite{mader67}, Kostochka \cite{KostochkaKt} and Thomason \cite{thomason_1984, ThomasonComplete}, and complete bipartite graphs with part sizes in fixed ratio $\beta t : (1-\beta)t$  by Myers and Thomason \cite{MyersThomason} --- in these cases the minor extremal function is asymptotically exactly known. For complete bipartite graphs the bound is smaller than for the same size complete graphs by a multiplicative factor $2\sqrt{\beta(1-\beta)}$ arising from treating the classes of the graph differently.
        
        For general graphs, the extremal function is known for almost all graphs with large fixed average degree and any given number of vertices, with a lower bound due to Norin, Reed, Thomason and Wood \cite{NRTW} being matched by an upper bound by Thomason and the author \cite{ThomasonWales}. In this paper, we generalise to two new cases. First, we generalise the fixed average degree result to gain a factor $\gamma_r$, a restriction of the parameter $\gamma$ of Myers and Thomason \cite{MyersThomason}, which is described in the following section.
        
        We also consider complete bipartite graphs $K_{s,t}$. K\"uhn and Osthus \cite{KuhnOsthus} (and later Kostochka and Prince \cite{KP2}) showed that for $s/t$ sufficiently small, the extremal function satisfies  $c(K_{s, t}) = (\frac12+o(1))t$  --- this is essentially tight, since a clique with $s+t-1$ vertices cannot contain such a minor.
        
        They also remarked that the $(\frac12+o(1))t$ lower bound fails to be tight once $s(\log t) /t$ is sufficiently large. This motivates a choice of scaling $s = f(t)t/\log t$, since there is a transition in behaviour of $c(H)$ from linear in $t$ for $f(t) = o(1)$ to no longer linear once $f(t)$ becomes large.
        
       In this paper, we show an asymptotically (in $f(t)\rightarrow\infty$) tight result for complete bipartite graphs, as well as for books\footnote{The book $K^*_{s,t}$ is the graph obtained from $K_{s,t}$ by adding all edges inside the class of size $s$} --- showing both $c(K^*_{ft/\log t, t})$ and  $c(K_{ft/\log t, t})$ are equal to $(2\alpha+o(1))t\sqrt{f}$, where $\alpha$ is the constant of the following definition. 
       
       \begin{dfn}
       \label{D:gammadef}
       The constant $\alpha$ is the maximum value (over $0<p<1$) of $p/(2\sqrt{\log(1/1-p)})$. We can approximate $\alpha = 0.319\dotsc$, and this value is attained at $p = 0.715\dotsc$. 
       \end{dfn}
       
       Myers and Thomason \cite{MyersThomason} introduced a parameter $\gamma(H)$ to derive bounds on $c(H)$ for non-complete graphs. They showed that, provided $\gamma(H)$ is bounded away from zero for a family of interest, this parameter determines $c(H)$ asymptotically. We state their result now in our notation --- we defer defining the parameter to the next section.

        \begin{theorem}[{\cite[Theorem~2.2]{MyersThomason}}]
        Let $H$ be a graph with $t$ vertices. Then \begin{align*}c(H) = \alpha(\gamma(H)+o_t(1))t\sqrt{\log t}.\end{align*}
        \end{theorem}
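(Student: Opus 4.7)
The plan is to establish matching upper and lower bounds of the form $(1\pm o_t(1))\alpha\gamma(H)t\sqrt{\log t}$, following the overall strategy initiated by Mader and refined by Kostochka and Thomason for complete minors, adapted to capture the savings encoded in $\gamma(H)$.

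For the upper bound, I would start with $G$ satisfying $e(G)/\abs{G}\geq\alpha(\gamma(H)+\varepsilon)t\sqrt{\log t}$ and first extract a dense minor $G'$ of $G$ via iterative random contractions (of the Kostochka--Thomason type), producing a minor that is not only dense but also pseudo-random, in the sense that its edge distribution between large vertex subsets is close to that of a binomial random graph of density $p^\ast$, where $p^\ast$ is the argmax in Definition~\ref{D:gammadef}. Inside this $G'$, I would realize $H$ as a minor probabilistically: the parameter $\gamma(H)$ encodes an optimal partition of $V(H)$ into classes whose vertices should be embedded as branch sets of differing sizes --- some vertices of $H$ become singleton branch sets in $G'$, while others are realized as connected sets of size roughly $\sqrt{\log t}/p^\ast$ whose role is to boost their adjacency probability. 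A first- or second-moment calculation on the random assignment then shows that, with positive probability, the prescribed branch sets can be chosen connected and all required cross-adjacencies are present.

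For the lower bound, the construction is a random graph $G(N,p^\ast)$ (or a disjoint union of such graphs of appropriate size) with $N$ chosen so that the expected edge density is $\alpha(\gamma(H)-\varepsilon)t\sqrt{\log t}$. A union bound over all partitions of $V(G)$ into connected branch sets $(V_h)_{h\in V(H)}$ shows that no valid model of $H$ exists with positive probability. The success probability for a fixed partition factors as a connectivity contribution (each $V_h$ induces a connected subgraph of $G$) times an adjacency contribution (the required edges between branch sets appear), the latter being roughly $(p^\ast)^{\abs{E(H)}}$ up to lower-order corrections coming from branch-set sizes. The parameter $\gamma(H)$ is calibrated so that the worst-case choice of branch-set sizes makes this adjacency factor balance the entropy of partition choices exactly when the edge density reaches the claimed threshold.

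The hardest step is the upper bound, specifically extracting a pseudo-random dense minor: Mader's lemma on its own only gives a minor of large minimum degree, whereas the embedding argument requires quasi-random behaviour on all large subsets, which demands an additional random-contraction layer that nevertheless preserves enough density. A secondary difficulty is matching the extremal partition of $V(H)$ witnessing $\gamma(H)$ to a compatible partition of $V(G')$ with branch sets of the right sizes. For the lower bound, the main subtlety is making the union bound over super-exponentially many partitions tight; here the definition of $\gamma(H)$ is engineered precisely so that the dominant term in the log of the expected number of models is $\alpha\gamma(H)t\sqrt{\log t}\cdot\abs{G}$, so that crossing this threshold switches the expected count from $o(1)$ to $\omega(1)$.
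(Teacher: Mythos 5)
This theorem is not proved in the present paper: it is Theorem~2.2 of Myers and Thomason, quoted as background. So there is no ``paper's own proof'' to compare against here, but the paper's methods for the closely related Theorems~\ref{T:mainthmgamma} and~\ref{T:extremalfunction} are the natural yardstick, and they diverge from your plan on both sides.

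For the upper bound, the route taken in the paper (and, as far as I can tell, by Myers and Thomason) is \emph{not} to extract a pseudo-random dense minor via iterative random contractions. Instead one passes to a minor-minimal element of $\mathcal{E}_{m,k}$, which by Lemma~\ref{L:enkprop} automatically has large minimum degree, large connectivity, and every edge in many triangles; one then builds a random partition of $V(G)$ itself (Theorem~\ref{T:mpgl}), with part sizes proportional to the optimal weights $w(v)$ scaled by roughly $\sqrt{\log_{1/(1-p)} t}$ --- not ``singleton versus $\sqrt{\log t}/p^*$'', but a continuum of sizes governed by $w$ --- obtaining an almost-$H$-compatible partition. The few missing adjacencies are then repaired using the connector/projector sets of Theorem~\ref{T:connectorprojector}, which requires only linear connectivity, not quasi-randomness on all large subsets. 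Your proposal asks for much more structure from $G$ than is actually available or needed, and it is unclear how random contractions alone would deliver the uniform quasirandomness you invoke.

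For the lower bound, your description of the adjacency contribution as ``roughly $(p^\ast)^{\abs{E(H)}}$'' is incorrect and conceals the mechanism. For branch sets $V_u,V_v$ the adjacency probability is $1-(1-p^\ast)^{\abs{V_u}\abs{V_v}}$, which is close to $1$ (not to $p^\ast$); writing $w_u = \abs{V_u}/\sqrt{\log_{1/(1-p^\ast)}t}$, the success probability of a fixed model is at most $\exp\bigl(-\sum_{uv\in E(H)}t^{-w_uw_v}\bigr)$, and the constraint $\sum_v w_v \leq N/\sqrt{\log_{1/(1-p^\ast)}t}$ together with the definition of $\gamma(H)$ is what forces $\sum t^{-w_uw_v}$ to be large once $N$ drops below the threshold. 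The naive union bound also needs care: making it close requires $\gamma(H)$ bounded away from $0$ (otherwise the entropy of size vectors and vertex assignments swamps the failure probability); this is exactly the regime Myers--Thomason restrict to, and it is the reason the present paper cannot reuse a pure first-moment bound for the $\gamma(H)=o(1)$ regime and instead develops Lemma~\ref{L:basegraph}, the blowup $G_0(k)$, and the blobbing encoding. You should at least flag the branch-set-size dependence of the adjacency probability explicitly, since $\gamma(H)$ is calibrated precisely against $\sum t^{-w(u)w(v)}\leq t$ and not against $\abs{E(H)}$.
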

        
        This result does not, however, give a qualitative result for a sequence of graphs with $\gamma(H) = o(1)$, beyond that $c(H) = o(t\sqrt{\log t})$. While computing $\gamma(H)$ is in general hard, we can provide the following bounds. If $H$ has $td$ edges, we can bound $\gamma(H)\leq \sqrt{\frac{\log d}{\log t}}$, and if we have a complete bipartite graph $K_{\beta t, (1-\beta)t}$ then $\gamma(H) = 2\sqrt{\beta(1-\beta)} + o(1)$. Motivated by these we identify two cases of interest with $\gamma(H) = o(1)$: the first being graphs with fixed (or slow-growing) average degree, and the second case being very unbalanced complete bipartite graphs, i.e. where the left hand class\footnote{In a complete bipartite graph $K_{s,t}$, the size $s$ class will be denoted the left-hand class, and the size $t$ the right hand. Similarly, for a bipartite graph with bipartition $(A,B)$ explicitly given, we denote $A$ as the left-hand class.} has size $o(t)$. 
        
        The first of these cases has been considered by several authors. Reed and Wood \cite{ReedWood, rwcorrig} obtained an upper bound $c(H)\leq 1.9475t\sqrt{\log d}$ for all graphs $H$ with $t$ vertices and average degree $d$ sufficiently large. The following lower bound, true for almost all such graphs, was recently proven by Norin, Reed, Thomason and Wood \cite{NRTW}. They claim this result only for integer $d$, though this is not a necessary limitation of their proof. 
        
    \begin{theorem}[{\cite[Theorem 4]{NRTW}}] 
    \label{T:NRTW}
    Let $\epsilon > 0$. Then there is a $D = D_{\ref{T:NRTW}}(\epsilon)$ such that for all $t>d>D$, and all but at most $2^{-t}$ proportion of graphs $H$ with $t$ vertices and $td$ edges, there is a graph $G$ with $e(G)/|G|\geq (1-\epsilon)\alpha t\sqrt{\log d}$ which does not contain $H$ as a minor.
        
    \end{theorem}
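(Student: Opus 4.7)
The plan is to use the standard probabilistic lower-bound approach: take $G$ to be an Erd\H{o}s--R\'enyi random graph $G(n, p^*)$ for suitable parameters, show the expected number of minor models of a random $H$ in $G$ is at most $2^{-t}$, and conclude via Markov's inequality. Let $p^*$ attain the maximum in Definition~\ref{D:gammadef}, so $\alpha = p^*/(2\sqrt{\log(1/(1-p^*))})$. Take $n = \ceil{(1-\epsilon/2) t \sqrt{\log d / \log(1/(1-p^*))}}$ and $G \sim G(n, p^*)$; then $\mathbb{P}[e(G)/|G| \geq (1-\epsilon)\alpha t\sqrt{\log d}] \geq 1/2$ for $d$ above a threshold in $\epsilon$, so it suffices to exhibit a $G$ in the support of this distribution that is not $\succ H$.

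\textbf{Expected model count.} A minor model of $H$ in $G$ is a tuple $(V_1, \dots, V_t)$ of disjoint subsets of $V(G)$ with $G[V_i]$ connected and with a $G$-edge between $V_i$ and $V_j$ for each $h_i h_j \in E(H)$. Connectedness uses intra-$V_i$ edges and adjacencies use inter-$V_i V_j$ edges, so the two kinds of events are independent in $G(n,p^*)$. Summing over size sequences $(s_i)$ and using Cayley's formula to bound $\mathbb{P}[G[V_i] \text{ connected}] \leq s_i^{s_i-2}(p^*)^{s_i-1}$ gives
\begin{align*}
    \mathbb{E}_G[\#\text{models of } H] \leq \sum_{(s_i)} \frac{n!}{(n-\sum_i s_i)! \prod_i s_i!} \prod_i s_i^{s_i - 2} (p^*)^{s_i - 1} \prod_{h_i h_j \in E(H)} \left(1 - (1-p^*)^{s_i s_j}\right).
\end{align*}
For $H$ uniform with $td$ edges, Maclaurin's inequality for elementary symmetric polynomials yields
\begin{align*}
    \mathbb{E}_H \prod_{h_i h_j \in E(H)} \left(1 - (1-p^*)^{s_i s_j}\right) \leq \exp\left(-\frac{2d}{t-1}\sum_{ij}(1-p^*)^{s_i s_j}\right).
\end{align*}

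\textbf{Optimisation and conclusion.} Restricting to equal sizes $s_i = s$, the adjacency bound becomes $\exp(-td(1-p^*)^{s^2})$, and the logarithm of the per-vertex factor of the whole sum is
\begin{align*}
    \Phi(s) = s\log(np^*/s) + s - d(1-p^*)^{s^2} + O(\log s + \log n).
\end{align*}
The optimal $s$ balancing enumeration against adjacency rarity is $s^* \approx \sqrt{\log d / \log(1/(1-p^*))}$; the choice $n \approx ts^*$ forces $\max_s \Phi(s) \leq -\log 2$, precisely because $\alpha = \max_p p/(2\sqrt{\log(1/(1-p))})$. A convexity argument (showing $\sum_{ij}(1-p^*)^{s_i s_j}$ is minimised at equal $s_i$ given $\sum_i s_i$) establishes that the equal-size case dominates, giving $\mathbb{E}_{G, H}[\#\text{models}] \leq 2^{-t}$. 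Markov then shows that for all but $2^{-t}$ proportion of $H$'s, $\mathbb{P}_G[G \succ H] < 1/2$; combined with the concentration of $e(G)/|G|$, such a $G$ must exist in the support.

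\textbf{Main obstacle.} The hard part is making the optimisation rigorous while handling all $(s_i)$ sequences simultaneously and absorbing the enumeration prefactor coming from $n^t$ possible size sequences into the $\epsilon$-slack; particular care is needed in the regime where $d$ is small relative to $t$, so that $\log t$ is not dwarfed by $\sqrt{\log d}$, and which may call for a modified construction such as taking $G$ to be a disjoint union of copies of a smaller pseudorandom graph rather than a single $G(n,p^*)$.
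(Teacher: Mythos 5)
The proposal's core strategy --- take $G$ random, bound $\mathbb{E}_{G,H}[\#\text{models}]$, and conclude by a double first-moment argument --- is the classical route for complete-minor lower bounds, and it does work when $d$ is polynomially large in $t$. But the statement must hold for all $t>d>D$, including $d$ a fixed large constant with $t\to\infty$, and this is exactly where your approach breaks. The number of size sequences and partitions $(V_1,\dotsc,V_t)$ of $V(G)$ is $\exp\bigl(\Theta(n\log t)\bigr)$ with $n\approx t\sqrt{\log d}$, so its logarithm is $\Theta(t\sqrt{\log d}\,\log t)$. On the other hand, with $\sum_i s_i\leq n$ the typical factor $(1-p^*)^{s_is_j}$ is of order $d^{-1}$, so the rarity you gain from the $td$ edges of a random $H$ is only $\exp(-\Theta(t))$ (with $\Theta$ not improving beyond constants as $t$ grows at fixed $d$). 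Thus the enumeration term $\Theta(t\sqrt{\log d}\log t)$ swamps the $\Theta(t)$ gain whenever $\log t\gg 1$, which is unavoidable. This is not a matter of tightening the Cayley / Maclaurin estimates; the first moment over arbitrary partitions of a single $G(n,p^*)$ is genuinely too large when $d=o(\mathrm{poly}(t))$. You identify this obstacle yourself, but it is a gap and not one the proposed plan closes.

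The paper's lower-bound machinery (Lemma~\ref{L:basegraph} and Theorem~\ref{T:gammalb2}, of which this statement is the $r=1$ special case) resolves the issue by a different construction. It fixes a single pseudorandom graph $G_0$ on only $d$ vertices satisfying a ``many non-adjacent pairs'' property, and takes $G$ to be the \emph{balanced blowup} $G_0(k)$. The decisive point is that adjacency in $G_0(k)$ depends only on the projection to $G_0$, so a minimal model of $H$ corresponds to a $k$-blobbing of $V(G_0)$, and the encoding argument bounds the number of these by $(2d)^{kd}$ --- whose logarithm is $O(t\sqrt{\log d}\,\log d)$, independent of $t$ inside the log, and hence comfortably beaten by the $\exp(-\Omega(d^{\epsilon_1}t))$ rarity bound. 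Your suggested fallback (a disjoint union of small pseudorandom graphs) does not achieve the same effect: in a disjoint union the $V_i$ cannot straddle components, so you either lose the needed density or cannot realise enough adjacency constraints; the blowup is the right ``thickening.'' Two further differences worth noting: the paper deliberately drops connectedness of the $V_i$ (it is proving a stronger statement; keeping Cayley's formula is unnecessary and the spanning-tree bound exceeds $1$ for $s_i$ large anyway), and the paper does not randomise $G$ at all once $G_0$ is fixed --- the only randomness that survives is over $H$, with the $k$-blobbing count handled by a union bound.
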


        They asked if their result could be matched by an upper bound, which was answered affirmatively by Thomason and the author \cite{ThomasonWales}.

        \begin{theorem}[{\cite[Theorem 1.3]{ThomasonWales}}]
        Let $\epsilon > 0$. Then there is a $D>0$ such that for all $t>d>D$, and all graphs $H$ with $t$ vertices and $td$ edges, $c(H)\leq (\alpha + \epsilon)t\sqrt{\log d}$.
        \end{theorem}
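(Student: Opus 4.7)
The plan is to adapt the Myers--Thomason framework for complete bipartite minors to arbitrary $H$ with $t$ vertices and $td$ edges, by constructing a pseudo-random host inside $G$ and then embedding $H$ into it probabilistically. First, using iterated Mader-type reductions, I would extract from $G$ a minor $\gtilde$ with strong pseudo-random structure: concretely, a collection of $n\approx (\alpha+\epsilon/2)t\sqrt{\log d}$ branch sets $X_1,\dots,X_n$ together with an adjacency pattern between them that behaves like a quasi-random graph of suitable density. Crucially, the goal is not merely high minimum degree in $\gtilde$, but also roughly uniform distribution of inter-branch-set edges; this uniformity is what drives the clean probabilistic embedding in the next step.

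Second, I would embed $H$ into $\gtilde$ by a random assignment. Choose a probability $p\in(0,1)$ to be optimised. For each $h\in V(H)$, independently sample a subset $S_h\subseteq[n]$ by including each index with probability $p$; then assign $h$ to a branch set $X_i$ with $i\in S_h$ via a greedy/matching procedure to enforce disjointness. For each edge $hh'\in E(H)$ we succeed if some $X_i$ ($i\in S_h$) is adjacent in $\gtilde$ to some $X_j$ ($j\in S_{h'}$). The pseudo-randomness of $\gtilde$ makes the failure probability for a single edge roughly $(1-p)^{\Theta(n)}$, so the expected number of failing edges remains small provided $n$ is of the claimed size. Optimising over $p$ produces precisely the maximiser in the definition of $\alpha$, so the constant $\alpha = \max_p p/(2\sqrt{\log(1/(1-p))})$ emerges naturally from this optimisation.

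Third, to promote the expectation bound into an existence statement, I would perform local surgery: using the abundance of branch sets not claimed by the embedding, reroute or swap the $o(td)$ failing edges, exploiting the low average degree of $H$ to ensure that no single branch set is overloaded by corrections. A concentration-of-measure argument (together with the sparsity of $H$) should show that with positive probability a bounded number of such local swaps suffice.

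The main obstacle is constructing $\gtilde$ with a sufficiently uniform adjacency pattern. Standard minor extraction gives high minimum degree but not the required uniformity between branch-set pairs, and closing this gap likely requires either a potential-function argument (growing branch sets to maximise an appropriate richness quantity) or a reduction to a large bipartite-like substructure inside $G$ before extracting branch sets. Once $\gtilde$ is secured, the probabilistic embedding and correction steps are comparatively routine applications of the probabilistic method and concentration inequalities.
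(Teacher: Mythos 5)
Your high-level plan --- probabilistic assignment of $H$-vertices to small vertex-subsets of a host, exponential control of per-edge failure, optimisation over density yielding $\alpha$, and a final surgery step using connectivity --- is indeed the right shape. But the step you flag as the main obstacle, namely producing a pseudo-random minor $\gtilde$ with uniform adjacency between branch sets before doing the random assignment, is a genuine gap: you acknowledge you don't know how to do it, and in fact this intermediate object is never constructed in the actual proof (here or in Thomason--Wales). The actual argument sidesteps it entirely. After passing to a minor-minimal element of $\mathcal{E}_{m,k}$ (which yields minimum degree, connectivity, and every edge in many triangles --- Lemma~\ref{L:enkprop}), the random partition is built \emph{directly} in $G$ via the blocked construction of Theorem~\ref{T:mpgl}: order $V(G)$ by degree, split into $l$ consecutive blocks, and choose each part as a union of random $b$-subsets, one per block. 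The degree-ordering plus blocking is precisely what replaces quasi-randomness: the estimate $\mathbb{P}(v \text{ bad for }W)\leq q_v^{lb}$ holds for each vertex separately with its own $q_v$, and the AM--GM step averages these across blocks to recover the global bound $(q/(1-\eta))^{(1-\eta)l(l-1)b_ab_b}$ without assuming any regularity of $G$. So the ``richness potential'' or ``bipartite-like substructure'' you propose to chase is a dead end --- no such structure is required.

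A second structural omission: you give a single argument, but the actual proof needs a dense/sparse dichotomy. Once you have passed to a minor-minimal $G\in\mathcal{E}_{m,k}$, its order can be anywhere from $\Theta(m)$ up to unbounded, and the random-partition argument only gives the right constant when $|G|=O(m)$ (the ``dense case''), since otherwise the density of $G$ is too low to exploit. The sparse case $|G|\gg m$ is handled by Theorem~\ref{T:sparsetheorem}: cover $H$ by bounded pieces $H_i$, find disjoint highly-connected chunks $T_i\subset V(G)$ (built from neighbourhoods using the triangle property), embed each $H_i$ prevalently inside its own $T_i$ at prescribed roots, and route Menger paths between the $T_i$ to glue the models. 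Your proposal has no analogue of this. Finally, a smaller inaccuracy: the per-edge failure probability should scale as $(1-\pi)^{|V_h||V_{h'}|}\approx d^{-1}$ with $|V_h|\approx\sqrt{\log_{1/(1-\pi)}d}$, not $(1-p)^{\Theta(n)}$ with $n\approx\alpha t\sqrt{\log d}$; as written the exponent is off by a factor of $t$, and the correct balance between $n=t|V_h|$ and the failure probability is exactly where the optimisation defining $\alpha$ comes from.
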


        The above results can only be tight for graphs where $\gamma(H) = \sqrt{\frac{\log d}{\log t}} + o(1)$. Myers and Thomason \cite{MyersThomason} showed that this holds for almost all graphs, but that it fails for some interesting classes of graphs such as bipartite graphs with non-equal part sizes.
        
        For complete bipartite graphs, less is known. K\"uhn and Osthus \cite{KuhnOsthus} showed essentially the following result. A version with stronger conditions on $f(t)$ was independently proven by Kostochka and Prince \cite{KP1}, with a more exact bound on the extremal function under certain conditions.
        \begin{theorem}[{\cite[Theorem 2]{KuhnOsthus}}]
          For any function $f(t)$, if $H = K_{\floor{f(t)t/\log t}, t}$ then \\$c(H) = (\frac12+o_{1/f,t}(1))t$ (i.e. for all $\epsilon > 0$, there is a constant $\delta$ such that for all $t>1/\delta$ and any $f <\delta$, $c(H)\leq (\frac12+\epsilon)t$). 
        \end{theorem}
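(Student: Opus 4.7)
The plan proceeds in two stages. First, by the standard Mader-type reduction---iteratively deleting vertices of degree at most $(\tfrac12 + \epsilon)t$---I pass to a subgraph of $G$ with minimum degree $\delta > (\tfrac12+\epsilon)t$; this preserves the edge density $e/|G|$. Relabelling, let $G$ have $n$ vertices with $\delta(G) > (\tfrac12+\epsilon)t$; note $n \geq (1 + 2\epsilon) t + 1$.

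It then suffices to construct $s$ pairwise vertex-disjoint connected subsets $B_1,\dots,B_s \subseteq V(G)$ satisfying $\big|\bigcap_{i} N_G(B_i) \setminus \bigcup_{i} B_i\big| \geq t$, since these will serve as the $s$-side branch sets of a $K_{s,t}$ minor, with any $t$ singletons from the common neighborhood playing the role of the $t$-side branch sets.

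I would build the $B_i$ sequentially: at stage $i$, restrict to $V_i := V \setminus \bigcup_{j<i} B_j$ and grow $B_i \subseteq V_i$ by greedy connected expansion---start from any vertex, and repeatedly add a vertex of $N_G(B_i) \cap V_i$ maximising $|N_G[B_i]|$. The key counting estimate is that, once the uncovered set $U := V_i \setminus N_G[B_i]$ satisfies $|U| \leq \delta(G[V_i])/2$, an edge count using $e(U, N(B_i) \cap V_i) \geq |U|(\delta(G[V_i]) - |U|)$ shows $|U|$ shrinks by a multiplicative factor of $1 - \Theta(\delta/n)$ per step. Hence $k = O((n/t)\log(s/\epsilon))$ vertices suffice to bring $|U|$ below $\epsilon n/(2s)$. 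Summing over $i$, the total branch-set size is $sk = O(fn)$ and the common neighborhood has size at least $n(1 - O(f) - \epsilon/2)$; since $n \geq (1+2\epsilon)t$, this exceeds $t$ provided $f$ is chosen small in terms of $\epsilon$.

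The main obstacle I anticipate is two-fold. First, the initial phase of the greedy expansion, while $|U|$ is close to $|V_i|$, does not obey the favourable per-step estimate above; a separate BFS-type warm-up (or an argument based on expansion) is needed to bring $|U|$ below the threshold $\delta(G[V_i])/2$ without consuming too many vertices. Second, sustaining $\delta(G[V_i]) = \Omega(t)$ throughout all $s$ iterations requires the total $sk = O(fn)$ to remain small compared to $\delta \sim t$, which is automatic when $n = O(t/f)$ but may fail for very large $n$; in that regime one would need either a preparatory step extracting a bounded-order dense nucleus of $G$, or a refined analysis exploiting the extra slack $n - t$ in the common-neighborhood target to use smaller branch sets.
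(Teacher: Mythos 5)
This statement is a quoted result of K\"uhn and Osthus; the paper itself offers no proof, so there is nothing to compare against directly, and I assess your argument on its own merits.

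Your dense-phase calculation is sound as far as it goes: starting from the Mader reduction, the per-step contraction estimate for $|U|$ and the resulting bound $k = O((n/\delta)\log(s/\epsilon))$, hence $sk = O(fn)$, are correct, and the bookkeeping $n(1 - O(f) - \epsilon/2) \geq t$ using $n \geq (1+2\epsilon)t$ (which indeed follows from the preserved density, not from the minimum degree alone) is fine. However the two obstacles you flag are genuine gaps, not cosmetic ones, and neither is resolved. For the warm-up: before $|U|$ drops below $\delta(G[V_i])/2$, the edge-count $e(U, N(B_i)\cap V_i) \geq |U|(\delta - |U|)$ is vacuous, and minimum degree by itself does not guarantee that a connected set can cover most of the graph efficiently (think of a long chain of dense blocks); some expansion or connectivity hypothesis is needed, and you would have to carry it through from the Mader-type lemma (the paper's Lemma~\ref{L:enkprop} gives $\kappa(G) > k$ precisely for such purposes) rather than discarding it. For the large-$n$ case: your total branch-set size $O(fn)$ is compared against $\delta \approx t$, so the argument only closes when $n = O(t/f)$, and there is no way to force $n$ into that range --- a sparse graph of density $\Theta(t)$ on $n \gg t/f$ vertices need not contain a subgraph of order $O(t/f)$ and comparable density, so ``extracting a dense nucleus'' is not available in general. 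This is exactly the regime that requires a separate mechanism (finding many disjoint small dense pieces and linking rooted minors across them, as in the sparse-case Theorem~\ref{T:sparsetheorem} of this paper), and it is the part of the K\"uhn--Osthus argument your proposal is missing. In short: the outline correctly identifies the right object to build (disjoint connected sets with a large common neighbourhood) and the dense-regime estimate is correct, but the proposal as written proves nothing outside $n = O(t/f)$, and even there it relies on an unproved warm-up claim.
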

        Kostochka and Prince \cite{KP2} showed the above result remains true replacing $K_{s,t}$ with  $K^*_{s,t}$. K\"uhn and Osthus \cite{KuhnOsthus} also gave the following lower bound, which shows in particular that no bound of the form $c(K_{ft/\log t, t})\leq Ct$ can hold for all $t$ if $f$ exceeds a (large) constant.
        
        \begin{theorem}[{\cite[Proposition 9]{KuhnOsthus}}]
            For each $\beta > 0$, there is an $n_0$ such that for all $n > n_0$, there is a graph with average degree at least $n/2$, and no $K_{s,t}$ minor, where $t = \ceil{\beta n}$ and $s = \ceil{\frac{2n}{\beta \log n}}$. In particular, we have $c(K_{ft/\log t, t})\geq t\sqrt{f/32} \sim 0.177 t\sqrt{f}$, for $t$ sufficiently large depending on $f$. 
        \end{theorem}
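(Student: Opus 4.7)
The approach I would take is to exhibit the required graph as a random graph $G = G(n,p)$ on $n = \lceil t/\beta\rceil$ vertices with edge probability $p$ marginally above $1/2$, and show that with positive probability $G$ has average degree $\geq n/2$ and contains no $K_{s,t}$ minor. Specialising to $\beta = \sqrt{2/f}$ in the final application identifies $n/4$ with the target density $t\sqrt{f/32}$, yielding the quantitative consequence.

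The average-degree claim is standard concentration: $e(G)$ is binomial with mean $p\binom{n}{2}$, so by a Chernoff bound $e(G) \geq n^2/4$ with probability $1-o(1)$, giving average degree at least $n/2$.

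For the absence of a $K_{s,t}$ minor, suppose for contradiction there is a model with branch sets $A_1,\ldots,A_s$ and $B_1,\ldots,B_t$ of sizes $a_i$ and $b_j$, disjoint with $\sum a_i + \sum b_j \leq n$. For a fixed configuration, the probability that every $(A_i,B_j)$ pair is joined by at least one edge of $G$ equals $\prod_{i,j}(1-(1-p)^{a_i b_j})$. Using the elementary inequality $1-(1-p)^x \leq x\log(1/(1-p))$ together with two applications of AM--GM (to $\prod_i a_i$ and $\prod_j b_j$), this is at most $(c\cdot AB/(st))^{st}$, where $A = \sum a_i$, $B = \sum b_j$ and $c = \log(1/(1-p))$; since $A + B \leq n$ we have $AB \leq n^2/4$, so the bound becomes $(cn^2/(4st))^{st}$.

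To conclude I would union-bound over all branch-set configurations. The count of configurations with a given size profile is a multinomial coefficient $\binom{n}{a_1,\ldots,a_s,b_1,\ldots,b_t,\ast}$, and the choice $\beta = \sqrt{2/f}$ makes $st \approx 2n^2/\log n$, so the per-configuration bound is $\bigl((c/8)\log n\bigr)^{st}$, whose logarithmic base should overwhelm the combinatorial count. The main obstacle is handling this sum cleanly when branch-set sizes can vary widely; I would address this by first normalising to models in which no branch set exceeds size $O(1/\beta) = O(\sqrt{f})$, using that the average branch-set size is at most $n/(s+t) \leq 1/\beta$. After such a reduction, the multinomial count collapses to something of order $\binom{n}{k}^{s+t}$ for $k = O(\sqrt{f})$, which is tractable, and the per-configuration probability is enough to dominate it for $f$ up to any slowly growing function of $n$.
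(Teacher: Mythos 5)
This statement is quoted from K\"uhn--Osthus and is not re-proved in the paper, so the comparison has to be against the paper's general lower-bound machinery (Lemma~\ref{L:basegraph}, Theorem~\ref{T:gammalb2}, and the sketch of Theorem~\ref{T:lowerboundbipartite}). Your overall framework --- a random graph with edge probability near $1/2$, a union bound over branch-set configurations, and the observation $AB \leq n^2/4$ --- is plausibly close to the original K\"uhn--Osthus argument, and it is genuinely different from the paper's technique (the paper constructs a \emph{deterministic} pseudorandom graph $G_0$ on only $d$ vertices and then blows it up, which is what lets the same base graph serve uniformly across all $H$ and yields an ``almost all'' statement rather than a single-graph one). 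However, the central probability estimate in your plan is incorrect, and the error is fatal, not cosmetic.

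Concretely: the linearization $1-(1-p)^x \leq cx$ with $c = \log(1/(1-p))$ is only useful when $cx < 1$, but the typical branch set has size about $n/(s+t) \approx 1/\beta = \sqrt{f/2}$, so the typical exponent $a_ib_j$ is about $f/2$ and $ca_ib_j \gg 1$. After AM--GM you arrive at the per-configuration bound $(cAB/(st))^{st} \leq (cn^2/(4st))^{st}$, and with $st \approx 2n^2/\log n$ the base is $\approx (c/8)\log n$, which \emph{exceeds} $1$ for all large $n$. So the quantity you call a probability bound grows like $\bigl((c/8)\log n\bigr)^{st}$, rather than shrinking; it cannot ``overwhelm the combinatorial count'', and the union bound cannot close. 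The proposed normalization to branch sets of size $O(1/\beta)$ does not rescue this, since that $O(1/\beta)$ is precisely the regime where the linearization is already useless. The correct estimate goes in the other direction at the start: use $\log\bigl(1-(1-p)^{x}\bigr) \leq -(1-p)^{x}$, and then apply the AM--GM inequality (equivalently Jensen for the convex function $q\mapsto q^{x}$) to the quantities $(1-p)^{a_ib_j}$ themselves rather than to the $a_i$ and $b_j$. This gives
\begin{align*}
\prod_{i,j}\bigl(1-(1-p)^{a_ib_j}\bigr) \;\leq\; \exp\!\Bigl(-\sum_{i,j}(1-p)^{a_ib_j}\Bigr) \;\leq\; \exp\!\bigl(-st\,(1-p)^{AB/st}\bigr) \;\leq\; \exp\!\bigl(-st\,(1-p)^{n^2/4st}\bigr),
\end{align*}
and with $st \approx 2n^2/\log n$ the inner exponent $(1-p)^{n^2/4st} \approx n^{-(\log 2)/8}$ is a small polynomial, so the bound is roughly $\exp(-c'n^{2-(\log 2)/8}/\log n)$, which easily dominates the $\exp(O(n\log n))$ count of configurations. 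This is the estimate you need; without it the argument does not work.
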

        \begin{remark}
        In fact, re-analysing their proof and using a more optimal choice of parameters shows a lower bound $(\alpha - o(1))t\sqrt{f(t)}\sim 0.319t\sqrt{f}$ holds.
        \end{remark}

        Having completed these preliminaries, we can now start to state our results.  We start with a pair of asymptotically matching bounds for complete bipartite graphs. The upper bound can be viewed as a special case of the later Theorem~\ref{T:mainweightthm}, though we give a separate and more explicit proof.
    \begin{restatable}{theorem}{mainthmbip}
    \label{T:mainthmbip}
        Let $\epsilon >0$. Then there is a constant $C = C_{\ref{T:mainthmbip}}(\epsilon)$ such that for all $t,f(t)$ satisfying $\log t > f(t) > C$, any graph $G$ satisfying $e(G)/|G|\geq(2\alpha+\epsilon)t\sqrt{f}$ contains $K_{ft/\log t, t}$ as a minor, where $\alpha = 0.319\dotsc$ is the constant from Definition~\ref{D:gammadef}. In particular, $c(K_{ft/\log t, t})\leq (2\alpha +\epsilon)t\sqrt{f}$.
    \end{restatable}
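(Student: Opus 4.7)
The plan is to build an explicit model of $K_{s,t}$ (with $s=ft/\log t$) in $G$, using $t$ singleton branch sets for the larger side and $s$ small connected branch sets for the smaller side, each of the latter adjacent to every singleton.

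First I would pass to a subgraph $G_0\subseteq G$ with minimum degree $\delta\geq(2\alpha+\epsilon/2)t\sqrt{f}$, obtainable from the hypothesis $e(G)/|G|\geq(2\alpha+\epsilon)t\sqrt{f}$ by the classical iterative removal of low-degree vertices; set $n=|V(G_0)|$. I would then choose a uniformly random subset $B\subseteq V(G_0)$ of size exactly $t$, to serve as the $t$ singleton branch sets. For each $v\in V(G_0)\setminus B$, the random variable $|N(v)\cap B|$ is hypergeometrically distributed with mean $d(v)t/n$, and sharp concentration gives a plentiful supply of ``seed'' vertices for which $|N(v)\cap B|\geq pt$, where $p$ is the maximiser appearing in Definition~\ref{D:gammadef}.

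The core construction is to find $s$ pairwise disjoint connected sets $V_1,\dots,V_s\subseteq V(G_0)\setminus B$ with $N(V_i)\supseteq B$; then $V_1,\dots,V_s$ together with the singletons $\{b\}$ for $b\in B$ form the desired model of $K_{s,t}$. I would construct the $V_i$ greedily: each $V_i$ starts at an unused seed vertex (which already covers at least a $p$-fraction of $B$), and then grows by repeatedly attaching a vertex adjacent to $V_i$ that covers a currently-uncovered element of $B$, stopping once $N(V_i)\supseteq B$. A geometric-decay estimate gives $|V_i|=\Theta(\log t/\log(1/(1-p)))$, and connectivity is automatic from the greedy growth. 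Balancing the number of branches $s=ft/\log t$ against per-branch size and seed coverage leads to an optimisation whose extremum $\max_p p/\sqrt{\log(1/(1-p))}=2\alpha$ matches the hypothesised minimum-degree threshold, with the scaling $\sqrt f$ emerging from the fact that each unit of seed density $p$ costs one extension round worth $\log(1/(1-p))$ of branch-set length.

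The principal obstacle is controlling the dependencies across the $s$ greedy iterations: each newly-constructed branch set consumes vertices and ``eats'' edges into $B$, potentially depleting the supply of seeds and extension vertices for later branches. I anticipate handling this either via a martingale-type concentration bound across the $s$ stages, or by a batched pre-selection of a large family of disjoint candidate branch configurations up front, followed by a Hall-/matching-style argument to pick $s$ of them simultaneously. Verifying that the random $B$ is simultaneously ``good'' for all required concentration estimates is a minor technical point, handled by union-bounding over hypergeometric tail bounds.
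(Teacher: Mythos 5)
Your approach diverges fundamentally from the paper's (which partitions the host into $\sim t\sqrt f$ blocks of \emph{unequal} sizes, builds an almost-compatible partition via a random blocked construction, then fixes missing adjacencies using a connectivity lemma), and it contains a fatal scaling error at the heart of the construction.

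The problem is the decision to use $t$ \emph{singleton} branch sets for the larger class. If $B$ consists of $t$ singletons, then each of the $s = ft/\log t$ branch sets on the small side must dominate all of $B$. As you correctly observe, the geometric-decay estimate gives $|V_i| = \Theta(\log t / \log(1/(1-p)))$ — but note this size has \emph{no dependence on $f$}. The total number of vertices your construction consumes is then
\[
  s\cdot|V_i| + |B| \;\approx\; \frac{ft}{\log t}\cdot\frac{\log t}{\log(1/(1-p))} + t \;=\; \frac{ft}{\log(1/(1-p))} + t.
\]
But the hypothesis only gives you average degree (hence, after passing to a min-degree subgraph $G_0$, $|G_0|$ bounded below by) $\Theta(t\sqrt f)$, and to make the seed argument work you also need $G_0$ to have constant density $\geq p$, which forces $|G_0| = O(t\sqrt f)$. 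So you need $ft/\log(1/(1-p))$ vertices out of only $O(t\sqrt f)$ — an overspend by a factor of $\Theta(\sqrt f)$, which is unbounded since $f\to\infty$. The construction simply does not fit into the host graph, even in the extremal (dense, density-$p$) case.

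The missing idea — the crux of the theorem, and what the $\sqrt f$ in the bound is actually buying — is that the $t$-class branch sets should \emph{not} be singletons but should have size $\Theta(\sqrt{f/\log(1/(1-p))})$. Then a small-side branch set only needs to hit each of $t$ \emph{sets} of $\Theta(\sqrt f)$ vertices rather than $t$ individual vertices; the probability one of your $\ell_1$-vertex branch sets misses one of the $\ell_2$-vertex right sets is $(1-p)^{\ell_1\ell_2}$, so you only need $\ell_1\ell_2 \gtrsim \log_{1/(1-p)} t$, allowing $\ell_1 = \Theta(\log t/(\sqrt f\sqrt{\log(1/(1-p))}))$. Both sides then contribute $\Theta(t\sqrt{f/\log(1/(1-p))})$ vertices and the accounting closes, with the $p$-optimisation giving exactly $2\alpha$. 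This asymmetric sizing — branch-set size proportional to a ``weight'' reflecting the part's size — is precisely the content of Theorem~\ref{T:mpgl}/Lemma~\ref{T:bipblock} in the paper.

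There is also a second, independent gap: after passing to a min-degree-$\delta$ subgraph $G_0$, you have no control on $|G_0|$ (it could be huge, with density $o(1)$), and the seed argument requires constant density. The paper handles this by splitting into a dense case (where $|G|$ is $O(m)$ and the above machinery applies) and a sparse case (Theorem~\ref{T:sparsetheorem}), where $H$ is decomposed into pieces found in many small dense subgraphs and stitched together using the graph's connectivity. Your proposal does not address the sparse regime at all. Finally, the ``greedy growth'' also needs some control over connectivity, not just degree — the paper gets this by working with minor-minimal elements of $\emk$ and using the Connector/Projector lemma — but compared to the scaling error this is a lesser issue.
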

        
\begin{restatable}{theorem}{lowerboundbipartite}
\label{T:lowerboundbipartite}
Let $\epsilon>0$. Then there is a constant $T = T_{\ref{T:lowerboundbipartite}}(\epsilon)$ such that for all $T< f(t) \leq \log t$, there is a graph satisfying $e(G)/|G|\geq(2\alpha-\epsilon)t\sqrt{f}$ and no $K_{ft/\log t,t}$ minor. In particular, $c(K_{ft/\log t, t})\geq (2\alpha - \epsilon)t\sqrt{f}$.
\end{restatable}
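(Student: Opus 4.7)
The plan is to take $G$ to be a binomial random graph $G(n, p^\star)$ where $p^\star \in (0,1)$ is the maximizer in Definition~\ref{D:gammadef} (so that $2\alpha = p^\star/\sqrt{\log(1/(1-p^\star))}$), and $n = \floor{(2 - \delta) t\sqrt{f/\log(1/(1-p^\star))}}$ for $\delta = \epsilon/(2\alpha)$. Standard concentration gives $e(G)/|G| \geq (2-\delta)\alpha t\sqrt{f} - o(1) \geq (2\alpha - \epsilon)t\sqrt{f}$ with high probability, so it remains to show $G$ has no $K_{s,t}$ minor with high probability, where $s = \ceil{ft/\log t}$.

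I carry this out by a union bound over all potential models. Fix branch set sizes $\ell_1, \ldots, \ell_s$ on the left and $r_1, \ldots, r_t$ on the right, with sums $L = \sum \ell_i$ and $R = \sum r_j$ satisfying $L + R \leq n$. The expected number of models with these specific sizes is at most a combinatorial factor bounded by $\exp(O(t\sqrt{f}\log t))$, arising from selecting and partitioning vertices together with the spanning-tree bound $\ell^{\ell - 2}p^{\ell-1}$ for connectivity, multiplied by the probability that all $st$ prescribed adjacencies are present. For this latter factor, using $\log(1-x)\leq -x$ and Jensen's inequality applied to the convex map $x\mapsto (1-p^\star)^x$ (together with $\sum_{i,j}\ell_i r_j = LR$) yields
\begin{align*}
\prod_{i,j}(1 - (1-p^\star)^{\ell_i r_j}) \leq \exp\!\left(-st(1-p^\star)^{LR/(st)}\right).
\end{align*}
Now AM-GM gives $LR \leq (L+R)^2/4 \leq n^2/4 = (2-\delta)^2 t^2 f/(4\log(1/(1-p^\star)))$, so $LR/(st) \leq (2-\delta)^2 \log t/(4\log(1/(1-p^\star)))$, whence $(1-p^\star)^{LR/(st)} \geq t^{-(2-\delta)^2/4}$. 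Since $(2-\delta)^2/4 = 1 - \delta(1 - \delta/4) < 1$, this gives adjacency probability at most $\exp(-ft^{1+\delta(1-\delta/4)}/\log t)$, which decays super-polynomially in $t$. This swallows the combinatorial factor as well as the bound $n^{s+t} = \exp(O(t\log t))$ on the number of size configurations one must sum over, so the total expected number of models is $o(1)$ and the conclusion follows by Markov.

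The main obstacle is organizing the combinatorial bookkeeping uniformly over all branch set size configurations, so as to reduce to the balanced case; this is handled cleanly by the Jensen and AM-GM combination above, which shows the extremal situation is $L = R = n/2$ with all $\ell_i r_j$ equal to $LR/(st)$. The sharp constant $2\alpha$ emerges from matching this maximum $n^2/4$ against the critical threshold $st\cdot \log t/\log(1/(1-p^\star))$ beyond which adjacency probabilities become vanishing, rearranging to $n = 2t\sqrt{f/\log(1/(1-p^\star))}$, i.e.\ edge density $2\alpha t\sqrt{f}$.
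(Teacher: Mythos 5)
Your proof is correct and achieves the sharp constant $2\alpha$, but it takes a genuinely different route from the paper's. You run a first-moment computation directly in $G(n,p^\star)$: fix branch-set sizes, bound the connectivity cost by spanning trees (Cayley plus Stirling giving the $e^{L+R}$ factor), and control the bipartite adjacency term by $\log(1-x)\leq -x$, Jensen applied to the convex map $x\mapsto(1-p^\star)^x$ with $\sum_{i,j}\ell_ir_j=LR$, and AM--GM to get $LR\leq n^2/4$. The paper instead constructs a small pseudorandom base graph $G_0$ on $d=ft/\log t$ vertices (Lemma~\ref{L:basegraph}), whose defining property is that any large collection of disjoint small subsets contains a positive proportion of non-adjacent pairs, and then takes its balanced blowup $G_0(k)$; a hypothetical $K_{s,t}$ model projects to a $k$-blobbing in $G_0$, Markov's inequality yields many left- and right-sets of bounded size, and the pseudorandom property of $G_0$ produces a non-adjacent pair, contradicting the model. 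The crucial insight giving $2\alpha$ rather than $\alpha$ --- that the adversary's optimal allocation puts half the vertex budget in each side of the model, so the left branch sets (roughly $n/2s$ vertices each) are much larger than the right ones (roughly $n/2t$ each) --- appears in your proof as the AM--GM step $LR\leq(L+R)^2/4$ combined with the Jensen equalization, and in the paper as the two size parameters $b_1,b_2$ for the two classes. Your argument is shorter and self-contained for this bipartite target; the paper's blowup-over-pseudorandom-base structure is what allows the \emph{same} construction to also prove Theorem~\ref{T:gammalb2}, where one must rule out minors of \emph{almost all} graphs in a structured family rather than a single graph, and where a direct first moment over models of one graph would not provide the necessary leverage. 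One small point to tighten in a full write-up: the threshold $T(\epsilon)$ must be chosen so that for all $t$ with $\log t>T$ the exponent gap $\sqrt{f}\,t^{\delta(1-\delta/4)}\gg(\log t)^2$ closes the union bound; this is automatic since $\delta$ depends only on $\epsilon$ and $\log t>T$ forces $t>e^T$, but it is worth saying explicitly.
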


    We now move to our theorems for structured sparse graphs. The following definition was motivated by the parameter $\gamma$ (and links between these are explored in the next section). This is then used to define the class of graphs for which we have matching bounds on $c(H)$. We let $\mathbb{R}_{\geq 0}$ denote the set of non-negative real numbers, and let $[-\infty,1]$ denote all real numbers at most 1, together with a formal symbol $-\infty$ "smaller than all real numbers".
    
         \begin{dfn}
   \label{D:vectorgamma}
    Suppose that $\parts = (\sigma_1,\dotsc,\sigma_r)\in \mathbb{R}_{\geq 0}^r$ is a weight vector (i.e. $\sum \sigma_i = 1$), and $\admat = (D_{ij})_{i=1}^r$ is a symmetric matrix, where $D_{ij}\in [-\infty,1]$. Define $\gamma(\parts,\admat) = \min_{\beta\in \mathbb{R}_{\geq 0}^r} \{\parts\cdot\beta: \beta_i\beta_j \geq D_{ij}\textrm{ for } 1\leq i,j\leq r\}$, where $\cdot$ denotes the usual inner product.
    \end{dfn}
        
        \begin{dfn}
\label{D:vectorclass}
    Let $\parts = (\sigma_1,\dotsc,\sigma_r)$ and $\admat$ be as in Definition~\ref{D:vectorgamma}. Define the class of graphs  $\mathcal{D}(\parts,\admat)(t,d)$ to be all graphs $G$ with $V(G) = \{1,\dotsc,t\}$
       and the following property.
       
       Let $s_i = \floor{\sigma_i t}$ for $i\leq i_0$, and $\ceil{\sigma_i t}$ for $i>i_0$, where $i_0$ is chosen so that $\sum s_i = t$.
       Let $S_1 = \{1,\dotsc,s_1\}$, $S_2 = \{s_1+1,\dotsc,s_1+s_2\}$ and so on. Then $G$ has $\floor{td^{D_{ij}}}$ edges between $S_i$ and $S_j$. 
       
       Let $\mathcal{D}(\parts,\admat)$ be the union of all classes $\mathcal{D}(\parts,\admat)(t,d)$.
    \end{dfn}
        
        The formal symbol $-\infty$, with the property $d^{-\infty} = 0$ for any $d>1$, is permitted to allow no edges between some pairs in the definition of $\mathcal{D}(\parts,\admat)$. We note that since $\beta$ is a non-negative vector, we can increase any negative $D_{ij}$ to $0$ without affecting the value of $\gamma(\parts,\admat)$.
        
        One might be tempted to avoid this technicality by relaxing the definition of the graph class to allow at most $td^{D_{ij}}$ edges between $S_i$ and $S_j$. However, our lower bounds are of the form `almost all', and so when we allow both graphs with $t$ edges and graphs with no edges, the latter are entirely `hidden from view' by the former, since there are many more of them.  Our restrictive definition means that for any choice of number of edges, we can pick a corresponding $\parts$ and $\admat$ (for each $t,d$), and therefore our theorem is as applicable as possible.
        
        We were able to prove the following matching upper and lower bounds for this setting; the upper bound being a slightly special case and reformulation of a general Theorem~\ref{T:mainweightthm}.
        
        \begin{theorem}
        \label{T:gammaub}
        Let $\epsilon>0$, and $r\geq 1$ an integer. Then there is a constant $D = D_{\ref{T:gammaub}}(\epsilon,r)$ such that the following holds. Let $\parts$ be a weight vector of length $r$, and $\admat$ a matrix with entries in $[-\infty,1]$ satisfying $\gamma(\parts,\admat)>\epsilon$. For all $t>d>D$, every element $H\in \mathcal{D}(\parts,\admat)(t,d)$ has $c(H)\leq (\alpha+\epsilon)\gamma(\parts,\admat)t\sqrt{\log d}$.
        \end{theorem}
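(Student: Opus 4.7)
The strategy is to adapt the random-partition argument behind Theorem~1.3 of \cite{ThomasonWales}, but with branch-set sizes weighted by an optimiser of $\gamma(\parts,\admat)$. Fix $\beta\in\mathbb{R}_{\geq 0}^r$ achieving the minimum in Definition~\ref{D:vectorgamma}, so that $\beta_i\beta_j\geq D_{ij}$ for all $i,j$ and $\parts\cdot\beta = \gamma(\parts,\admat) =: \gamma$. Let $p^{\ast}$ be the value attaining the maximum in Definition~\ref{D:gammadef}, and set $K := 1/\sqrt{\log(1/(1-p^{\ast}))}$, so that $p^{\ast}K = 2\alpha$. The plan is to assign to each vertex $v\in S_i$ a branch set $V_v\subseteq V(G)$ of size $s_i := \lceil K\beta_i\sqrt{\log d}\rceil$, using a total of $n\approx Kt\gamma\sqrt{\log d}$ vertices.

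The first step would be to pass to a minor $M$ of $G$ on $n$ vertices with edge density close to $p^{\ast}$, using Mader-type degree-boosting together with the robust-minor extraction of \cite{ThomasonWales,NRTW}; it is essential that the density be (nearly) uniform across $V(M)$, so that the probability estimates below apply to every sufficiently large pair of subsets simultaneously. I would then sample a random equitable partition of $V(M)$ into branch sets $\{V_v\}_{v\in V(H)}$ of the prescribed sizes. For each edge $uv$ of $H$ with $u\in S_i$ and $v\in S_j$, the probability that no edge of $M$ joins $V_u$ to $V_v$ is bounded above by
\begin{align*}
(1-p^{\ast})^{s_is_j} \leq \exp\!\bigl(-K^2\beta_i\beta_j(\log d)\log(1/(1-p^{\ast}))\bigr) = d^{-\beta_i\beta_j}\leq d^{-D_{ij}},
\end{align*}
using $K^2\log(1/(1-p^{\ast}))=1$ and $\beta_i\beta_j\geq D_{ij}$. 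Since $H$ has only $\lfloor td^{D_{ij}}\rfloor$ edges between $S_i$ and $S_j$, summing over the $O(r^2)$ pairs yields $O_r(t) = o(n)$ expected missing edges in total, which can be fixed (together with branch-set connectivity) by reserving a sub-linear pool of vertices of $M$ for corrections; the slack between $(\alpha+\epsilon)\gamma$ in the hypothesis and $\alpha\gamma = p^{\ast}K\gamma/2$ absorbs the losses from both the minor extraction and the correction step.

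The hard part, as in \cite{ThomasonWales}, will be the dense-minor reduction: obtaining the constant $\alpha$ rather than something larger requires the density of $M$ to be close to the \emph{optimal} $p^{\ast}$ and to be uniform enough that the bound above applies simultaneously to every edge of $H$, which is why a direct Chernoff bound on $G$ is insufficient. A secondary difficulty is classes with $\beta_i=0$ (permitted when $D_{ij}=-\infty$ for all $j$): the branch-set sizes then collapse and the probabilistic argument yields no content, but since such classes contribute no required edges this case can be folded into the correction step.
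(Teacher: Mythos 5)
Your proposal identifies the right \emph{scaling} — branch sets of size $\approx K\beta_i\sqrt{\log d}$ with $\beta$ an optimiser of $\gamma(\parts,\admat)$ and $p^*K=2\alpha$ — and it correctly observes that a random partition of a density-$p^*$ host into these pieces ``ought'' to miss only $O_r(t)$ edges. But the central estimate, $\mathbb{P}(\text{no edge between }V_u,V_v)\leq(1-p^*)^{s_is_j}$, is \emph{false} for a general graph $M$ of density $p^*$. For a fixed vertex $v$ of $M$ and a uniformly random $s_i$-set $W$ disjoint from $v$, one does get $\mathbb{P}(v\text{ has no neighbour in }W)\leq q_v^{s_i}$ where $q_v$ is the normalised non-degree of $v$; but after conditioning on $W$, the number of vertices bad for $W$ can be \emph{much} larger than its mean $q^{s_i}n$ when degrees are uneven — e.g.\ if half the vertices of $M$ have $q_v=0$ and half have $q_v=2q$, then $\mathbb{E}\lvert\{v:\text{bad for }W\}\rvert = \sum_v q_v^{s_i} = \tfrac12 n(2q)^{s_i}=2^{s_i-1}\,n q^{s_i}$, which is exponentially larger. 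So the expected number of non-adjacent $(V_u,V_v)$ pairs is not bounded by $n^2 q^{s_is_j}$ for the uniform random partition, and the argument collapses exactly in the regime where the bound has to be tight to recover the constant $\alpha$. You flag this (``requires the density of $M$ to be \dots uniform enough'') but then gesture at ``Mader-type degree-boosting together with the robust-minor extraction of \cite{ThomasonWales,NRTW}'': neither of those gives a pseudorandom minor, only a dense and well-connected one, so the gap is real and not a citation away.

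The paper's proof route avoids this entirely. First, Theorem~\ref{T:gammaub} is not proved directly but deduced from Theorem~\ref{T:mainthmgamma} by noting that an optimiser $\beta$ for $\gamma(\parts,\admat)$ gives, after a $(1+o(1))$ rescaling, a weighted partition of $H$ satisfying the Gamma inequality (\ref{eq:gammainequalitya}), hence $\gamma_r(H)\leq(1+o(1))\gamma(\parts,\admat)$. Second, and more importantly, the random-partition lemma that actually replaces your calculation (Theorem~\ref{T:mpgl}) does \emph{not} assume the host is pseudorandom: it orders the vertices of $G$ by degree, partitions them into $l$ blocks, and forces each random branch set to take exactly $b_i$ vertices from each block. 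This turns the conditional non-adjacency probability into a product $\prod_{i\in M(W)} q_{ix}$ over a set of block-boundary non-degrees, which AM-GM then bounds by $((l/m)q)^{m}$ — i.e.\ the \emph{average} non-degree — regardless of the degree distribution. This block/AM-GM mechanism is exactly the ingredient missing from your sketch. On top of that, the reduction from ``$e(G)/\lvert G\rvert$ large'' to ``$G$ small, dense and highly connected'' is done through minor-minimality in $\mathcal{E}_{m,k}$ (Lemma~\ref{L:enkprop}), together with a separate sparse-case argument (Theorem~\ref{T:sparsetheorem}) that glues $H$-pieces found in disjoint dense patches; without this dichotomy, there is no minor $M$ for your random partition to live in. Your ``sub-linear correction pool'' is the connector/projector (Theorem~\ref{T:connectorprojector}), and your observation that classes with $\beta_i=0$ can be folded in is fine, but those are the easy parts; the substance you would need to supply is precisely the blocked partition lemma and the minor-minimal reduction.
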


            \begin{restatable}{theorem}{gammalb}
    \label{T:extremalfunction}
        Let $\epsilon > 0$. Then there is a $D = D_{\ref{T:extremalfunction}}(\epsilon)$ such that the following holds for all $t>d>D$.
        
    Let $\parts$ be a weight vector with all $\sigma_i > \epsilon$, and $\admat$ a matrix with entries in $[-\infty,1]$ such that $\gamma(\parts,\admat)\geq \epsilon$. Then all but at most $2^{-t}$ proportion of elements $H\in \mathcal{D}(\parts,\admat)(t,d)$ have $c(H)\geq (\alpha-\epsilon) \gamma(\parts,\admat) t\sqrt{\log d}$, where $\alpha = 0.319\dotsc$ is the constant from Definition~\ref{D:gammadef}.
    \end{restatable}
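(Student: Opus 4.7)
The plan is to adapt the probabilistic lower bound of Norin, Reed, Thomason and Wood~\cite{NRTW} to the weighted setting. Let $p^{\ast}$ be the optimiser in Definition~\ref{D:gammadef} and set $\rho = \log(1/(1-p^{\ast}))$, so that $\alpha = p^{\ast}/(2\sqrt{\rho})$. Take $n$ slightly above $2(\alpha - \epsilon/2)\gamma(\parts,\admat)t\sqrt{\log d}/p^{\ast}$ and let $G = G(n, p^{\ast})$; with overwhelming probability $e(G)/|G| \geq (\alpha - \epsilon)\gamma(\parts,\admat)t\sqrt{\log d}$. It suffices to show that the joint expectation $\mathbb{E}_G\mathbb{E}_H\mathbf{1}[G \succ H]$, with $H$ uniform over $\mathcal{D}(\parts,\admat)(t,d)$, is at most $2^{-2t}$; Markov then delivers a single $G$ witnessing the lower bound for all but a $2^{-t}$ proportion of the $H$'s.

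To bound this expectation I would expand over models: a model is an ordered tuple $(V_h)_{h=1}^{t}$ of pairwise disjoint connected branch-sets in $V(G)$, with the assignment of $h$ to a block $S_i$ fixed by the definition of $\mathcal{D}(\parts,\admat)$. Given a size profile $(m_h) = (|V_h|)$, set $\mu_i = (\sum_{h \in S_i} m_h)/s_i$ and $\beta_i = \mu_i\sqrt{\rho/\log d}$. The constraint $\sum m_h \leq n$ then forces $\sum_i \sigma_i\beta_i \leq (\alpha - \epsilon/2)\gamma(\parts,\admat)/\alpha$, strictly below $\gamma(\parts,\admat)$. Combined with $\sigma_i > \epsilon$ and a compactness argument on the feasible region of Definition~\ref{D:vectorgamma}, this forces some pair $(i,j)$ to satisfy $D_{ij} - \beta_i\beta_j \geq \delta$ for some $\delta = \delta(\epsilon, r) > 0$.

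The conditional probability that a uniform $H$ places its $\lfloor td^{D_{ij}}\rfloor$ cross-block edges among the $E_{ij}$ adjacent pairs between $S_i$ and $S_j$ is $\prod_{i\leq j}\binom{E_{ij}}{\lfloor td^{D_{ij}}\rfloor}/\binom{s_is_j}{\lfloor td^{D_{ij}}\rfloor}$. Applying Maclaurin's inequality to $\mathbb{E}_G\binom{E_{ij}}{k}$ together with the concavity of $x \mapsto 1 - e^{-\rho x}$, the $\mathbb{E}_G$ of this product is at most $\prod_{i\leq j}(1 - d^{-\beta_i\beta_j})^{\lfloor td^{D_{ij}}\rfloor}$, and then $(1-x)^K \leq e^{-xK}$ bounds the offending block by $\exp(-td^{D_{ij} - \beta_i\beta_j}) \leq \exp(-td^{\delta})$. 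For $d$ large in terms of $\epsilon$ and $r$ this dominates the polynomial-in-$n$ factors coming from (a) the ordered choice of branch-set contents, (b) the classical $m^{m-2}(p^{\ast})^{m-1}$ bound on the connectedness of each branch-set, and (c) the choice of size profile after discretisation.

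The main obstacle is (c): the number of compositions of $n$ into $t$ parts, $\binom{n+t-1}{t-1}$, is too large for a direct union bound. I would address this by bucketing, grouping branch-sets according to $\lfloor \log_d m_h \rfloor$, treating sets of size $O_{\epsilon}(1)$ separately (their contribution to $\beta_i$ is negligible), and union-bounding over the $(\log n)^{O(r)}$ bucket profiles. Within a single bucket the residual variation in exact sizes perturbs $\beta_i$ by only a factor absorbable into $\delta/2$, so the strict inequality and hence the super-polynomial saving survive the discretisation. The connectedness bookkeeping and the translation between models and explicit $H$'s follow the pattern of~\cite{NRTW, ThomasonWales} and should transfer essentially verbatim.
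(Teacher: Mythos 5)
The overall strategy here is the right one --- take a random (or pseudorandom) graph with density near $p^{\ast}$ and order roughly $\gamma(\parts,\admat)t\sqrt{\log_{1/1-p^{\ast}}d}$, observe that any putative model must have branch-set size averages $\beta_i$ summing (against $\sigma_i$) to less than $\gamma(\parts,\admat)$, deduce via Definition~\ref{D:vectorgamma} that some pair $(i,j)$ has $D_{ij}-\beta_i\beta_j$ bounded below, and exploit the fact that a random $H$ is then overwhelmingly unlikely to place all its $\lfloor td^{D_{ij}}\rfloor$ edges into the adjacent pairs. The structural step and the Maclaurin/concavity bound $\mathbb{E}_G\bigl[\binom{E_{ij}}{k}/\binom{s_is_j}{k}\bigr]\leq(1-d^{-\beta_i\beta_j})^k$ are both correct, and the factorisation of connectedness from adjacency (disjoint edge sets) is valid. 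But there is a genuine gap at the enumeration step, and the paper uses a different construction specifically to avoid it.

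You take $G=G(n,p^{\ast})$ with $n=\Theta(t\sqrt{\log d})$ and union-bound over all ordered partitions of $V(G)$ into branch sets. You classify the cost of choosing branch-set contents as ``polynomial in $n$''; it is not. For a fixed size profile the number of placements is $\prod_h\binom{n}{m_h}$, and even after multiplying by the tree-count connectedness penalty $\prod_h m_h^{m_h-2}(p^{\ast})^{m_h-1}$ the total is of order $(enp^{\ast})^{n}$, i.e.\ $\exp\bigl(\Theta(t\sqrt{\log d}\,\log t)\bigr)$ since $np^{\ast}=\Theta(t\sqrt{\log d})$. Your saving from the bad pair is $\exp(-\Theta(td^{\delta}))$ with $\delta=\delta(\epsilon,r)$ fixed. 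For these to balance you need $\sqrt{\log d}\,\log t\lesssim d^{\delta}$, which bounds $t$ by $\exp(d^{\delta}/\sqrt{\log d})$; the theorem asserts the result for all $t>d>D$ with no upper bound on $t$. Bucketing the size profile only removes the $\binom{n+t-1}{t-1}$ factor, which was already subdominant (it is $\exp(O(t\log\log d))$); it does nothing for the content enumeration.

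The paper avoids this by never letting the ambient graph be ``new'' at every scale: $G$ is taken to be a balanced blowup $G_0(k)$ of a single $d$-vertex pseudorandom graph $G_0$ supplied by Lemma~\ref{L:basegraph}, with $k\approx\gamma t\sqrt{\log_{1/1-p}d}/d$. A minimal model in $G_0(k)$ projects to a $k$-blobbing in $G_0$, and the encoding argument bounds the number of $k$-blobbings by $(2d)^{kd}=(2d)^{|G|}$. This replaces your $\log n=\Theta(\log t)$ in the exponent by $\log(2d)$, which is independent of $t$; the union bound becomes $\exp(\Theta(t(\log d)^{3/2}))$, which is dominated by $\exp(-\Theta(td^{\epsilon_1}))$ for large $d$ uniformly in $t$. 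The blowup is not an optimisation --- it is what makes the theorem true in the stated range. If you want to salvage the direct-$G(n,p^{\ast})$ approach, you would need to show that the model count can be compressed to depend on $d$ rather than $n$, which is essentially re-deriving the blowup structure.
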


    \section{The parameters $\gamma$ and $\gamma_r$}
    
    In \cite{MyersThomason}, Myers and Thomason introduced the parameter $\gamma(H)$, defined as follows.
    
    \begin{dfn}
    \label{D:gamma}
    Let $|H| = t$. The parameter $\gamma(H)$ is the minimum of $\frac{1}{t}\sum_v w(v)$ over all functions $w: V(H)\rightarrow \mathbb{R}_{\geq 0}$ satisfying $\sum_{E(G)}t^{-w(u)w(v)}\leq t$.
    \end{dfn}
    
     Unfortunately, while the definition of $\gamma$ was helpful for showing extremal graphs are pseudorandom, it is in general quite difficult to calculate exactly. One weight function which satisfies the inequality of Definition~\ref{D:gamma} when $H$ has $td$ edges is $w(v) \equiv \sqrt{\frac{\log d}{\log t}}$, and so this quantity is an upper bound on $\gamma(H)$. This upper bound is tight for almost all graphs, see \cite{MyersThomason}. One can view $\gamma(\parts,\admat)$ as a restriction of this parameter. It can also be thought of as a generalisation of the notion of `shapes' from that paper to allow for weighted edges.  Our aim in this section is to convert the bound from Theorem~\ref{T:gammaub} to a bound in terms of an explicit graph parameter similar to $\gamma(H)$.

    \begin{remark}\label{R:degreevstwice}It is sometimes taken that $H$ has average degree $d$, instead of having $td$ edges (and hence average degree $2d$). This does not make a significant difference to any of our results, since $d$ only appears inside logarithms in bounds, and \\$\log 2d \leq 1 + \log d$  - i.e. this only affects bounds by a multiplicative $(1+o(1))$ factor which is easily absorbed in error terms.\end{remark}
    
    Due to technical issues requiring rounding, the methods of Myers and Thomason do not apply in the case $\gamma(H) = o(1)$, and in particular to the case $\log d = o(\log t)$. 
    The following definition restricts to a fixed number of different weights to avoid this issue. We also incorporate the $\sqrt{\frac{\log d}{\log t}}$ factor into our new definition; our parameter can then attain values between 0 and 1 for all $t,d$. 
    
    \begin{dfn}
    A \textsl{weighted partition} of $H$ (into $r$ parts) is a collection $(P_i,w_i)_{i=1}^r$ where the $P_i$ form a partition of $V(H)$, and the $w_i$ are non-negative real numbers (called weights).
    
    Suppose that $|H| = t$ and $e(H) = td$. Given a weighted partition of $H$, for $1\leq i,j \leq n$ define quantities $D_{ij}\in [-\infty,1]$ by $e_H(P_i,P_j) = td^{D_{ij}}$. Such a weighted partition satisfies the \textsl{Gamma inequality} if the following holds.
    \begin{align}
            \label{eq:gammainequalitya}\sum_{i\leq j} d^{D_{ij}-w_iw_j}\leq 1
    \end{align}
    Define $\gamma_r(H)$ to be the minimum of $\frac1t\sum_i w_i |P_i|$ over all weighted partitions of $H$ into $r$ parts satisfying the Gamma inequality (\ref{eq:gammainequalitya}).
    \end{dfn}

  %  This is similar to Definition~\ref{D:gamma}, except now only $r$ distinct weights are allowed. If we used at most $r$ distinct weights $(w_i)$, and let $P_i$ be the set of vertices given weight $w_i$ we see a natural correspondence between them (although we gain a multiplicative factor $\sqrt{\frac{\log t}{\log d}}$ since the base of the exponent has changed). 
    
    \begin{remark}
    \label{R:gammaineqconstant}
        The constant on the right-hand side of inequality \ref{eq:gammainequalitya} is effectively arbitrary. Suppose that a weighting $w$ satisfies the inequality, but with 1 replaced with a constant $C>1$. Then increasing each $w_i$ to $(1+\sqrt{\log_d C})w_i$, this new weighting satisfies inequality (\ref{eq:gammainequalitya}) as stated above, and so as in Remark~\ref{R:degreevstwice} $\gamma_r$ only changes by a $(1+o_d(1))$ factor. The choice 1 was made for simplicity and to naturally relate to $\gamma(H)$.
    \end{remark}
    
    For a weighted partition to satisfy the Gamma inequality (\ref{eq:gammainequality}), it is necessary that $w_iw_j \geq D_{ij}$ for all $i,j$. Further, if we have a weighting satisfying all of these inequalities, it satisfies a version of the Gamma inequality with the quantity 1 on the right hand side replaced with $\binom{r+1}{2}$. For sufficiently slow-growing $r = d^{o(1)}$, this is equivalent by Remark~\ref{R:gammaineqconstant}.

    Having defined and somewhat explained this parameter, we are now able to state our upper bound in terms of it.  In the following subsection, we will explain how to derive Theorem~\ref{T:gammaub} from this.

    \begin{restatable}{theorem}{mainthmgamma}
    \label{T:mainthmgamma}
    Let $\epsilon,r > 0$. Then there is a constant $C = C_{\ref{T:mainthmgamma}}(\epsilon,r)$ such that if $H$ is a graph with $t$ vertices, average degree $d>C$ and $\gamma_r(H)\sqrt{\log d} \geq C$, then any graph $G$ with $e(G)/|G|$ at least $(\alpha+\epsilon)\gamma_r(H)t\sqrt{\log d}$ contains $H$ as a minor. 
    \end{restatable}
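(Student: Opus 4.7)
The plan is to follow the probabilistic framework of \cite{ThomasonWales}, replacing the uniform branch-set sizes in that argument with sizes determined by the optimal weighted partition of $H$. Fix a weighted partition $(P_i, w_i)_{i=1}^{r}$ realising $\gamma_r(H)$ and satisfying the Gamma inequality $\sum_{i\leq j} d^{D_{ij}-w_iw_j}\leq 1$, where $D_{ij}$ is defined by $e_H(P_i,P_j)=td^{D_{ij}}$.

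First I would reduce $G$ to a sufficiently pseudorandom subgraph via the standard Mader-type bootstrapping of minimum degree, followed by a pseudorandomisation step. Set a scaling parameter $\kappa=\Theta(\sqrt{\log d})$ and an effective edge density $p$ with $p\kappa^{2}\approx\log d$. For each $v\in P_i$, the branch set assigned to $v$ should have target size close to $\kappa w_i$; the total branch-set volume is then $\kappa\sum_{i}w_{i}|P_i|=\kappa\gamma_r(H)t$, which fits into $|G|$ precisely at the density threshold $e(G)/|G|\geq(\alpha+\epsilon)\gamma_r(H)t\sqrt{\log d}$ once $\kappa$ is optimised against the density $p$ of the pseudorandom subgraph (the constant $\alpha$ arising from this optimisation in exactly the same way as in Definition~\ref{D:gammadef}).

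For each edge $uv\in E(H)$ with $u\in P_i, v\in P_j$, the probability in the pseudorandom graph that the branch sets assigned to $u$ and $v$ contain no edge is approximately $(1-p)^{w_iw_j\kappa^{2}}\approx d^{-w_iw_j}$. Summing the expected missing edges gives
\begin{align*}
\sum_{i\leq j}e_H(P_i,P_j)\, d^{-w_iw_j}=t\sum_{i\leq j}d^{D_{ij}-w_iw_j}\leq t
\end{align*}
by the Gamma inequality. The alteration or rerandomisation step of \cite{ThomasonWales} then converts this $O(t)$ expected failure into a successful embedding, exploiting the $t$ vertices' worth of flexibility in the underlying random process.

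The main obstacle is precisely this factor-of-$t$ gap: the Gamma inequality bounds the relevant sum by $1$ rather than $1/t$, so a naive union bound does not close. Overcoming it requires the same careful counting-with-redundancy (effectively, running many correlated embeddings in parallel and patching) used in \cite{ThomasonWales}, adapted here to non-uniform branch-set sizes; this is where the bulk of the technical work will lie. Secondary issues --- ensuring branch-sets are connected, accommodating parts $P_i$ with $w_i=0$, and carrying a bit of slack to absorb rounding of the $\kappa w_i$ to integers --- should be routine extensions of the earlier framework, with the hypothesis $\gamma_r(H)\sqrt{\log d}\geq C$ providing exactly the margin needed for those error terms to be absorbed into $\epsilon$.
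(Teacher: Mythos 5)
Your dense-case outline is essentially the paper's: after fixing a weighted partition of $H$ achieving $\gamma_r(H)$, one constructs a random partition of $V(G)$ into parts whose sizes scale with the $w_i$, computes that the expected number of $H$-edges whose branch sets are non-adjacent is $O(t)$ via the Gamma inequality, and then repairs those $O(t)$ bad pairs using connectivity. The paper does exactly this through Theorem~\ref{T:mpgl} (blocked random partition), Theorem~\ref{t:weightingpartition} (almost-compatible partition with $O(r^6 t/\epsilon)$ bad edges), and Theorem~\ref{T:connectorprojector} (a reserved ``connector--projector'' set $CP$ of size $O(\eta|G|)$ that can patch up to $|H|/\eta$ bad pairs and make the branch sets connected). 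Your phrase ``running many correlated embeddings in parallel'' is not quite the mechanism --- the repair is a single reserved-vertices-plus-Menger argument, not a parallel-embeddings argument --- but this is cosmetic, since you correctly point at the ThomasonWales machinery.

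The genuine gap is that your proposal never addresses the sparse regime. You write ``reduce $G$ to a sufficiently pseudorandom subgraph via the standard Mader-type bootstrapping,'' but after passing to a minor-minimal element of $\mathcal{E}_{m,k}$ (Lemma~\ref{L:enkprop}), all one has is $\delta(G)>m$, $\kappa(G)>k = \epsilon m$, and the triangle property. If $|G| \gg m$, then $\kappa(G)$ is a vanishing fraction of $|G|$, so the random-partition argument and the connector--projector theorem (which need connectivity a positive fraction of $|G|$) simply do not apply to $G$ directly, and there is no dense pseudorandom subgraph to pass to in one step. This is why the paper runs a dichotomy: when $|G| \leq Dm$ the above dense machinery applies, and when $|G| > Dm$ one must instead invoke Theorem~\ref{T:sparsetheorem}. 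That theorem requires covering $H$ by pieces $H_i$ (in the proof of Theorem~\ref{T:mainweightthm}, these are the graphs $H_{a,b}$ on pairs of weight-balanced bags plus the graphs $H_{0,i}$ covering the high-weight exceptional set $B$), showing each piece is minor-prevalent in any small graph with positive-fraction connectivity, using the triangle condition to extract many disjoint small dense subgraphs of $G$, embedding the $H_i$ in these, and finally joining the pieces with short paths via Menger. None of this is a routine extension of the dense case; in particular, getting the Gamma inequality to restrict correctly to each $H_{a,b}$ (the paper perturbs each weight by $\delta = \sqrt{\log(Nr)/\log d}$ and rescales by $\sqrt{\log d/\log d_{a,b}}$ to keep it valid on subgraphs) and handling the bags containing high-weight vertices separately are real technical steps that your proposal does not anticipate. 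Without addressing this case, the argument only proves the bounded-$|G|$ version of the theorem, not the statement as written.
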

    
    This extends prior results of Myers and Thomason (who proved a slight generalisation of this theorem provided that $d$ grows sufficiently quickly, and $\gamma_r(H)$ remains bounded away from zero). The above theorem is essentially best possible --- it is asymptotically tight for some notion of `almost all graphs'.

\subsection{Lower bounds, and the parameter $\gamma(\parts,\admat)$}

While $\gamma_r(H)$ turns out to be helpful for proving upper bounds, the varied structures of graphs with fixed $\gamma_r$ prove problematic for proving lower bounds. The parameter $\gamma(\parts,\admat)$ uses the structure of graphs in $\mathcal{D}(\parts,\admat)$ and so is easier to prove lower bounds with. The definition of $\gamma(\parts,\admat)$ is broadly similar to that of $\gamma_r$, except that we restrict each quantity $td^{D_{ij}-w_iw_j}$ to be at most 1, rather than their sum. This is, however, only a slightly more restrictive version of satisfying the Gamma inequality with the constant 1 replaced with $\binom{r+1}{2}$. Bounding the contribution from each pair is often easier than bounding their sum, and for sufficiently small $r$ this is equivalent as noted after Remark~\ref{R:gammaineqconstant}. This equivalence only holds for sufficiently slow-growing $r$; we will consider only the regime where $r$ is constant and $d$ is large.
   
   %That $r$ is slow-growing is genuinely required for $\gamma(\parts,\admat)$ to be useful --- in the limiting case $r = \omega(\sqrt t)$, we could take all $D_{ij} = 0$ even for a complete graph (with balanced partition), and therefore obtain a value 0. This is since we are no longer controlled by the largest term --- there are so many terms, they must each individually be much smaller. The simpler definition and calculations made possible by this definition are traded off against the restriction $\log r = o(\log d)$ for applications.
    
    From the above comments, if $H$ is a member of $\mathcal{D}(\parts,\admat)(t.d)$, and further $\max D_{ij} = (1+o(1))$ (this is required so that $d$ is the base in both definitions), we have $\gamma_r(H) \leq \gamma(\parts,\admat)(1+o(1))$ by considering this choice of partition, and the slightly increased weighting. This shows that Theorem~\ref{T:gammaub} follows from Theorem~\ref{T:mainthmgamma}, since we have provided a weighting satisfying the Gamma inequality, and hence bounded $\gamma(H)$ above.
   
    Another application of the class $\mathcal{D}(\parts,\admat)$ is that in order to prove a tight lower bound on the minor extremal function, we need some notion of `almost all' graphs for it to apply to. The following theorem shows Theorems~\ref{T:gammaub} and ~\ref{T:extremalfunction} do form a matching pair of bounds --- $\gamma(\parts,\admat)(t,d)$ consists almost entirely of graphs for which the upper and lower bound match.

\begin{restatable}{theorem}{gammafamilymatch}
\label{T:gammafamilymatch}
Let $\gamma,\epsilon,r > 0$. Then there is a $D = D_{\ref{T:gammafamilymatch}}(\gamma,\epsilon,r)$ such that for all $t>d>D$, all $\parts$ weight vectors of length $r$ and $\admat$ matrices satisfying the conditions of Definition~\ref{D:vectorgamma} such that $\gamma(\parts,\admat) \geq \gamma$; for all but at most $\epsilon$ proportion of graphs $H$ from $\mathcal{D}(\parts,\admat)(t,d)$ the following holds. 
\begin{align*}
    (1-\epsilon)\gamma(\parts,\admat)\leq \gamma(H)\sqrt{\log t / \log d}\leq \gamma_r(H)\leq \gamma(\parts,\admat)(1+\epsilon)
\end{align*}

\end{restatable}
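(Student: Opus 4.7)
The plan is to prove the three claimed inequalities separately. The rightmost and middle are deterministic and hold for every $H\in \mathcal{D}(\parts,\admat)(t,d)$; only the leftmost requires the random structure.

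For the upper bound $\gamma_r(H)\leq (1+\epsilon)\gamma(\parts,\admat)$, I use the partition $(S_i)_{i=1}^r$ of $V(H)$ from the definition of $\mathcal{D}(\parts,\admat)$, with weights $(\beta_i^*)$ given by a minimiser of $\gamma(\parts,\admat)$. Since $e_H(S_i,S_j)\leq td^{D_{ij}}$ and $\beta_i^*\beta_j^*\geq D_{ij}$, every term of the Gamma inequality is at most $1$, and the sum at most $\binom{r+1}{2}$; rescaling via Remark~\ref{R:gammaineqconstant} by a factor $1+\sqrt{\log_d\binom{r+1}{2}} = 1+o_d(1)$ restores the Gamma inequality with constant $1$, giving the required bound for $d$ large. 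The middle inequality $\gamma(H)\sqrt{\log t/\log d}\leq \gamma_r(H)$ also holds deterministically: from any weighted partition $(P_i,w_i)$ attaining $\gamma_r(H)$, the vertex weighting $w(v) = w_i\sqrt{\log d/\log t}$ on $v\in P_i$ satisfies $t^{-w(u)w(v)} = d^{-w_iw_j}$, so $\sum_{uv\in E(H)} t^{-w(u)w(v)} = t\sum_{i\leq j} d^{D_{ij}-w_iw_j}\leq t$; hence $w$ is feasible for $\gamma(H)$, with mean exactly $\gamma_r(H)\sqrt{\log d/\log t}$.

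The main work lies in the leftmost inequality $(1-\epsilon)\gamma(\parts,\admat)\leq \gamma(H)\sqrt{\log t/\log d}$, which is the only place the randomness enters. Given $w$ feasible for $\gamma(H)$, I substitute $\beta(v) = w(v)\sqrt{\log t/\log d}$, so the constraint becomes $\sum_{uv\in E(H)} d^{-\beta(u)\beta(v)} \leq t$ and the target is $\frac{1}{t}\sum_v \beta(v)\geq (1-\epsilon)\gamma(\parts,\admat)$. Writing $b_i = \frac{1}{|S_i|}\sum_{v\in S_i}\beta(v)$, one has $\frac{1}{t}\sum_v \beta(v) \approx \sum_i \sigma_i b_i$; assume for contradiction this is less than $(1-\epsilon)\gamma(\parts,\admat)$. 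A compactness argument on the bounded region of feasible parameters yields some pair $(i,j)$ with $D_{ij}-b_ib_j\geq \delta = \delta(\gamma,\epsilon,r)>0$. Jensen applied to the convex function $x\mapsto d^{-x}$ then gives $\frac{1}{|S_i||S_j|}\sum_{u\in S_i,\,v\in S_j} d^{-\beta(u)\beta(v)}\geq d^{-b_ib_j}$, so for a random $H\in \mathcal{D}(\parts,\admat)(t,d)$ the expected contribution of the pair $(i,j)$ to $\sum_e d^{-\beta(u)\beta(v)}$, which equals $\floor{td^{D_{ij}}}$ times this average, is at least $td^\delta \gg t$. Bernstein's inequality for sampling without replacement shows this sum concentrates near its expectation with failure probability $\exp(-\Omega(td^\delta))$ for this fixed $\beta$, yielding the required contradiction.

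The main obstacle is making this argument uniform over all $\beta$: I discretise $\beta$ onto a grid of $K = K(\epsilon,r)$ values per vertex, giving $K^t$ weightings, and use continuity of $\beta\mapsto \sum_e d^{-\beta(u)\beta(v)}$ to transfer concentration from the grid to arbitrary $\beta$. The union bound $K^t\exp(-\Omega(td^\delta))\to 0$ is comfortable once $d$ is large enough that $d^\delta\gg \log K$. The genuine technical difficulty is that $\beta(v)$ is a priori unbounded, making the grid infinite: I handle this by truncating $\beta$ at a constant $M = M(\gamma,\epsilon)$, noting that $\frac{1}{t}\sum\beta(v) > 2\gamma(\parts,\admat)$ leaves nothing to prove, while in the complementary regime the few vertices with $\beta(v) > M$ contribute a negligible distortion to $\sum_e d^{-\beta(u)\beta(v)}$ which is re-absorbed via Remark~\ref{R:gammaineqconstant}.
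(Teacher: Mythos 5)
Your three-way decomposition, the observation that the middle and rightmost inequalities are deterministic, and your proofs of those two (the explicit weighting for the middle, and the minimiser $\beta^*$ plus Remark~\ref{R:gammaineqconstant} for the right) all agree with the paper. For the leftmost inequality, though, you take a genuinely different route from the paper, and that route has a gap. The paper does \emph{not} union bound over weightings: given a feasible $w$ for $\gamma(H)$ with small mean, it perturbs to $w'(v) = ((1+\epsilon/3)w(v)+\epsilon\gamma/2r)\sqrt{\log t/\log d}$, so that each part-average $w_i \geq \epsilon\gamma/2r$ and hence any violated constraint has $D_{ij} > \epsilon^2\gamma^2/4r^2$ with a quantitative slack --- this replaces your ``compactness argument'' and makes the gap $\delta$ explicitly uniform in $\parts,\admat$, which compactness alone does not give since the feasible $b$-region is only bounded by $\gamma/\min_i\sigma_i$ and the theorem has no lower bound on the $\sigma_i$. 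The paper then uses Markov to select sets $A_i\subset S_i$, $A_j\subset S_j$ of size $\Theta(\epsilon|S_i|)$, $\Theta(\epsilon|S_j|)$ consisting of below-average-weight vertices, deduces that $H$ must have few edges between $A_i$ and $A_j$, and union bounds over the at most $r^2\cdot 2^{2t}$ choices of $(i,j,A_i,A_j)$. So the event it controls is a sparse-cut event, not a weighting event.

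The concrete gap in your version is the truncation. Replacing $\beta(v)$ by $\min(\beta(v),M)$ \emph{increases} every $d^{-\beta(u)\beta(v)}$, so you must control the inflation of $\sum_e d^{-\beta(u)\beta(v)}$, and a constant $M=M(\gamma,\epsilon)$ does not do this. Take a vertex $v$ with $\beta(v)\gg M$ and of degree $\Delta$, whose neighbours have $\beta(u)\approx 1/\beta(v)$: each incident term moves from $\approx d^{-1}$ to $\approx d^{-M/\beta(v)} \approx 1$, a per-edge increase close to $1$. There are up to $2\gamma t/M$ such vertices (from $\bar\beta\leq 2\gamma$), and in a typical $H\in\mathcal{D}(\parts,\admat)(t,d)$ a vertex can have degree $\Theta(d)$ (or worse if some $\sigma_i$ is small, which the theorem permits), so the sum can inflate to $\Theta(td/M)$ --- a factor of $\Theta(d)$ for constant $M$. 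Remark~\ref{R:gammaineqconstant} turns a multiplicative $C$ into a $1+\sqrt{\log_d C}$ rescaling, and for $C=\Theta(d)$ this is $1+\Theta(1)$, not $1+o(1)$, so nothing is ``re-absorbed.'' The fix is to take $M=\Theta(d)$ and a grid of resolution $d^{-O(1)}$ --- the union bound still closes since $d^{\delta}\gg\log d$ --- but this is a materially different (and $d$-dependent) parameter choice from what you wrote, and the argument needs rewriting with these parameters. The paper's approach avoids truncation and discretisation entirely, which is the cleaner route.
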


\subsection*{Example}

    We now given an example where our framework can be applied. Let $H$ be a graph with vertex partition $A\cup B\cup C$, where $|A| = |B| = |C| = t/3$. We can bound $\gamma(H)$ by consideration of $\gamma_3(H)$, and in turn bound $\gamma_3(H)$ by considering only this partition.
    
    If there were $\Theta(t^2)$ edges between all pairs, by consideration of the Gamma inequality we would be forced to take a weight of essentially 1 on all vertices, and hence make no gain over existing bounds. However, suppose instead that there are only $t^{3/2}$ edges between $B$ and $C$, with $\Theta(t^2)$ between $A$ and each of $B$ and $C$. In this regime, we must have $d = \Theta(t)$, and since all of our bounds only use $\log d$ for simplicity we can just treat $d = t$.
    
    When considering this partition, we can view such an $H$ as an element of $\gamma(\parts,\admat)(t,d)$, with $\parts = (\frac13,\frac13,\frac13)$, and $D_{ii} = -\infty$, $D_{12} = D_{13} = 1$ and $D_{23} = \frac12$ (this is the limiting case). We remark it does not affect our bounds to allow up to $t$ edges within each class --- this corresponds to increasing $D_{ii}$, but they would remain negative.
    
    Suppose we assign weights $w_1,w_2,w_3$ to $A,B,C$ respectively. In order for these to provide a bound on $\gamma(\parts,\admat)$, we require $w_1w_2\geq 1$, $w_1w_3\geq 1$, and $w_2w_3\geq \frac12$ --- we would like to minimise $\sum w_i$. By symmetry, it is optimal to consider $w_2 = w_3$, and it is always optimal to take $w_2 = 1/w_1$. Writing $x$ for the value $w_2 = 1/w_1$, it remains to minimise $2x + 1/x$ over all $x\geq \frac{1}{\sqrt{2}}$.  It turns out that taking $x = \frac{1}{\sqrt{2}}$ is optimal, and so we can compute $\gamma(\parts,\admat)= 2\sqrt{2}/3 < 1$. By Theorem~\ref{T:gammafamilymatch}, for almost all graphs constructed in this fashion $\gamma_3(H) = (1+o(1))2\sqrt{2}/3$ holds.
    
    Since $2x+1/x$ has a global minimum at $x = 1/\sqrt{2}$, if we instead took $D_{23} = y<\frac12$ it remains optimal to take $w_2 = 1/\sqrt{2}$. If we take $d_{23} = y\geq\frac12$, it is optimal to take $w_2 = 1/\sqrt{y}$. We remark that we could have attained this bound using $\gamma_2$ by merging the classes $B$ and $C$ into a single class of size $2t/3$ with $O(t^{\frac32})$ edges inside.

\section{Breaking down the upper bounds}

In this section, we will break our existing upper bound Theorems~\ref{T:mainthmbip} and \ref{T:mainthmgamma} into separate, smaller theorems for dense and sparse cases. We will also state some properties of minor-minimality, and show how these together imply the aforementioned theorems --- in fact, we will prove the strengthed Theorem~\ref{T:mainweightthm} instead of Theorem~\ref{T:mainthmgamma}. To complete the proofs of our upper bounds, it will then only remain to prove Theorems \ref{T:ctdbipartite}, \ref{T:ctdgamma}, and \ref{T:sparsetheorem}.

\begin{definition}
    A graph $G$ is minor-minimal in a class $\mathcal{C}$ of graphs if $G\in \mathcal{C}$, but no proper minor of $G$ is a member of $\mathcal{C}$. 
\end{definition}

    As in \cite{ThomasonComplete}, we introduce a class of graphs \emk{} that in particular contains all graphs with $e(G)/|G|\geq m$. It will then suffice to prove our results only for minor-minimal elements of this class. This requires us to sacrifice a small amount of average degree, but in exchange we gain some useful properties.
    
 \begin{definition}
 Let $2m>k>1$. We define \emk{} to be the class of all graphs $G$ with $|G|\geq m$ and $e(G)\geq m|G|-km$.
 \end{definition}

\begin{lemma}[{\cite[Lemma 1.5]{ThomasonWales}}]
  \label{L:enkprop}
  Let $G$ be a minor-minimal element of $\mathcal{E}_{m,k}$. Then
  $|G|\geq m+1$, $e(G)\leq m|G|-mk+1$,
  $m<\delta(G)<2m$, $\kappa(G)>k$, and every edge of $G$ is
  in more than $m-1$ triangles.
\end{lemma}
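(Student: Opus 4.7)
The plan is to prove each of the five conclusions separately by contradiction with minor-minimality: assuming a conclusion fails, I exhibit a minor operation (deleting an edge, deleting a vertex, splitting along a small cut, or contracting an edge) that produces a proper minor still satisfying $e(H) \geq m|H|-mk$, hence lying in $\mathcal{E}_{m,k}$.

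The edge bound $e(G) \leq m|G|-mk+1$ comes from edge-deletion: if $e(G) \geq m|G|-mk+2$, then removing any edge yields $|G|$ vertices and $e(G)-1 \geq m|G|-mk$ edges, still in the class. Combined with $k \geq 2$, this gives $e(G) < m|G|$, so the average degree is strictly below $2m$ and in particular $\delta(G) < 2m$. Symmetrically, $\delta(G) > m$ comes from vertex deletion: a vertex $v$ with $d(v) \leq m$ would leave $e(G-v) \geq e(G) - m \geq m(|G|-1)-mk$, so $G - v$ is a proper minor in $\mathcal{E}_{m,k}$. Then $|G| \geq m+1$ is immediate from $\delta(G) > m$. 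For the triangle condition, take any edge $uv$ with $|N(u)\cap N(v)| \leq m-1$ and contract it: contraction kills one edge plus one edge per common neighbour, so $G/uv$ has $|G|-1$ vertices and at least $e(G) - m \geq m(|G|-1) - mk$ edges, again a proper minor in the class.

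For $\kappa(G) > k$, I would assume a separator $S$ with $|S|\leq k$ and write $G = G_1 \cup G_2$ with $G_1 \cap G_2 = S$ and each $G_i \setminus S$ non-empty. From $e(G_1)+e(G_2) = e(G) + e(G[S]) \geq e(G)$ and $|G_1|+|G_2| = |G|+|S| \leq |G|+k$, a short calculation shows that if both $G_i$ failed the $\mathcal{E}_{m,k}$ edge inequality, then $e(G) \leq m|G|-mk-2$, contradicting $G \in \mathcal{E}_{m,k}$. Hence one $G_i$ is a proper minor in the class. To finish I also need $|G_i| \geq m$; this follows from the already-established $\delta(G) > m$, since any vertex in $G_i \setminus S$ has all of its more than $m$ neighbours inside $G_i$.

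The only step requiring care is the connectivity one, chiefly in sequencing the arguments so that $\delta(G) > m$ is already available, and in verifying that the chosen side of the small cut really yields an honest proper minor in $\mathcal{E}_{m,k}$ rather than a graph with fewer than $m$ vertices. Everything else reduces to a direct calculation with the edge-count inequality defining $\mathcal{E}_{m,k}$.
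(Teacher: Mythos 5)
Your overall strategy (contradiction with minor-minimality via edge deletion, vertex deletion, edge contraction, and a small separator) is the right one, and the edge bound, the $\delta(G)<2m$ bound, the triangle bound, and the separator calculation are all sound. However there is a genuine circularity in the ordering of the two facts $\delta(G)>m$ and $|G|\geq m+1$. You derive $\delta(G)>m$ by deleting a low-degree vertex $v$, but for $G-v$ to lie in $\mathcal{E}_{m,k}$ you must also have $|G-v|\geq m$, i.e.\ $|G|\geq m+1$, a hypothesis you have not yet established; you then derive $|G|\geq m+1$ \emph{from} $\delta(G)>m$. If $|G|$ were exactly $\lceil m\rceil$, vertex deletion would drop below $m$ vertices and the argument would stall. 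The fix is to prove $|G|>m$ first and independently: combine $e(G)\geq m|G|-mk$ with $e(G)\leq\binom{|G|}{2}$; if $|G|$ were as small as $m$, this forces $k\geq (m+1)/2$, which is ruled out when $k$ is small relative to $m$ (as it is in every application here, where $k=\epsilon m$ with tiny $\epsilon$). Once $|G|\geq m+1$ is in hand, your vertex-deletion argument for $\delta(G)>m$ is valid, and the triangle and connectivity steps — both of which quietly rely on $|G|\geq m+1$ and $\delta(G)>m$ respectively — go through exactly as you describe.

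Two minor points: the constraint you need for $e(G)<m|G|$ is $mk>1$ (which follows from $m\geq1$, $k>1$), not $k\geq 2$; and in the separator case the contradiction you actually obtain is $e(G)<m|G|-mk$, not the stronger $e(G)\leq m|G|-mk-2$, but the weaker inequality already contradicts $G\in\mathcal{E}_{m,k}$, so nothing is lost.
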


The proof is elementary, see \cite{ThomasonComplete,ThomasonWales} for a proof.

\subsection{The dense case}

In the dense case, we will have $|G|\leq Dm$ for some constant $D$, and also $\kappa(G)\geq \eta |G|$ for some constant $\eta$. In this case with `positive fraction connectivity', we are able to build up a random partition labelled by vertices of $H$ and use the connectivity to turn this into a minor. In fact it will turn out to be useful to have the following, stronger, condition of being able to place certain vertices into the minor.

\begin{definition}
       A graph $G$ is called $H$ minor \textit{prevalent} if $|G|\geq |H|$, and for every subset $R = \{r_h: h\in H\}\subset V(G)$ of $|H|$ distinct vertices (called a set of \textsl{roots}), $G$ has a $H$ model $(V_h)$ such that $r_h\in V_h$ - in other words, $G$ has a $H$ minor at any choice of roots.
\end{definition}
 
\begin{restatable}{theorem}{ctdtheorem}
\label{T:ctdbipartite}
Let $0<\eta,\epsilon,p<1$. Then there is a constant $C_{\ref{T:ctdbipartite}}(\epsilon,\eta) = C$ such that if $G$ is a graph with $n\geq 2t\sqrt{f/\log(1/(1-p))}$ vertices, density at least $p+\epsilon$, and connectivity at least $8\eta |G|$, then $G$ is $K^*_{ft/\log t,t}$ minor prevalent, provided that $C<f<\log t$.
    \end{restatable}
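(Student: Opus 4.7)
The plan is a random partition argument with bucket sizes optimised to the extremal $\alpha$-threshold, combined with a connectivity-based repair step. Let $s = \lfloor ft/\log t\rfloor$. I would choose hub bucket size $L$ and spoke bucket size $\ell$ to minimise the total $sL + t\ell$ subject to $L^2\log(1/(1-p)) \gtrsim 2\log t$ (so that a union bound over the $\binom{s}{2}$ hub--hub adjacencies succeeds) and $L\ell\log(1/(1-p)) \gtrsim \log t$ (for the $st$ hub--spoke adjacencies). Optimising via a Lagrange calculation yields $L = \log t/\sqrt{f\log(1/(1-p))}$ and $\ell = \sqrt{f/\log(1/(1-p))}$, giving $sL + t\ell \leq 2t\sqrt{f/\log(1/(1-p))} \leq n$ by the hypothesis --- this tight matching is precisely what forces the constant in the theorem. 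A reservoir of $\Omega(\eta n)$ vertices is set aside, using the slack absorbed by replacing $p+\epsilon$ by $p$ in the constraints.

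Next, I would place each of the $s+t$ roots in its designated bucket and distribute the remaining vertices uniformly at random among the hub and spoke buckets. The key probabilistic input is a lemma of the form
\[
    \Pr\bigl[e_G(B_i,B_j) = 0\bigr] \leq (1-p-\epsilon/2)^{|B_i||B_j|}
\]
for disjoint random buckets $B_i, B_j$. Granting this, a union bound over the at most $\binom{s}{2} + st \leq t^2$ required adjacencies succeeds with positive probability by the choice of $L,\ell$, since $L^2\log(1/(1-p)) \geq (\log t)^2/f \geq \log t$ (using $f<\log t$) and $L\ell\log(1/(1-p)) \geq \log t$.

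Each branch set must induce a connected subgraph, or be connectable via repair. Because $C$ can be chosen large relative to $\log(1/(1-p))$, the spoke bucket size $\ell$ is a large constant, and a random induced subgraph in a graph of density $\geq p$ behaves roughly like $G(\ell,p)$ --- connected except with probability $\leq t^{-3}$, say --- so a union bound over the $s+t \leq 2t$ buckets handles all of them. For the rare disconnected bucket, I would exploit the $8\eta n$-connectivity to route a short repair path through the reserved pool, absorbing repair vertices into the appropriate branch set without breaking disjointness. Total repair cost is $O(t\log n)$, well inside the reservoir for $C$ large.

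The main obstacle is the probabilistic adjacency lemma: density $p+\epsilon$ alone does not imply $\Pr[e_G(A,B) = 0] \leq (1-p)^{|A||B|}$, as seen from the near-complete bipartite example $K_{n/2,n/2}$, where random disjoint subsets avoid edges with probability exponential in $|A|+|B|$ rather than $|A||B|$. The remedy is to extract a pseudorandom substructure --- via dependent random choice, or via a Szemer\'edi regularity argument applied to pairs of parts --- inside which random bipartite subsets do obey the desired edge-probability bound, and to perform the random partition there. A secondary obstacle is coordinating connectivity repairs with root placement so that the repair vertices do not destroy the adjacency guarantees established by the random partition; this is handled by restricting repair routes to pass through the reservoir and adding each repair path to exactly one bucket.
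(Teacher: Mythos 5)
Your bucket sizes $L$ and $\ell$ are exactly right, and you have correctly identified the central difficulty: density $p+\epsilon$ alone does \emph{not} give $\Pr[e_G(A,B)=0]\leq (1-p)^{|A||B|}$ for uniformly random disjoint sets, so a naive random partition cannot work. That is a genuine gap, and you are right that it is the whole ballgame. But your proposed remedies --- Szemer\'edi regularity or dependent random choice --- are not what is needed here, and neither is likely to close the gap. Regularity has tower-type dependence on the error parameter, which is incompatible with a theorem whose constant $2$ in $n\geq 2t\sqrt{f/\log(1/(1-p))}$ must be exactly tight (this is what produces the matching upper and lower bounds with constant $2\alpha$); and dependent random choice typically produces one dense pair, not the pseudorandom behaviour across all $\binom{s}{2}+st$ pairs you would need.

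The paper's fix is a much more hands-on construction (Theorem~\ref{T:mpgl}): sort the vertices of $G$ by degree $d(v_1)\geq\dotsb\geq d(v_n)$, split this ordering into $l$ consecutive \emph{blocks}, and build each part by drawing $b_i$ vertices uniformly from each block. Because a vertex $v$'s non-degree $q_v$ is constant across a block, the probability that $v$ avoids a random part $W$ factors as a product of per-block terms $\leq\prod_i(|S_i|/x)^{b_a}$, and AM--GM converts this product into $q_v^{lb_a}$ --- the exact $(1-p)^{|W|}$-type bound your union bound needs, with no loss of constant. A second AM--GM over blocks then bounds $\Pr[\widetilde W\nsim W]$ in terms of the global average $q$, killing your $K_{n/2,n/2}$ counterexample: the dense and sparse parts of $G$ are forcibly interleaved through every part because every part touches every block. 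Two smaller comments: (i) you should separate the two failure modes in the union bound --- the paper counts how many blocks are ``rejected'' by a part and conditions on that, rather than trying to make the raw bound hold for all pairs; (ii) the connectivity repair does not need to be done bucket-by-bucket. The paper's connector--projector machinery (Theorem~\ref{T:connectorprojector}) sets aside one universal set $CP$ of size $\leq 4\eta|G|$ up front, runs the block partition on $G-CP-R$, and then in one shot both connects each bucket and patches all $\leq t^{1-\epsilon/16}$ bad adjacencies; the buckets themselves are never required to be connected, which removes your $G(\ell,p)$-connectivity step and the associated $t^{-3}$ union bound entirely.
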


\begin{restatable}{theorem}{gammactdtheorem}

\label{T:ctdgamma}
Let $\epsilon,r,\eta>0$. There is a constant $T = T_{\ref{T:ctdgamma}}(\epsilon,r,\eta)$ such that the following holds for any $t>T$.
Let $H$ be a graph with $t$ vertices, and average degree~$d$. Let $(P_i,w_i)_{i=1}^r$ be a weighted partition of $H$ satisfying the Gamma inequality, restated as follows.
    \begin{align}\label{eq:gammainequalityb}\sum_{i\leq j} d^{-w_iw_j}e(P_i,P_j)/t \leq 1.\end{align}
    
    Let $w = \sum_i |P_i|w_i$, and suppose $w \geq Tt /\sqrt{\log_{1/1-p}d}$. Let $G$ have density at least $p+\epsilon$, $n\geq w \sqrt{\log_{1/1-p}d}$ vertices and connectivity at least $\eta |G|$. Then $G$ is $H$ minor prevalent
\end{restatable}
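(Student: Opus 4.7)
The plan is to construct an $H$-model in $G$ with the prescribed roots $R = \{r_h : h \in V(H)\}$ via a random partition of $V(G)\setminus R$ into classes $V_h$, combined with a connectivity-based clean-up using a reserve. Set aside a reserve $R'\subset V(G)\setminus R$ of size $\eta n/4$; place each root $r_h$ in its class $V_h$ deterministically; and assign every other non-reserve vertex independently to some class $V_h$ with probability $w_{P(h)}/w$, where $P(h)$ denotes the part of $H$ containing $h$. Since $\sum_h w_{P(h)}/w = 1$ this is a valid distribution, and the expected class sizes $\mathbb{E}|V_h| = (1+o(1))\, n\, w_{P(h)}/w$ concentrate around their means by Chernoff bounds, so that $|V_h|\,|V_{h'}| \geq (1-o(1))\, w_i w_j \log d / \log(1/(1-p))$ when $h\in P_i$, $h'\in P_j$.

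The heart of the argument is to bound, for each $hh'\in E(H)$ with $h\in P_i$, $h'\in P_j$, the probability that $V_h$ is not adjacent to $V_{h'}$ in $G$. The target estimate is
\[ \Pr[\,V_h \text{ not adjacent to } V_{h'}\,] \leq d^{-(1+\delta) w_i w_j}, \]
where $\delta = \delta(\epsilon,p) > 0$ is the slack obtained from using $1-p-\epsilon$ in place of $1-p$ as the scaling base. Summing over $E(H)$ and invoking the Gamma inequality~(\ref{eq:gammainequalityb}) yields
\[ \mathbb{E}[\text{number of bad edges}] \leq \sum_{i\leq j} e(P_i,P_j)\, d^{-(1+\delta) w_i w_j} \leq t\cdot d^{-\delta \min_{ij} w_i w_j}, \]
which can be driven below $\eta n/16$ by choosing $T = T_{\ref{T:ctdgamma}}$ sufficiently large (since $n\geq w\sqrt{\log_{1/(1-p)}d} \geq Tt$). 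An analogous and simpler computation bounds the expected number of $V_h$ which fail to induce a connected subgraph of $G$.

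Conditioning on a partition with few bad edges and few disconnected classes (which exists with positive probability), we complete the model using $\kappa(G) \geq \eta n$: for each bad edge $hh'$, Menger's theorem produces a short $V_h$-to-$V_{h'}$ path in $G$ through $R'$, whose internal vertices are absorbed into $V_h$; disconnected classes are patched similarly by pulling in connecting paths from $R'$. Because the total number of patches is far smaller than $|R'|$, the reserve is never exhausted. The main obstacle is the bad-edge probability bound itself: since the events ``$uv\in E(G)$ crosses between $V_h$ and $V_{h'}$'' share vertex dependencies, a direct product bound fails, and a naive Janson application yields only the weaker exponent $(p+\epsilon)/\log(1/(1-p))$, which can be strictly less than~$1$. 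Realising the factor $1+\delta$ therefore requires a more refined argument --- for instance conditioning on $V_{h'}$ first and using that for a typical $V_{h'}$, the neighbourhood $N_G(V_{h'})$ captures essentially a $(p+\epsilon)$-fraction of $V(G)\setminus V_{h'}$, so the probability that $V_h$ avoids it is at most $(1-w_i/w)^{|N_G(V_{h'})|}$, which after substituting $|V_{h'}|\approx n w_j/w$ gives the correct exponent in the density base $1/(1-p-\epsilon)$ rather than $1/(1-p)$. A secondary obstacle is scheduling the path insertions so that each successive patch still sees the required connectivity in the (shrinking) reserve.
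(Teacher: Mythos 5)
The proposal diverges fundamentally from the paper's proof and has a genuine gap at the step the student identifies as the ``heart of the argument'' --- the bound $\Pr[V_h\not\sim V_{h'}]\leq d^{-(1+\delta)w_iw_j}$.

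There are two independent problems with the independent-per-vertex assignment. First, the sizes $|V_h|$ have far too much variance: $\Pr[|V_h|=0]\approx(1-w_i/w)^n\approx\exp(-w_i\sqrt{\log_{1/(1-p)}d})$, and comparing exponents (write $\ell=\sqrt{\log_{1/(1-p)}d}$, so $\log d=\ell^2\log(1/(1-p))$) one sees $\exp(-w_i\ell)\gg d^{-w_iw_j}=\exp(-w_iw_j\ell^2\log(1/(1-p)))$ whenever $w_j\ell$ is large. Since $V_h=\emptyset$ trivially forces $V_h\not\sim V_{h'}$, the claimed bound fails even for $G=G(n,p)$; the Chernoff concentration the student invokes is of the wrong order (deviation probability $\exp(-\Theta(w_i\ell))$, not $d^{-w_iw_j}$). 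The paper sidesteps this entirely in Theorem~\ref{T:mpgl} by constructing parts of \emph{deterministic} size $lb_a$, with randomness only in which vertices are chosen. Second, and more fundamentally, even if the part sizes are fixed, the proposed conditioning-on-$V_{h'}$ fix relies on a structural statement (``$N_G(V_{h'})$ captures a $(p+\epsilon)$-fraction of $V(G)$'') that is neither quantitatively adequate nor true for arbitrary dense graphs. Quantitatively, $(1-w_i/w)^{(p+\epsilon)n}$ gives exponent $(p+\epsilon)/\log(1/(1-p))<1$ (around $0.57$ at the optimal $p\approx0.715$), and the more careful estimate $|N(V_{h'})|\approx(1-(1-p-\epsilon)^{|V_{h'}|})n$ collapses to the first problem once $|V_{h'}|$ is moderate. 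Structurally, a graph of density $p+\epsilon$ and connectivity $\eta n$ may concentrate its edges in a clique on $\sqrt{p}\,n$ vertices sitting alongside an $\eta$-sparse expander; a random $V_{h'}$ misses the clique with probability $(1-w_j/w)^{\sqrt{p}\,n}$, which dominates $d^{-w_iw_j}$, and then $|N(V_{h'})|$ is small. This is exactly what the degree-sorted block construction in Theorem~\ref{T:mpgl} is engineered to prevent: vertices are ordered by degree, and every part is forced to meet every degree stratum, so a part can never fall entirely into a low-degree region; the AM--GM step over the co-degree densities $q_i$ then yields the pseudorandom bound for arbitrary graphs.

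The paper's actual proof of Theorem~\ref{T:ctdgamma} is modular: it first carves out the connector--projector set $CP$ from Theorem~\ref{T:connectorprojector} (a nontrivial lemma imported from \cite{ThomasonWales}), then applies the weighted-partition Theorem~\ref{t:weightingpartition} (whose engine is Theorem~\ref{T:mpgl}) to $G-CP-R$ to produce an almost-compatible partition, and finally uses $CP$ to repair the $O(r^6 t/\epsilon)$ bad edges and to root the model at $R$. Your ad hoc reserve $R'$ plus Menger patching is a plausible substitute for $CP$ (one would have to verify that the greedy path-insertion schedule does not exhaust the reserve or destroy connectivity, which is precisely what the Connector/Projector machinery is for), but that is a secondary issue; without the blocked degree-sorted construction to get the non-adjacency probability right, the argument does not close.
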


One might be tempted to instead state the above theorem in terms of $\gamma_r$, rather than a general weighting. However,  this formulation means we do not need to evaluate $\gamma_r$ to ensure we can apply the theorem, and that additional generality is helpful for our proofs. This motivates also generalising Theorem~\ref{T:mainthmgamma} as below --- picking an optimal weighting gives the earlier Theorem~\ref{T:mainthmgamma} as an immediate corollary.

\begin{theorem}
\label{T:mainweightthm}
Let $\epsilon,r > 0$. Then there is a constant $C = C_{\ref{T:mainweightthm}}(\epsilon,r)$ such that the following holds.

Let $H$ be a graph with $t$ vertices and average degree $d>C$, equipped with a weighted partition $(P_i,w_i)_{i=1}^r$ satisfying the Gamma inequality (\ref{eq:gammainequalityb}).

Then if $w = \sum_i |P_i|w_i$ satisfies $ w \geq Ct/\sqrt{\log d}$, then any graph $G$ with $e(G)/|G|\geq(\alpha + \epsilon)w\sqrt{\log d}$ contains $H$ as a minor.
\end{theorem}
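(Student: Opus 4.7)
The plan is to reduce to a minor-minimal extremal subgraph $G' \preceq G$ in the class $\mathcal{E}_{m,k}$ and split into a dense and a sparse case, handled respectively by Theorem~\ref{T:ctdgamma} and the (deferred) Theorem~\ref{T:sparsetheorem}, following the template of \cite{ThomasonComplete,ThomasonWales}. Set $m := (\alpha+\epsilon/2) w\sqrt{\log d}$ and $k := \eta m$ for a small constant $\eta = \eta(\epsilon)$; the assumption $e(G)/|G| \geq (\alpha+\epsilon)w\sqrt{\log d}$ places $G$ in $\mathcal{E}_{m,k}$. Pass to a minor-minimal $G' \in \mathcal{E}_{m,k}$; Lemma~\ref{L:enkprop} then gives $\delta(G') > m$, $\kappa(G') > k$, and $|G'| \geq 2m+1$ (using $e(G') \leq \binom{|G'|}{2}$).

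Fix a large threshold $B = B(\epsilon)$ and write $|G'| = c\, w\sqrt{\log d}$, so $c \geq 2(\alpha+\epsilon/2)$. In the \emph{sparse case} $c > B$, invoke Theorem~\ref{T:sparsetheorem} directly: this is the large-$|G'|$, moderate-density regime that theorem is designed to handle. In the \emph{dense case} $c \leq B$, the density of $G'$ is at least $2(\alpha+\epsilon/2)/c - o(1)$. Choose the parameter $p$ of Theorem~\ref{T:ctdgamma} in the interval $[2\alpha/c,\, 2(\alpha+\epsilon/2)/c - \epsilon/(2B)]$, which is non-empty of length $\geq \epsilon/(2B)$ whenever $c \leq B$. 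The connectivity hypothesis $\kappa(G') \geq \eta'|G'|$ is verified with $\eta' = \eta(\alpha+\epsilon/2)/B$, since $k = \eta m$ and $|G'| \leq Bm/(\alpha+\epsilon/2)$. For the vertex-count hypothesis, the key identity is that $\alpha$ is the maximum of $p/(2\sqrt{\log(1/(1-p))})$ (Definition~\ref{D:gammadef}), which forces $w\sqrt{\log_{1/(1-p)} d} \leq 2\alpha w\sqrt{\log d}/p \leq |G'|$, with the final inequality holding precisely because we chose $p \geq 2\alpha/c$. Theorem~\ref{T:ctdgamma} then delivers the desired minor.

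The main obstacle is this simultaneous verification of Theorem~\ref{T:ctdgamma}'s density and size hypotheses: one cannot fix a single $p$ for the entire dense regime; instead $p$ must be chosen as a function of the density of $G'$ (close to $1$ when $G'$ is nearly complete, smaller as $c$ grows), and the window of valid $p$ only shrinks to zero as $c \to B$, which is precisely where the sparse theorem takes over. The $\epsilon/2$ slack absorbed into $m$ provides the margin needed for the interval of valid $p$ to remain open throughout the dense range. It then only remains to prove Theorems~\ref{T:ctdbipartite}, \ref{T:ctdgamma}, and \ref{T:sparsetheorem} separately.
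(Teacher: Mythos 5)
Your overall skeleton matches the paper's: pass to a minor-minimal member of $\mathcal{E}_{m,k}$, then split into a dense case handled by Theorem~\ref{T:ctdgamma} and a sparse case handled by Theorem~\ref{T:sparsetheorem}. The dense half of your argument is essentially correct: choosing $p$ via the identity $\alpha/p \geq 1/(2\sqrt{\log(1/(1-p))})$ to simultaneously satisfy the density and vertex-count hypotheses of Theorem~\ref{T:ctdgamma} is exactly what the paper does, and your window calculation is a reasonable (if slightly different) way to package it.

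The sparse case, however, contains a genuine gap: you write that one can ``invoke Theorem~\ref{T:sparsetheorem} directly,'' but that theorem is conditional. Its hypotheses require you to \emph{exhibit} an edge cover $H_1,\dotsc,H_k$ of $H$ such that every graph $\widetilde{G}$ of minimum degree $\geq \epsilon m$ and connectivity $\geq \epsilon|\widetilde{G}|$ is $H_i$-minor-prevalent for each $i$, and also that every graph $\overline{G}$ on $\geq m/7$ vertices with minimum degree $\geq (1-\epsilon)|\overline{G}|$ contains $H$ as a minor. Constructing this decomposition and verifying these prevalence conditions is in fact where nearly all the work in the paper's proof lives: one partitions $V(H)$ into weight-balanced bags $V_1,\dotsc,V_N$ plus a small set $B$ of heavy vertices, sets $H_{a,b} = H[V_a\cup V_b]$ and $H_{0,j}$ for the $B$-incident edges, and then must re-normalize the weighting (replacing $w$ by $w_{a,b}(v) = (w(v)+\delta)\sqrt{\log d/\log d_{a,b}}$) so that the Gamma inequality holds on each piece with its own average degree $d_{a,b}$ as base. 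One must also separately treat pieces with $d_{a,b}$ too small for Theorem~\ref{T:ctdgamma} to apply, and handle $H_{0,j}$ via Theorem~\ref{T:ctdbipartite} because it is a near-book rather than a graph with a nice weighted partition. None of this is automatic, and without it the appeal to Theorem~\ref{T:sparsetheorem} does not go through.
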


\subsection{The sparse case}

In the sparse case $|G| >>m$, we are able to use the size of $G$ to build up many disjoint small dense subgraphs, and find different parts of $H$ as minors in these different subgraphs. We can then use connectivity to join these minors together.

  \begin{restatable}{theorem}{sparsetheorem}
  \label{T:sparsetheorem}
    Let $k>0$ be an integer, $0<\epsilon<1/500$. Then there is a constant $D = D_{\ref{T:sparsetheorem}}(\epsilon,k)$ such that the following holds.
    Let $H$ be a graph, and $H_1,\dotsc,H_k$ graphs such that $\bigcup_i H_i = H$.
    
    Suppose that $m>D|H|$ is such that for each $1\leq i \leq k$, every graph $\widetilde{G}$ with minimum degree at least $\epsilon m$ and connectivity at least $\epsilon |\widetilde{G}|$ is $H_i$ minor prevalent. Suppose further that every graph  $\gbar$ with at least $m/7$ vertices, and minimum degree at least $(1-\epsilon)|\gbar|$ contains $H$ as a minor. 
    
    Then every graph $G$ with properties (1)-(4) below contains $H$ as a minor.
    
    \begin{enumerate}
        \item $e(G)\leq m|G|$
        \item Every edge of $G$ is in at least $m-1$ triangles
        \item $\kappa(G)\geq D|H|$
        \item $|G|\geq Dm$
    \end{enumerate}
    \end{restatable}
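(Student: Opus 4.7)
The plan is to identify inside $G$ a collection of $k$ disjoint dense ``clusters'' $B_1',\dots,B_k'$, to place an $H_i$-minor in each $G[B_i']$ using the prevalence hypothesis on $\widetilde{G}$, and then to stitch the pieces together into an $H$-minor via the connectivity hypothesis~(3).

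First I find the clusters. By~(1) the average degree of $G$ is at most $2m$, so iteratively I pick vertices $v_1,\dots,v_k$ of degree at most $4m$ whose closed neighbourhoods $B_i:=N_G[v_i]$ are pairwise disjoint; this succeeds because $|G|\geq Dm$ is large compared with $5km$. By~(2), $\delta(G[B_i])\geq m$ while $|B_i|\leq 4m+1$. If some $|B_i|\leq m/(1-\epsilon)$, then $\delta(G[B_i])\geq (1-\epsilon)|B_i|$ and $|B_i|>m/7$, so the hypothesis on $\overline{G}$ applies to $G[B_i]$ and delivers an $H$-minor immediately. Otherwise, a Thomassen/Mader-type extraction on the dense graph $G[B_i]$ produces $B_i'\subseteq B_i$ with $\delta(G[B_i'])\geq \epsilon m$ and $\kappa(G[B_i'])\geq \epsilon|B_i'|$, so that the hypothesis on $\widetilde{G}$ makes $G[B_i']$ be $H_i$-minor prevalent.

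For the assembly, set $I(h)=\{i:h\in V(H_i)\}$ for each $h\in V(H)$. Each cluster has $|\partial B_i'|\geq \kappa(G)=D|H|$ boundary vertices (else $\partial B_i'$ would separate $G$), so I may select distinct $\ell_{i,h}\in\partial B_i'$ for all $i\in I(h)$. Using~(3) and a linkage (or subset-linkage) theorem in the spirit of Thomas--Wollan, I find pairwise vertex-disjoint connected subgraphs $T_h\subseteq G$, one per $h\in V(H)$, such that $V(T_h)\cap B_i'\subseteq\{\ell_{i,h}\}$ and $T_h$ contains every $\ell_{i,h}$ with $i\in I(h)$. The prevalence of $G[B_i']$ then supplies an $H_i$-model $(V_{i,h})$ inside $B_i'$ with $\ell_{i,h}\in V_{i,h}$; the branch sets $V_h:=V(T_h)\cup\bigcup_{i\in I(h)} V_{i,h}$ are connected and pairwise disjoint, and every edge of $H$ is realised because it lies in some $E(H_i)$ and hence is realised inside $G[B_i']$.

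The main obstacle is the construction of the linkers $T_h$: the clusters together occupy $\Theta(km)$ vertices, far more than $\kappa(G)=D|H|$, so deleting all cluster interiors from $G$ would naively destroy its connectivity. The idea is that $T_h$ only needs to avoid the cluster \emph{interiors} $B_i'\setminus\partial B_i'$ rather than the boundaries, and the lower bound $|\partial B_i'|\geq D|H|$ forces the interior of each blob to be much smaller than the blob itself. Combined with the smallness assumption $\epsilon<1/500$, this should leave the graph $G\setminus\bigcup_i(B_i'\setminus\partial B_i')$ with enough connectivity to host the $\leq k|V(H)|$ disjoint linkers demanded. Making this linkage step rigorous --- possibly by building the linkers and the $H_i$-minors simultaneously or iteratively rather than placing all blobs first --- is the main technical content of the proof.
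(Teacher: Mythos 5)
Your high-level plan (find disjoint dense clusters, place rooted $H_i$-minors in them, link them up) matches the paper in spirit, but both of your two main steps have gaps, and the second one is where the paper's central idea lives.

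\textbf{Finding the clusters.} Your counting argument for producing $k$ vertices $v_1,\dots,v_k$ of degree at most $4m$ with pairwise disjoint closed neighbourhoods does not work: to place $v_{i+1}$ you need a low-degree vertex whose \emph{neighbourhood} avoids $\bigcup_{j\leq i} N[v_j]$, and the set of vertices with some neighbour in $\bigcup N[v_j]$ can be an arbitrarily large fraction of $V(G)$ (there is no control on how many vertices see the already-chosen blobs). The paper instead builds blobs of the form $S_{r+1}=N(a)\setminus B$ where $B$ is the union of previously built blobs and $a$ is a low-degree vertex with fewer than $m/7$ neighbours in $B$; if no such $a$ exists, every low-degree vertex sends $\geq m/7$ edges into $B$, and then Lemma~\ref{T:minorinbipartite} (an iterative contraction into $B$) produces a very dense minor to which the $\overline{G}$-hypothesis applies. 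Your dichotomy ``if some $|B_i|\leq m/(1-\epsilon)$'' is a different, per-blob condition and does not cover the obstruction that actually arises, namely being unable to find a new disjoint blob at all.

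\textbf{The assembly.} This is the larger gap, and you identify it yourself as ``the main technical content.'' The multi-way linkage you sketch---disjoint connected linkers $T_h$ in $G\setminus\bigcup_i(B_i'\setminus\partial B_i')$, each hitting the prescribed roots $\ell_{i,h}$---cannot be obtained from $\kappa(G)\geq D|H|$ in any obvious way: the blobs have $\Theta(m)$ vertices with $m\gg D|H|$, so $|\partial B_i'|\geq D|H|$ does \emph{not} force the interior to be small relative to the blob, and deleting all $k$ interiors may remove $\Theta(km)\gg D|H|$ vertices, after which the remaining graph can have tiny connectivity. There is also no off-the-shelf rooted-linkage theorem that gives $|H|$ connected linkers each required to pass through a specified $k$-tuple of roots while avoiding prescribed large regions. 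The paper sidesteps all of this with an idea that is absent from your proposal: it builds \emph{one extra} blob $T_0$ as a hub. Paths go from $T_0$ to roots $R_i\subset T_i$ (a single fan of $\sum|H_i|$ paths via Menger, not a general linkage), an iterative short-paths argument (Lemma~\ref{L:shortpaths}) keeps these paths from consuming too much of any $T_i$, and then the pieces of each branch set $V_h$ are joined \emph{inside $T_0$} using the high internal connectivity $\kappa(G[T_0])\geq m/40$, which dwarfs the $O(k|H|)$ many short paths needed. This reduces the global linkage problem to a local one in $T_0$ plus a Menger fan, which is why it works with only $\kappa(G)\geq D|H|$.

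You would need to import both of these devices (the $S_{r+1}=N(a)\setminus B$ construction with the bipartite-minor fallback, and the hub $T_0$) to make the argument go through; without them the plan as written does not close.
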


\subsection{Proof of Theorems~\ref{T:mainthmbip} and \ref{T:mainweightthm} }

\begin{proof}[Proof of Theorem~\ref{T:mainthmbip}]
    Reduce $\epsilon$ if necessary so that $\sqrt{\log(1/2\epsilon)} > 28/\alpha$; note this only strengthens the result. Let $H = K_{ft/\log t,t}$, $m = (2\alpha + \epsilon)t\sqrt{f}$, and $k = \epsilon m$. Let $G$ be a graph as in the statement, and note that since $e(G)/|G|\geq m$, $G\in \emk$. Replacing $G$ by a minor if necessary, we can assume $G$ is minor-minimal in \emk{} --- recall this implies $G$ has the properties of Lemma ~\ref{L:enkprop}. 
    
    Let $N$ be an integer yet to be determined. Partition each vertex class of $H$ into $N$ almost equal size parts, and add additional vertices of $H$ to these parts (no longer requiring that parts be disjoint) so that the parts of the left hand class have size exactly $\ceil{ft/N\log t}$, and $\ceil{t/N}$ for parts of the right hand class. By taking the induced subgraph on pairs of parts, we get an edge-cover of $H$ by $N^2$ subgraphs $(H_{i})$, each isomorphic to $K_{\ceil{ft/N\log t}, \ceil{t/N}}$. 
    
    We would like to apply Theorem~\ref{T:sparsetheorem} to $H$ and the partition $(H_i)$ to $N^2$ parts. For this, we need to show graphs $\gtilde$ with minimum degree at least $2\epsilon\alpha t\sqrt{f}$ and connectivity at least $\epsilon |\widetilde{G}|$ are $H_i$ minor prevalent. 
    
    By the connectivity condition, $\gtilde$ has minimum degree at least $\epsilon |\gtilde|$ and hence density $p \geq \epsilon$. The definition of $\alpha$ implies the following useful inequality for any $0<p<1$, which we immediately apply to bound $e(\gtilde)/|\gtilde|$.
    \begin{align}\label{eq:alphaproperty}
        \frac{\alpha}{p}\geq \frac{1}{2\sqrt{\log(1/(1-p))}}
        \end{align}

    \begin{align*}\frac{p}{|\widetilde{G}|}\binom{|\gtilde|}{2} = e(\gtilde)/|\gtilde|\geq \epsilon \alpha t\sqrt{f} \geq \epsilon t\sqrt{f} \frac{p}{2\sqrt{\log(1/(1-p))}}\end{align*} Therefore, $|\gtilde| \geq \epsilon t\sqrt{f/\log(1/(1-p))}$. Provided we ensure $N > 8/\epsilon$ (and we now fix such an $N$), we have $|\gtilde| \geq \ceil{\frac{t}{N}} \sqrt{\frac{\ceil{ft/N\log t}}{\ceil{t/N}\log(1/(1-p))}}$ for any $f>4N$, so we can apply Theorem~\ref{T:ctdbipartite} to deduce $\gtilde$ is $H_i$ minor prevalent; provided that $|H_i| \geq C/N^2$ is sufficiently large.
    
    We also need to find $H$ directly as a minor in a very dense graph. Let $\gbar$ be a graph with $|\gbar|\geq \alpha t\sqrt{f}/7 \geq 2t\sqrt{f/\log(1/2\epsilon)}$ and minimum degree at least $(1-\epsilon)|\gbar|$, and suppose it has density $p$. In particular, \gbar{} is $|\gbar|/3$ connected since $\epsilon < 1/6$. Therefore, \gbar{} satisfies the hypotheses of Theorem~\ref{T:ctdbipartite} applied with $H$, the current $\epsilon$, $\eta = 1/48$ and $p$ taken as $p-\epsilon$ (this is non-negative, since $G$ has minimum degree at least $|G|/3$). This imposes some lower bound $C > C_{\ref{T:ctdbipartite}}(\epsilon,1/48)$.
    
    In particular, we satisfy the conditions of Theorem~\ref{T:sparsetheorem} (recall that $G$ satisfies properties (1)-(3) by Lemma~\ref{L:enkprop}), applied with $k = N^2$ and the above choice of $(H_i)$ provided $C$ is sufficiently large. There is therefore a constant $D = D_{\ref{T:sparsetheorem}}(\epsilon) (>1)$ such that if $|G|\geq Dm$, and $m>D|H|$ then $G$ contains $H$ as a minor. We note that $m > 0.5t\sqrt{f(t)} >0.5t\sqrt{C}$ and therefore provided $C > 4D^2$ this is satisfied.
    
    The above argument shows that $H$ is a minor of $G$ if $|G|\geq Dm$, and so from here we may assume $|G|\leq Dm$. In this `dense' case, $G$ has connectivity at least $(\epsilon/D) |G|$ hence also minimum degree $(\epsilon/D)|G|$. Suppose that $G$ has density $p$, and let $p' = p - \epsilon/4D > \epsilon/4D$. By inequality~(\ref{eq:alphaproperty}), the following holds. \begin{align*}e(G)/|G| \geq 2(1+\epsilon)p\alpha t\frac{\sqrt{f}}{p} \geq(1+\epsilon)pt\frac{\sqrt{f}}{\sqrt{\log(1/(1-p))}}\end{align*} Hence $|G| \geq 2(1+\epsilon)t\sqrt{f/\log(1/(1-p))}\geq 2t\sqrt{f/\log(1/(1-p'))}$ as above, and so provided that $C$ is sufficiently large (depending on all previous constants) $G$ is $H$ minor prevalent, and in particular contains $H$ as a minor. 
\end{proof}

\begin{proof}[Proof of Theorem~\ref{T:mainweightthm}]
      Reduce $\epsilon$ if necessary so that $\epsilon < 1/500$. We remark that $G$ is an element of the family $\mathcal{E}_{m,k}$ for $m=(\alpha + \epsilon)w\sqrt{\log d}$ and $k = \epsilon m$. Replacing $G$ by a minor if necessary, we assume $G$ is minor minimal in $\mathcal{E}_{m,k}$.
      
      We first handle the case where $|G|$ is very large. Let $N$ be a (large) integer yet to be determined, and we will later pick $C$ depending on $N$. We construct a partition of the vertex set $V(H)$ as follows. Let $\func{w}{V(H)}{\mathbb{R}_{\geq 0}}$ be the weight function mapping each $v\in P_i$ to $w_i$. This can be extended additively to a function on $\mathcal{P}(V(H))$, with $w(V(H)) = w$.
      
      Let $B$ consist of all vertices of weight at least $2w/N$, and clearly $|B|\leq N/2$. Reordering if necessary, let $P_1$ be a part of maximal size, so in particular \\$|P_1|\geq|H|/r$. We will take $C>Nr$ so that $B$ and $P_1$ are disjoint (recall that $w$ is constant on parts). We start by partitioning $H' = H[V(H)\setminus(P_1\cup B)]$. Let $V_1,\dotsc,V_N$ be $N$ initially empty sets, which we call \textit{bags}. We place the vertices of $H'$ into these bags one at a time in non-increasing degree order. When we come to add $v$, we place it arbitrarily into any $V_i$ such that $w(V_i) \leq w/N$. Such an index must exist because the total weight is at most $w$. Continue in this fashion until $\cup V_i = V(H')$.
      
      At the end of this procedure, any vertex of weight at least $w/N$ must lie in a bag on its own (consider the largest weight vertex $v$ in such a bag; no other vertices can be added after $v$ since the weight is too large and so the bag contains only one vertex). In particular, the total weight $w(V_i)$ must be at most $2w/N$. We now add $P_1$ to these bags. We place either $\floor{|P_1|/N}$ or $\ceil{|P_1|/N}$ vertices from $P_1$ into each $V_i$, so that they remain disjoint, and $\cup V_i = V(H)\setminus B$. Since $|P_1|/N \geq 4$ (for $C>4rN$), we have that $w(P_1)/2N\leq w(V_i\cap P_1)\leq 2w(P_1)/N$. Combining these bounds, each bag $V_i$ must have weight at most $4w/N$.
      
      Let $(W_i)_{i=1}^N$ be some arbitrary partition of $V(H)\setminus B$ into $N$ sets, each of size either $\floor{\frac{|V(H)\setminus B|}{N}}$ or $\ceil{\frac{|V(H)\setminus B|}{N}}$. We can now construct our edge cover of $H$, consisting of the following $k = \binom{N}{2} + N$ graphs. We aim to apply Theorem~\ref{T:sparsetheorem} with these graphs, and this value of $k$.

      \begin{itemize}
          \item Let $H_{0,i}$ be the graph on vertex set $V(B)\cup W_i$, and all edges with at least one endpoint in $B$ for $1\leq i \leq n$.
          \item Let $H_{a,b} = H[V_a\cup V_b]$ for $1\leq a <b\leq N$.
      \end{itemize}
      
      We would like to proceed using Theorem~\ref{T:ctdgamma} and a restriction of the existing weighting for the graphs $H_{a,b}$, and directly use Theorem~\ref{T:ctdbipartite} for $H_{0,j}$. However, since the gamma inequality involves the order and average degree of the graph, some additional modifications must be to the weighting. We also have to consider the case where the average degree of $H_{a,b}$ is too small to apply the desired theorem.
      
      Let $d_{a,b} = e(H_{a,b})/|H_{a,b}|$. By either a result of \cite{ThomasonWales}, or equivalently applying Theorem~\ref{T:ctdgamma} using a constant weighting, there is a constant $D_0$ such that any graph \gbar{} with $e(\gbar{})/|\gbar|\geq |H_{a,b}|\sqrt{\log d_{a,b}}$ and $\epsilon |\gbar|$ connectivity is $H_{a,b}$ minor prevalent provided $d_{a,b} \geq D_0$ for some constant $D_0$ depending only on $\epsilon$. Further, for any constant $D_1>D_0$ (which can depend on previous constants, but cannot depend on $C$), by adding edges to $H_{a,b}$ if necessary and taking $C$ sufficiently large depending on $D_1$, if $d_{a,b}\leq D_1$ then any graph with $e(\gbar)/|\gbar| \geq \epsilon m \geq t\sqrt{\log(2D_1)}$ and connectivity at least $\epsilon |\gbar|$ is $H_{a,b}$ minor prevalent. We will fix $D_1$ later, although from now we assume $d_{a,b}> 1$ --- we will only use the remainder of the proof in the case $d_{a,b} > D_1$.
      
      We define a new weighting $w_{a,b}(v) = (w(v)+\delta)\sqrt{\frac{\log d}{\log d_{a,b}}}$, where \\$\delta=\sqrt{\log(Nr) / \log d}$ is chosen so that the gamma inequality holds on $H_{a,b}$ for each $1\leq a < b \leq N$ with $d_{a,b}>1$ with the weighting $w_{a,b}$. We include the calculation below, and for simplicity let $w'_i$ denote $w_{a,b}(v)$ for some $v\in P_i$ (recall that $w$ is constant on parts, so our choice does not matter). 
      
      \begin{align*}
\sum_{i\leq j}e(P_i\cap H_{a,b}, P_j\cap{H_{a,b}})d_{a,b}^{-w'_iw'_j}/|H_{a,b}| &\leq \frac{1}{|H_{a,b}|d^{\delta^2}} \sum_{i,j} e(P_i,P_j) d^{-w_iw_j} \\&\leq \frac{t}{d^{\delta^2}|H_{a,b}|} \leq 1 \end{align*}

In particular, we have$\sqrt{\log d_{a,b}}w_{a,b}(H_{a,b}) \leq (w(H_{a,b})+\delta)\sqrt{\log d}$ and so if we require that $N$ satisfies $16\sqrt{\log Nr}/ N < \epsilon$, any graph $G'$ with average degree at least $\epsilon n$ and connectivity $\epsilon |G'|$ is $H_{a,b}$ prevalent by Theorem~\ref{T:ctdgamma} applied with $\eta$ and $\epsilon$ taking the value $\epsilon$, and the weighting $w_{a,b}$ described above --- provided that $d(H_{a,b}) > T_{\ref{T:ctdgamma}}$. Taking $D_1 = 2T_{\ref{T:ctdgamma}}$ (and note that $D_1$ does not depend on $C$, so we are justified in doing so), from one of the above arguments the result follows regardless of $d_{a,b}$.

      We next consider the graph $H_{0,j}$. Each of these is a subgraph of a book $K^*_{s,t'}$, where $s\leq N$ and $t'\leq 2|H|/N$. Applying Theorem~\ref{T:ctdbipartite}, there is a constant $C'$ such that for all $\log t' > f > C'$, any graph with connectivity at least $\epsilon t'\sqrt{f}$ and $e(G)/|G|$ at least $(2\alpha + \epsilon)t'\sqrt{f}$ is $K^*_{ft't'/\log t',t'}$ prevalent. (this result also follows from Theorem~\ref{T:ctdgamma}).

      In particular, taking $t$ sufficiently large (so that $t'$ is sufficiently large), and $N$ large, $H_{0,j}$ is a minor of any graph $G'$ with average degree at least $\epsilon n$ and connectivity $\epsilon |G'|$ (note that increasing to $s= 2C'\log(t')/t'$ strengthens the result). We can now fix our choice of $N$ --- note that we have not yet had to choose $C$.
      
      It remains only to verify that $H$ is a minor of suitable dense graphs. Let $\gbar$ be a graph with minimum degree at least $(1-\epsilon)|\gbar|$ and at least $m/7$ vertices. In particular, $\gbar$ must be $|\gbar|/3$ connected since $\epsilon < 1/6$. Suppose that the density of \gbar{} is $p$. We have, $e(\gbar)/|\gbar| \geq (1-2\epsilon)\alpha wt / 7 \geq wt/\sqrt{\log(1/(2\epsilon))}$, and so applying Theorem~\ref{T:ctdgamma} with $p$ replaced with $p-\epsilon$, our value of $\epsilon$, and $\eta = 1/3$ implies that provided $d$ is sufficiently large, $H$ is a minor of $\gbar$. 
      
      Therefore, with our value of $m$ the conditions of  Theorem~\ref{T:sparsetheorem} hold (recall that $G$ is minor-minimal and hence has the properties (1)-(3) by Lemma~\ref{L:enkprop}), and so there is a constant $D_{\ref{T:sparsetheorem}}$ such that if also $|G|\geq D_{\ref{T:sparsetheorem}}m$ and $m\geq Dt$ then $G$ contains $H$ as a minor. We note $m\geq 0.3Ct$, and therefore taking $C>4D$ will suffice for this to hold.
      
      We can therefore assume $|G|\leq D_{\ref{T:sparsetheorem}}m$. In this case, $G$ is $(\epsilon/ D_{\ref{T:sparsetheorem}})|G| $ connected. Further, if $G$ has density $p$ (which must be at least $\epsilon / D_{\ref{T:sparsetheorem}}$ due to connectivity), then $e(G)/|G| \geq (\alpha + \epsilon)wt \geq p wt/ 2\sqrt{\log(1/(1-p+\epsilon/4))}$ by inequality~(\ref{eq:alphaproperty}). Taking $\eta = \epsilon/D_{\ref{T:sparsetheorem}}$, and using a value of $\eta/2$ in place of $\epsilon$, Theorem~\ref{T:ctdgamma} directly shows that if we take $d$ to be sufficiently large (depending on all previous constants), then $H$ is a minor of $G$. Taking $D$ large enough all of the above conditions are satisfied implies the result.
     
\end{proof}

  \section{Almost compatible partitions}

    \begin{definition}
    Let $H$ be a graph, and $t\in \mathbb{N}$. A partition $(V_h)_{h\in V(H)}$ of $V(G)$ is called \textsl{$t-$almost-$H$-compatible} if for all but at most $t$ edges $hh'$ of $H$, there is an edge in $G$ between $V_h$ and $V_{h'}$ (we say $V_h$ and $V_{h'}$ are adjacent, and write $V_h\sim V_{h'}$). 
    \end{definition}
    
    The aim of this section is to build an almost-$H$-compatible partition, which in the dense case we will be able to convert into a minor. This mimics the proof method of \cite{ThomasonWales}, although additional work is required here to handle vertices differently.

    \subsection{Compatible partitions with multiple classes}

\begin{theorem}\label{T:mpgl}

Let $a_1,b_1,...,a_r,b_r,l, n \geq l (a_1b_1+...+a_rb_r)$ be integers with each $a_i\geq 2$. Let $\omega \geq 2$, $\eta>0, \eta<p<1$ be constants such that $\omega\eta > 2r$. Let $G$ be a graph of density at least $p$ with at least $n$ vertices.

Then $V(G)$ has a partition $\mathcal{P}$, which can itself be decomposed into subpartitions $(\mathcal{P}_i)_{i=1}^r$, where $\mathcal{P}_i$ contains at least $a_i(1-2r/\omega\eta)$ parts, and the proportion of pairs of parts from $\mathcal{P}_i, \mathcal{P}_j$ respectively which have no edge between them is at most $4 r^2 6^{lb_ab_b}\omega^{lb_a} (\frac{q}{1-\eta})^{(1-\eta)l(l-1)b_ab_b}$, where $q = 1-p$.

\end{theorem}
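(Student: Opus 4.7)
The plan is to construct $\mathcal{P}$ by a randomized selection followed by pruning, and to bound non-adjacency by concentration. I would proceed in three phases.

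\emph{Phase 1 (random assignment).} I would uniformly at random assign vertices of $G$ into an $l$-cell structure: for each $i \in \{1,\ldots,r\}$, set aside $a_i l b_i$ vertices arranged as $a_i$ \emph{candidate parts}, each a tuple of $l$ disjoint \emph{cells} of size $b_i$. Since $n \geq l\sum_i a_i b_i$, this fits inside $V(G)$. I would view the assignment as picking, for each vertex-slot, a uniformly random vertex from the remaining pool; the parameter $\omega$ enters as an oversampling factor, i.e.\ $\omega$ random vertex candidates per slot, with the best one kept.

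\emph{Phase 2 (pruning candidate parts).} I would call a cell \emph{bad} if it contains a vertex of degree below $(p-\eta)|G|$, or more generally one whose neighborhood profile is anomalously thin; a candidate part is bad if too many of its cells are bad. By the density hypothesis, the expected number of low-degree vertices is bounded, so Markov's inequality applied across the $r$ subpartitions shows that at most a $2r/(\omega\eta)$-fraction of candidate parts are bad. Discarding these leaves at least $a_i(1 - 2r/\omega\eta)$ good parts in each $\mathcal{P}_i$, as required.

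\emph{Phase 3 (non-adjacency bound).} Fix a pair of good parts $P \in \mathcal{P}_a$, $P' \in \mathcal{P}_b$. Non-adjacency requires that for every cell $c \subseteq P$ and every cell $c' \subseteq P'$, no edge of $G$ runs between $c$ and $c'$. I would argue cell-by-cell: by concentration on the random cell choice (vertex-exposure Chernoff/Janson), all but an $\eta$-fraction of ordered distinct-cell pairs $(c,c')$ are \emph{typical}, meaning the edge density between $c$ and $c'$ is at least $(1-\eta)p$. For each typical pair, the probability of no edge among the $b_a b_b$ candidate edges is at most $(q/(1-\eta))^{b_a b_b}$. Aggregating independently over the $(1-\eta)l(l-1)$ typical distinct-cell pairs yields the $(q/(1-\eta))^{(1-\eta)l(l-1)b_a b_b}$ factor. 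The combinatorial pre-factor $4r^2 \cdot 6^{l b_a b_b} \cdot \omega^{l b_a}$ absorbs the union bounds over choices of subpartition indices $(a,b)$, over which cell-pairs are declared typical (a $\binom{l^2}{\eta l^2}$-type term bounded by $6^{l b_a b_b}$), and over the $\omega^{l b_a}$ oversampling configurations of $P$.

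The main obstacle is Phase 3: establishing the concentration statement for edge density between random cells when vertices are sampled without replacement and edges are not independent. I would expect to handle this via either a switching/coupling argument reducing to independent sampling, or by exposing vertices one at a time along a Doob martingale. A subtler point is that only the $l(l-1)$ distinct-cell pairs (rather than all $l^2$) appear in the exponent: the $l$ diagonal pairs must be handled separately because a single cell's neighborhood to itself is too correlated, so the clean independence needed for the $q^{b_a b_b}$ bound only holds between cells with disjoint indices. Getting the exponents aligned so that the loss from conditioning on typicality exactly matches the $(1-\eta)$ fraction is the central calculation I would have to carry out.
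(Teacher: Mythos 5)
Your sketch shares some surface features with the paper's argument (random parts, a block/cell structure, Markov to bound the fraction of bad parts), but it misses the paper's central idea and your Phase~3 has a genuine flaw, not just a technical obstacle.

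The key device in the paper is that the $l$ blocks are \emph{deterministic and sorted by degree}: vertices are ordered so $d(v_1)\geq\dotsc\geq d(v_n)$, the $i$th block is $\{v_{(i-1)x+1},\dotsc,v_{ix}\}$, and one records $q_v$ with $d(v)=(1-q_v)(n-1)$. The non-adjacency analysis is then \emph{asymmetric}: a vertex $v$ is ``bad for $W$'' if it has no neighbour in $W$, and $\mathbb{P}(v \text{ bad for } W)\leq q_v^{lb_a}$. The number of bad vertices in a block is controlled in terms of $q_{ix}$, and the final $(q/(1-\eta))^{(1-\eta)l(l-1)b_ab_b}$ exponent comes from an AM--GM step on the $q_{ix}$ using the fact that blocks were degree-sorted. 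Your random cell assignment has no degree-sorting at all, so none of this machinery is available.

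This is not merely a missing technicality: the concentration statement you want in Phase~3 --- that for most random cell pairs $(c,c')$ the edge density between $c$ and $c'$ is at least $(1-\eta)p$ --- is false in general. A graph of global density $p$ can have almost all its edges concentrated on a small fraction of vertices; two uniformly random cells then typically land in a near-independent region and have essentially zero edge density between them. Degree-sorting is exactly what rescues this: high-degree vertices go into early blocks, so the ``bad for $W$'' count in block $i$ is governed by $q_{ix}$, and the loss from the last block (where $q_{ix}$ may be near $1$) is controlled by working only with $B_1,\dotsc,B_{l-1}$. This, not diagonal cell pairs, is where the $l-1$ in the exponent comes from; the factor $(1-\eta)$ comes from discarding the $\leq\eta(l-1)$ ``rejected'' blocks, where rejection means too many vertices bad for $W$, not too many low-degree vertices. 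Also, $\omega$ in the paper is a threshold defining rejection (a block is rejected if more than $\omega x q_{ix}^{lb_a}$ vertices are bad for $W$), not an oversampling parameter, and the prefactor $6^{lb_ab_b}$ arises from $(2\omega)^{lb_b}(l/(l-1))^{l^2b_ab_b}$, not from a $\binom{l^2}{\eta l^2}$ union bound over typicality patterns. Finally, once a pair of cells is fixed, ``no edge between $c$ and $c'$'' is a deterministic event, so a bound like $(q/(1-\eta))^{b_ab_b}$ cannot be read off as a probability over ``candidate edges''; the randomness must be in the choice of the second part $\widetilde{W}$ given a good $W$, which is exactly how the paper sets things up.
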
	
\begin{proof}

    We may assume $n = l(a_1b_1+\dotsc+a_rb_r)$ by replacing $G$ with a maximal density subgraph on that number of vertices --- at the end, we can redistribute any extra vertices arbitrarily among the parts. We also assume that $G$ has density exactly $p$ by applying the theorem with the actual density, then weakening our final result.
    
    Order the vertices of $G$ in non-increasing degree order, so that $d(v_1)\geq d(v_2)\geq \dotsc\geq d(v_n)$ and let $(q_i)$ be such that $d(v_i) = (1-q_i)(n-1)$. To simplify notation, if $v = v_i\in V(G)$, let $q_v = q_i$. Partition the vertices of $G$ into $l$ \textsl{blocks} $(B_i)_{i=1}^l$ of size $x = (a_1b_1+\dotsc+a_rb_r)$, where $B_i = \{v_{(i-1)x+1},\dotsc,v_{ix}\}$. 
    
    We will start by constructing $a_1$ parts for $\mathcal{P}_1, \dotsc,$ and $a_r$ for $\mathcal{P}_r$ randomly and one at a time as follows, with each part of $\mathcal{P}_i$ having $b_i$ vertices in each of the $l$ blocks for a total of $lb_i$ vertices. Let $X$ be the union of the vertex sets of parts already chosen, and suppose we are now picking a part $W$ for $\mathcal{P}_a$. $W$ will consist of a uniformly randomly chosen $b_a$ vertices from each of $B_1 \setminus X,\dotsc,B_l\setminus X$. Each of these sets has size at least $b_a$ by the definition of $x$, and so we can choose $W$ in such a fashion. We could equally performed this construction by picking a (random) partition of each block $B_i$ into $a_1$ subsets of size $b_1$, $a_2$ of size $b_2$ and so forth. We then form a part for $\mathcal{P}_a$ as a random size $a_a$ subset from each $B_i$. This also shows that having fixed (or conditioned on) one part $W$, each other part consists of uniformly randomly chosen elements of each $B_i\setminus W$. 
    
    Fix now some indices $a,b$ (where $a = b$ is permitted). We say that a vertex $v$ is \textsl{bad} for a set $W$ if $v$ has no neighbour in $W$, and $v\notin W$. For two disjoint sets $W,\widetilde{W}$ to be non-adjacent, $\widetilde{W}$ must consist entirely of vertices which are bad for $W$.
    For fixed $v$, what is the probability a part $W$ from $\mathcal{P}_a$ has $v$ bad for $W$? This means we have chosen $W$ as a subset of the $(n-1-d(v))= q_v(n-1)$ non-neighbours of $v$. If $W$ was instead a uniformly random $lb_a$ set, the probability of this would be at most $(q_v(n-1)/n)^{lb_a}$. We show this upper bound still holds for our blocked setup.
    
     Let $S$ consist of the $q_v(n-1)$ non-neighbours of $v$, and let  $S_i = S\cap B_i$. The probability that within $B_i$ we choose only non-neighbours of $v$ is at most $(|S_i|/x)^{b_a}$, and since our choices are independent we have an overall upper bound via the AM-GM inequality of \begin{align}
     \label{eq:blockeq1}
        \mathbb{P}(v\textrm{ is bad for }W) \leq \prod_i (\frac{|S_i|}{x})^{b_a} \leq (\frac{n-1}{n}q_v)^{lb_a}\leq q_v^{lb_a}
    \end{align}

    In particular,the expected number of vertices in block $i$ which are bad for $W$ is at most $x(q_{ix})^{lb_a}$, recalling that the $q_i$ are non-decreasing. We say that a part $W$ \textsl{rejects} block $B_i$ (where $i<l$) if there are more than $\omega x q_{ix}^{lb_a}$ bad vertices for $W$ in $B_i$ --- the probability of this event is at most $1/\omega$.
    
    We say that a part $W$ is \textsl{good} if it rejects fewer than $\eta(l-1)$ of the blocks $B_1,\dotsc,B_{l-1}$. The probability a part is good is at least $(1-1/\omega\eta)$.

    Suppose we fix some choice of good part $W$ from $\mathcal{P}_a$. Conditional on this choice of $W$, what is the probability a (distinct) part $\widetilde{W}$ from $\mathcal{P}_b$ is not adjacent to $W$? Within each block which is not rejected by $W$, we have at most $\omega x q_{ix}^{lb_a}$ remaining choices of vertex which are not adjacent to $W$, and $x-b_a$ remaining vertices to pick from. If we let $M(W)$ be the collection of indices $1\leq j \leq l-1$ for which $B_j$ is not rejected by $W$, with $m = |M(W)|$, we have
    \begin{align}
    \label{eq:blockineq2}
        \mathbb{P}(\widetilde{W}\nsim W) \leq \prod_{i\in M(W)}(\omega x q_{ix}^{lba}/(x-b_a))^{b_b}\leq (2\omega)^{mb_b}(\prod_{i\in M(W)}q_{ix})^{lb_ab_b}
    \end{align}
    Further, since the $(q_i)$ are non-decreasing, by applying the AM-GM inequality we deduce the following.
    \begin{align*}
        \prod_{i\in M(W)}q_{ix}^{1/m}\leq \frac{1}{m}\sum_{i\in M(W)}q_{ix} \leq \frac1{xm}\sum_{i\in M(W)}\sum_{j=1}^x q_{it + j} \leq\frac{1}{xm}\sum_{j=1}^{st} q_j \leq \frac{l}{m}q
    \end{align*}
    
   The above result combined with inequality~(\ref{eq:blockineq2}) implies the following.
    \begin{align*}
         \mathbb{P}(\widetilde{W}\nsim W) &\leq (2\omega)^{mb_b}(\frac{ql}{m})^{mlb_ab_b} \leq (2\omega)^{lb_b}(\frac{l}{l-1})^{l^2b_ab_b}(\frac{q}{1-\eta})^{(1-\eta)l(l-1)b_ab_b} \\&\leq 6^{lb_ab_b}\omega^{lb_b} (\frac{q}{1-\eta})^{(1-\eta)l(l-1)b_ab_b} 
    \end{align*}
    The second inequality above uses the bounds $(1-\eta)(l-1)\leq m \leq l-1$. We now modify the partitions described earlier to derive a final partition of $V(G)$ with the desired properties. The probability that, for some $i$, there are more than $2r/\omega\eta$ proportion of bad parts in $\mathcal{P}_i$ is less than $\frac1{2r}$ by Markov's inequality. The probability that there are more than $2r^2(6^{lb_ib_j}\omega^{lb_i} (\frac{q}{1-\eta})^{(1-\eta)l(l-1)b_ib_j})$ pairs of parts from $\mathcal{P}_i$, $\mathcal{P}_j$ respectively which are non-adjacent, but at least one of $\mathcal{P}_i$ and $\mathcal{P}_j$ is good is less than $\frac1{2r^2}$ again by Markov's inequality. Thus by a union bound, there is at least one choice of our partitions $(\mathcal{P}_i)$ for which all of these events do not occur. The result follows by deleting all bad parts from each $\mathcal{P}_i$, and redistributing unused vertices arbitrarily among the good parts so that we have a genuine partition of $V(G)$.
\end{proof}

\subsection{Building almost-compatible partitions}

In this subsection, we make some choices of parameters to obtain almost-compatible partitions for our desired settings. In the next section, we will convert these into proofs of the dense cases. We will require the following inequality; for a proof, see \cite[Lemma 2.3]{ThomasonWales}
\begin{align}
\label{eq:logineq}
    \frac{\log(x+\epsilon)}{\log x}\leq (1-\epsilon)^2, \mathrm{for\,\,any\,\, } 0<\epsilon<\frac12 ,\, 0< x \leq 1-\epsilon
\end{align}

\begin{lemma}
\label{T:bipblock}
    Let $\epsilon>0$. Then there is a constant $C = C_{\ref{T:bipblock}}$ such that if $t>0$, $C < f < \log t$, and $H = K^*_{ft/\log t,t}$ the following holds.
    
    Suppose that $G$ is a graph with density at least $p+\epsilon$, where $\epsilon<p<1-\epsilon$,  and $n \geq 2\sqrt{f}t/\sqrt{\log(1/q)}$ vertices for $q = 1-p$. Then $G$ has a $t^{1-\epsilon/4}$-almost-$H$-compatible partition.
\end{lemma}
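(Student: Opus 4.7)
The strategy is to apply Theorem~\ref{T:mpgl} with $r=2$, using the two classes $\mathcal{P}_1, \mathcal{P}_2$ of parts as proxies for the right-hand (size $t$) and left-hand (book, size $s := \lceil ft/\log t \rceil$) vertex classes of $H = K^*_{s,t}$ respectively. Each part will be identified with one vertex of $H$; since the only $H$-edges are left--right or within the left side, we only need to control non-adjacent pairs between $\mathcal{P}_1, \mathcal{P}_2$ and within $\mathcal{P}_2$.

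I would take $b_1, b_2$ to be positive integers with $b_2 / b_1 \approx \log t / f$, using Dirichlet's theorem to find a rational approximation with $b_1, b_2 = O(\epsilon^{-1/2})$ and $|b_2 s / (b_1 t) - 1| = O(\sqrt{\epsilon})$; and take $l$ an integer slightly below $\sqrt{f / \log(1/q)}$. Set $a_1 \geq t$, $a_2 \geq s$ with a small margin $(1 - 2r/\omega\eta)^{-1}$ so that after the loss of parts in Theorem~\ref{T:mpgl}, at least $t$ parts survive in $\mathcal{P}_1$ and $s$ in $\mathcal{P}_2$, which forces $\omega\eta$ to be a large constant depending on $\epsilon$. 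By the AM--GM balance $b_1 t \approx b_2 s$, the total size $l(a_1 b_1 + a_2 b_2) \approx 2 l b_1 t \approx 2t\sqrt{f/\log(1/q)} \leq n$. Applying Theorem~\ref{T:mpgl} with the actual density of $G$ (at least $p + \epsilon$), the non-adjacency proportion for pairs in classes $i, j$ is bounded by a polynomial prefactor times $((q-\epsilon)/(1-\eta))^{(1-\eta) l(l-1) b_i b_j}$. Inequality~(\ref{eq:logineq}) gives $\log(1/(q-\epsilon)) \geq (1 + 2\epsilon) \log(1/q)$ for small $\epsilon$, supplying a $(1 + \Omega(\epsilon))$-slack in the exponent that lets it dominate both $\log(a_i a_j) = O(\log t)$ and the prefactor, yielding at most $t^{1-\epsilon/4}/2$ non-adjacent pairs in each of the two relevant cases.

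The final partition is obtained by identifying the first $t$ surviving parts of $\mathcal{P}_1$ with right vertices of $H$, the first $s$ surviving parts of $\mathcal{P}_2$ with left vertices, and absorbing leftover parts and any vertices outside the maximum-density subgraph into the assigned parts, which only preserves adjacency. The main obstacle is the asymptotic tightness of the size bound, since both $l(a_1 b_1 + a_2 b_2)$ and the given $n$ scale as $2 t \sqrt{f / \log(1/q)}$: the argument must simultaneously (i) find integer $b_1, b_2$ with $b_2 s/(b_1 t)$ sufficiently close to $1$ via Diophantine approximation, (ii) use the density slack $\epsilon$ through~(\ref{eq:logineq}) to shave a $(1 - \Omega(\epsilon))$ factor off the required $l$, and (iii) take $C$ large enough (hence $f, \log t \geq \mathrm{poly}(1/\epsilon)$) so that the integer rounding and the polynomial prefactor $6^{l b_1 b_2} \omega^{l b_1}$ are absorbed in the remaining $\epsilon/4$ slack.
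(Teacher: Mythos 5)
Your high-level strategy --- apply Theorem~\ref{T:mpgl} with $r=2$, one subpartition per vertex class of $H = K^*_{s,t}$, then identify surviving parts with vertices and absorb leftovers --- is exactly the paper's. But the requirement $b_1, b_2 = O(\epsilon^{-1/2})$ is impossible, and the argument breaks precisely in the regime $f \ll \log t$ that makes this lemma nontrivial. The decay in Theorem~\ref{T:mpgl} is governed by $\bigl(\tfrac{q-\epsilon}{1-\eta}\bigr)^{(1-\eta)l(l-1)b_ab_b}$, so to push the non-adjacency density $\rho$ below $t^{-1-\epsilon/4}$ you need $l^2 b_1 b_2 \geq (1+\Omega(\epsilon))\log t/\log(1/q)$ --- the prefactor and Inequality~(\ref{eq:logineq}) only supply factors close to $1$. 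On the other hand, the order constraint $n \geq l(a_1 b_1 + a_2 b_2)$, together with $a_i \geq t$ for the $t$-sized class and $b_i\geq 1$, forces $l \leq n/t \leq 2\sqrt{f/\log(1/q)}$, hence $l^2 b_1 b_2 \leq 4\,b_1 b_2\, f/\log(1/q)$. Combining, any valid choice must have $b_1 b_2 \geq \tfrac14(1+\Omega(\epsilon))\log t / f$, which is unbounded on the range $C < f < \log t$ (take $f$ fixed just above $C$ and let $t\to\infty$). With $b_1 b_2 = O(\epsilon^{-1})$ you only obtain $\rho$ of order $t^{-\Theta(\epsilon^{-1} f/\log t)}$, nowhere near $t^{-1-\epsilon/4}$. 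Dirichlet approximation cannot rescue this: the target ratio $b_2/b_1 \approx \log t/f$ is itself unbounded, so no bounded pair $(b_1,b_2)$ approximates it.

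The paper avoids both this and the rounding issue you correctly worry about by introducing an intermediate scale $M$ and taking $b_1 = \floor{(\log t)/M}$, $b_2 = \floor{f/M}$, $l = \floor{(1-\delta)M/\sqrt{f\log(1/q)}}$, $a_1 = \floor{(1+\delta)ft/\log t}$, $a_2 = \floor{(1+\delta)t}$. The dependence on $M$ cancels both in the order bound $l(a_1b_1 + a_2b_2) \approx 2t\sqrt{f/\log(1/q)} \leq n$ and in the exponent $l^2 b_1 b_2 \approx \log t/\log(1/q)$; what $M$ controls is the prefactor, since $l b_1 b_2 \approx \sqrt{f}\log t/(M\sqrt{\log(1/q)})$ must be $o(\log t)$, which needs $M = \omega(\sqrt{f})$, while $b_2 \geq 1$ needs $M \leq f$. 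The choice $M = f^{3/4}$ then simultaneously makes $b_1, b_2$ large enough that their integer rounding is a $(1+o(1))$ perturbation (no Diophantine step needed) and keeps the prefactor negligible. This is the missing idea in your sketch.
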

    
    \begin{proof}
    
    The core of this proof is an application of Theorem~\ref{T:mpgl}, although work is required to choose suitable parameters. We will be taking $r = 2$, and using parts from $\mathcal{P}_1$ to correspond to vertices in the class of size $ft/\log t$ and from $\mathcal{P}_2$ to the vertices of the other class. Our density for the application of Theorem~\ref{T:mpgl} will be $p+\epsilon$ rather than $p$. Let $M,\omega$ be large constants yet to be determined, and $\eta,\delta$  small constants which will not depend on $C$.
    
    Let $a_1 = \floor{(1+\delta)ft/\log t}$, $a_2 = \floor{(1+\delta)t}, b_1 = \floor{(\log t)/M}, b_2 = \floor{f/M}$ and $l= \floor{(1-\delta)M/\sqrt{f\log(1/q)}}$ be the remainder of the parameters. These were chosen according to some `optimal weighting' to maximise $b_1b_2$ subject to an upper bound on $\sum a_ib_i$.
    
        It is easily seen that $l(a_1b_1 + a_2b_2)\leq n$ and so the conditions of Theorem \ref{T:mpgl} are satisfied --- hence $G$ has a partition $\mathcal{P}_1\cup\mathcal{P}_2$ satisfying the hypotheses therein. We assume that $l,a_1,a_2,b_1,b_2 > 4/\delta$, since we can make both $f$ and $t$ arbitrarily large, and will choose $M$ so that this occurs.
        
        If $\rho$ is the density of non-adjacent parts between $\mathcal{P}_1$ and $\mathcal{P}_2$, we have
        \begin{align*}
            \log \rho \leq 4 + \frac{\log 6 (\sqrt{f}(\log t) / M) + (\log \omega)(\log t)/\sqrt{f}}{\sqrt{\log(1/q)}} +\\ (1-\eta)l(l-1)\floor{(\log t)/M}\floor{f/M}(\log(q-\epsilon)-\log(1-\eta))
        \end{align*}
        
       We note  $l-1 \geq (1-\delta)(1-\delta)M/\sqrt{f\log(1/q)}$. Recalling Inequality~(\ref{eq:logineq}) and that $\epsilon < p <1-\epsilon$, for sufficiently small $\eta$ we have $\frac{\log(q-\epsilon) - \log(1-\eta)}{\log q} > \frac{1}{1-\epsilon}$. We therefore deduce the following.
        
        \begin{align}
        \label{Eq:bippartmidproof}
            \log k \leq 4 + \frac{(\log 6) (f\log t / M) + \log(\omega)\log t}{\sqrt{f\log(1/q)}} - \frac{(1-\eta)(1-\delta)^3}{(1-\epsilon)}\log t
        \end{align}
        
        We now fix some choices of $\delta$ and $\eta$ sufficiently small that the final summand of Equation~(\ref{Eq:bippartmidproof}) is at most $-(1+\epsilon/2)\log t$.

        We have $|\mathcal{P}_i|\geq a_i(1-2r/\omega\eta)$. Choose (and fix) some $\omega$ so that \\$(1+\delta)(1-\delta/2)(1-4/\omega\eta) > 1$; we then have at least $ft/\log t$ parts in $\mathcal{P}_1$, and $t$ in $\mathcal{P}_2$ as given by Theorem~\ref{T:mpgl}. In order that all of our prior claims are satisfied, and in order to bound $\log k$ suitably we will require $M = o(\log t) = o(f) = \omega(\sqrt{f})$ --- we make a choice $M = f^{3/4}$ noting that $f\leq \log t$ by assumption.
        
        We choose $C$ sufficiently large that for any $\log t\geq f \geq C$, the first two terms of Inequality~(\ref{Eq:bippartmidproof}) are each at most $\epsilon/12 \log t$ --- hence $\rho\leq t^{-1-\epsilon/3}$. Picking randomly some $f(t)t/\log t$ parts from $\mathcal{P}_1$, and $t$ from $\mathcal{P}_2$,the expected number of pairs of picked parts, the first from $\mathcal{P}_1$ and the second from $\mathcal{P}_2$, is at most  $ft^2\rho/\log t\leq t^2\rho$. Thus with probability more than $\frac12$, at most $2t^{1-\epsilon/3}$ such parts are non-adjacent.
        
        Let $\rho'$ be the density of non-adjacent pairs in the left hand class. Since $b_2\leq b_1$ we immediately deduce $\rho'\leq \rho \leq t^{-1-\epsilon/3}$. Adding these together we deduce our desired partition exists.
    \end{proof}

%INSERT: Proof of (general weighting) version [not just gamma]

    \begin{theorem}
    \label{t:weightingpartition}
    Let $r> 0$, $\epsilon<p<1-\epsilon$ and suppose $H$ is a graph with $t$ vertices and $td$ edges for some $d>1$. Let $(P_i,w_i)_{i=1}^r$ be a weighted partition of $V(H)$ such that
    \begin{align}\label{eq:gammainequality}\sum_{i\leq j} d^{-w_iw_j}e(P_i,P_j)/t \leq 1\end{align}
    
    Let $w = \sum_i |P_i|w_i$, and suppose $w \geq 2^{10}\epsilon^{-3} t /\sqrt{\log_{1/q}d}$. Let $G$ have density at least $p+\epsilon$, where $p = 1-q$, and $n\geq w \sqrt{\log_{1/q}d}$ vertices. Then $G$ contains a $64r^6t/\epsilon$-almost-$H$-compatible partition.
    \end{theorem}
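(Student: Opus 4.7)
The plan is to generalise Lemma~\ref{T:bipblock}: apply Theorem~\ref{T:mpgl} with parameters tuned so that each subpartition $\mathcal{P}_i$ has $|\mathcal{P}_i|\geq|P_i|$ parts, then build the almost-compatible partition via uniformly random independent injections $\phi_i:P_i\to\mathcal{P}_i$, merging the spare parts arbitrarily. Before the main argument, reduce to the case $w_i>0$ for all $i$: if some $w_i=0$, the Gamma inequality~(\ref{eq:gammainequality}) forces $\sum_j e(P_i,P_j)\leq rt$, so all edges incident to $P_i$ may be declared bad, contributing at most $r^2 t$ — well within the $64r^6 t/\epsilon$ budget.

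Fix small constants $\delta,\eta>0$ and a large $\omega>8r/(\delta\eta)$ (all depending only on $\epsilon,p,r$); set $L:=16\log 6/(\epsilon\log(1/q))$ and take $M\geq L\sqrt{\log_{1/q}d}$ so that with
\begin{align*}
a_i=\lceil(1+\delta)|P_i|\rceil,\quad b_i=\lfloor w_i\log_{1/q}(d)/M\rfloor,\quad l=\lfloor(1-\delta')M/\sqrt{\log_{1/q}d}\rfloor
\end{align*}
one has $\sum_i a_ilb_i\leq w\sqrt{\log_{1/q}d}\leq n$ and $l\geq L$, where $\delta'=O(\delta+\epsilon)$ is tuned to make the inequalities compatible. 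The hypothesis $w\geq 2^{10}\epsilon^{-3}t/\sqrt{\log_{1/q}d}$ is essential for absorbing the $O(r)$ errors introduced by the floors, and gives $lb_i\approx(1-O(\delta+\epsilon))w_i\sqrt{\log_{1/q}d}$. Apply Theorem~\ref{T:mpgl} with density $p+\epsilon$ in place of $p$ (so its conclusion uses $q-\epsilon$): the conditions on $\omega,\eta$ then yield $|\mathcal{P}_i|\geq|P_i|$, and the proportion of non-adjacent pairs from $\mathcal{P}_i\times\mathcal{P}_j$ satisfies $\rho_{ij}\leq 4r^2\cdot 6^{lb_ib_j}\omega^{l\min(b_i,b_j)}((q-\epsilon)/(1-\eta))^{(1-\eta)l(l-1)b_ib_j}$, where the minimum symmetrises the asymmetric factor in the conclusion of Theorem~\ref{T:mpgl}. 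Invoking inequality~(\ref{eq:logineq}) with $x=q-\epsilon$ to boost $\log(1/(q-\epsilon))\geq\log(1/q)/(1-\epsilon)^2$, and combining with $l(l-1)b_ib_j\geq(1-O(\delta+\epsilon))w_iw_j\log_{1/q}d$, the main exponent is $\geq(1+\epsilon/4)w_iw_j\log d$ once $\delta,\eta$ are small.

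For the random partition, an edge $hh'\in E(H)$ with $h\in P_i$, $h'\in P_j$ is bad with probability at most $2\rho_{ij}$ (the factor $2$ only needed for $i=j$). The expected bad count is at most
\begin{align*}
8r^2\sum_{i,j}6^{lb_ib_j}\omega^{l\min(b_i,b_j)}e(P_i,P_j)d^{-(1+\epsilon/4)w_iw_j}.
\end{align*}
By the choice of $L$, $6^{lb_ib_j}\leq d^{\epsilon w_iw_j/16}$. Split index pairs at the threshold $\tau:=16\log\omega/(\epsilon\sqrt{\log d\log(1/q)})$: when $\min(w_i,w_j)\geq\tau$ the same kind of bound gives $\omega^{l\min(b_i,b_j)}\leq d^{\epsilon w_iw_j/16}$, so the residual factor is $d^{-w_iw_j}$ and the Gamma inequality controls the sum by $\leq 8r^2 t$; when $\min(w_i,w_j)<\tau$ one has $\tau^2\log d=O_\epsilon(1)$ and hence $d^{w_iw_j}=O(1)$, forcing $e(P_i,P_j)=O(t)$ and yielding $O(r^4 t)$ total over at most $r^2$ such pairs. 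Both parts fit into $64r^6 t/\epsilon$, and Markov's inequality extracts a specific realisation.

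The principal obstacle is the asymmetric factor $\omega^{lb_i}$: when the partner weight $w_j$ is very small, the exponential savings $d^{-w_iw_j}$ cannot absorb it, and naive bounds break. The resolution is the threshold split described above — above $\tau$ the exponential savings dominate, while below $\tau$ the Gamma inequality directly forces $e(P_i,P_j)=O(t)$, so only $O_r(t)$ bad edges arise. A secondary obstacle is ensuring the floors in $b_i$ and $l$ do not destroy the identity $\sum_i a_ilb_i\leq n$; the lower bound $w\geq 2^{10}\epsilon^{-3}t/\sqrt{\log_{1/q}d}$ is precisely what provides the required slack.
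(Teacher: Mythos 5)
Your overall plan matches the paper's: apply Theorem~\ref{T:mpgl} with the $r$ subpartitions sized according to the weights, then build the almost-compatible partition via independent random injections $\phi_i:P_i\to\mathcal{P}_i$. But the paper handles the troublesome asymmetric factor $\omega^{lb_a}$ quite differently from you. The paper rounds the weights \emph{up} (Equation~\eqref{eq:generalweighting}) so that, after discarding ``terrible'' parts, every surviving $w'_i$ automatically satisfies $w'_i\sqrt{\log_{1/q}d}\geq l$, which then gives a uniform lower bound on the smaller weight and makes the $\omega$ factor harmless without any case split. You instead round $b_i$ down and attempt a threshold split on $\min(w_i,w_j)$.

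The threshold split as you state it has a gap. You claim that $\min(w_i,w_j)<\tau$, together with $\tau^2\log d=O_\epsilon(1)$, gives $d^{w_iw_j}=O(1)$. This is only true when \emph{both} weights are below $\tau$: if $w_i<\tau$ but $w_j$ is a constant (say $w_j=1$), then $w_iw_j\log d$ can be of order $\sqrt{\log d}\to\infty$, so $d^{w_iw_j}\to\infty$ and the deduction $e(P_i,P_j)=O(t)$ fails. The pairs with $\min(w_i,w_j)<\tau\leq\max(w_i,w_j)$ are not covered by either of your cases as stated. The fix is easy and you have all the ingredients: split instead on $\max(w_i,w_j)$. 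If $\max\geq\tau$, the very computation you perform in Case~1 --- which actually only needs the \emph{larger} weight to exceed $\tau$, since $\omega^{l\min(b_i,b_j)}\leq d^{\epsilon w_iw_j/16}$ reduces after cancelling $w_{\min}$ to $w_{\max}\geq\tau$ --- gives the residual $d^{-w_iw_j}$. If $\max<\tau$ then both weights are small, $w_iw_j<\tau^2$, and your $d^{w_iw_j}=O(1)$ argument is valid. (Equivalently one may note that when $w_{\min}<\tau$ the factor $\omega^{l\min(b_i,b_j)}\leq\omega^{\tau\sqrt{\log_{1/q}d}}=O_{\epsilon,p,r}(1)$ is itself bounded, and the $d^{-(1+\epsilon/4)w_iw_j}$ term still absorbs $e(P_i,P_j)\leq td^{w_iw_j}$.) A secondary issue worth flagging: because you round $b_i$ down rather than up, parts with small but positive $w_i$ may yield $b_i=0$, which breaks the application of Theorem~\ref{T:mpgl}; you would need to zero out such parts and absorb their at most $O(rt)$ incident edges into the bad-edge budget, exactly as you already do for $w_i=0$.
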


    \begin{proof}
	Our proof broadly mimics that of Theorem~\ref{T:bipblock}, although the calculations are more involved. Let $\delta = \epsilon/8$, and $l = \ceil{(\delta^2/8)w\sqrt{\log_{1/q}d}/t}$. The lower bound on $w$ in the statement implies $l > 2/\epsilon$, and so $l< 2w(\delta^2/8)\sqrt{\log_{1/q}d}/t$.

        Replace each $w_i$ by a new weight $w'_i$ so that \begin{align}
        \label{eq:generalweighting}
        \ceil{(1-\delta)w_i\sqrt{\log_{1/q}d}/l} = (1-\delta)w'_i\sqrt{\log_{1/q}d}/l\end{align} We observe that $(P_i,w'_i)_{i=1}^r$ still satisfies the Gamma inequality~(\ref{eq:gammainequality}) since we have only increased the weights. We will be picking parts for each vertex of $H$ randomly, with the size of that part proportional to the weight. Let $a_i = \ceil{|P_i|(1+\delta/4)}$, and $b_i = (1-\delta)w'_i\sqrt{\log_{1/q}d}/l$.

        We now seek to apply Theorem~\ref{T:mpgl}, with $\eta = \epsilon^2/4$, and $\omega = 8r/\eta\delta$ (and hence $(1-2r/\omega\eta)(1+\delta/2) > 1$ ) --- note $G$ has density at least $p+\epsilon$ so we use this in the application. We would like to assume that all the $a_i$ and $b_i$ are sufficiently large, so that we can ensure the earlier rounding of $a_i$ and $b_i$ does not make us violate the condition $|G|\geq\sum la_ib_i$. However in general this may not be the case  --- it is valid to take a single vertex of $H$ as a part $P_i$.
 
  We resolve this problem by taking all parts with $|P_i|< 4/\delta$, and replace $w'_i$ with $0$ on the corresponding indices (call such indices \textsl{terrible}). In particular, we now satisfy $a_i \leq (1+\delta/4)^2 |P_i|$ for all $i$. We now use this to show $|G|$ is sufficiently large to apply Theorem~\ref{T:mpgl} with this weighting --- the additional factor of $2tl < (\delta^2/2) w\sqrt{\log_{1/q}d}$ arising since $w'_i\sqrt{\log_{1/q}d} \leq w_i\sqrt{\log_{1/q}d} + l/(1-\delta)$
   
        \begin{align*}
            \sum la_ib_i &\leq \sum_i (1-\delta)(1+\delta/4)^2 |P_i| w'_i\sqrt{\log_{1/q}d} \\&\leq \sum_i (1-\delta/3) |P_i| w_i\sqrt{\log_{1/q}d} + 2tl
            \leq w\sqrt{\log_{1/q} d}
        \end{align*}
        
        We now apply Theorem~\ref{T:mpgl} to $G$ with a suitable choice of $\omega$ and $\eta$. We will obtain our almost compatible partition by picking a subset of the partition $\mathcal{P}_i$ from that theorem to represent the vertices of $P_i$ in $H$. Suppose that $H$ has $td$ edges, of which $td^{D_{ij}}$ are between vertices in $P_i$ and in $P_j$.

        We restrict attention only to parts $P_i$ which are not terrible; for the at most $4r/\delta$ vertices in terrible parts, we accept the at most $4rt/\delta$ total non-edges incident with them. We only analyze pairs with $D_{ij}>0$; even if all edges between pairs with $D_{ij}\leq 0 $ are bad, this is fewer than $r^2t$ edges, which we add at the end. If $\rho$ is the density of non-adjacent pairs between such a pair $i,j$, where we order so $w'_i\leq w'_j$, we have the following bound from the application of Theorem~\ref{T:mpgl}.
        \begin{align*}
            \log \rho \leq \log(4r^2) + (\log 6)(lb_ib_j) + (\log \omega)(lb_i) +\\ (\log(q-\epsilon) - \log(1-\eta))(1-1/l)(-\log d / \log q)w'_iw'_j(1-\delta)^2 
        \end{align*}
        
        We would now like to bound this quantity more simply; in order to obtain the result in this theorem we will require $\log \rho \leq -w_iw_j\log d$ and thus $\rho \leq d^{-w_iw_j}$.
        
        We control the final two summands on the first line one at a time, and the second line as a whole. The quantity on the second line can be bounded above by $-(1+\epsilon/3)w'_iw'_j\log d $ by an application of Inequality~\ref{eq:logineq} as in the proof of Lemma~\ref{T:bipblock}.
        
        We now handle the final term on the first line, we have $lb_i \leq l(w'_i\sqrt{\log d} / l)$. We note that $w'_i\sqrt{\log_{1/q} d}\geq l/(1-\delta)$ by Equation~\ref{eq:generalweighting} and since $p>\epsilon$. In particular, since also $p<1-\epsilon$, $w'_i\sqrt{\log d} > 9r/\epsilon $ by the bound on $l$. We therefore deduce $lb_i\leq  w'_i\sqrt{\log d}\leq \epsilon/9 w_iw_j\log d$.
        
        The second term $lb_ib_j \leq (1/l)(w_iw_j\log_{1/q}d)\leq \epsilon/9 w_iw_j\sqrt{\log d}$, and finally the first term can be bounded using $\log(4r^2)\leq \log(4r^2) (\epsilon/9)^2 w'_i\sqrt{\log d}w'_j \sqrt{\log d} $.
        
       We therefore deduce $\rho \leq 4r^2 d^{\epsilon D_{ij}/9} t^{-(1+\epsilon/6)w_iw_j} \leq 4r^2d^{-(1+\epsilon/9)D_{ij}}$.

    For each $i$, independently choose a random assignment of vertices from $P_i$ to parts in $\mathcal{P}_i$. If we do this, we expect $\rho$ proportion of the edges of $H$ between $P_i$  and $P_j$ to correspond to non-adjacent pairs, where $\rho$ depends on $i$ and $j$. Take some choice of assignments for which no pairs $i,j$ exceed $r^2$ times the expected number of non-adjacent pairs corresponding to an edge of $H$ between $P_i$ and $P_j$ (this is possible by Markov's inequality). 
    
    As mentioned above, there are at most $4rt/\delta$ edges of $H$ incident with a part $P_i$ which is terrible, and at most $2r^2t$ edges are between pairs with $D_{ij} < 0$. Consider some pair $i,j$ for which neither $P_i$ nor $P_j$ is terrible, and $D_{ij}\geq 0$. In this choice of assignments, there are at most $4r^4td^{-\epsilon/9 D_{ij}}$ non-adjacent pairs corresponding to an edge of $H$ between $P_i$ and $P_j$ using the above bound --- in particular by inequality~\ref{eq:gammainequality} this is at most $4r^4$. Combining all of the above analysis by summing over pairs, at most $8r^6t/\delta$ edges of $H$ correspond to non-adjacent pairs, which completes our proof.

    \end{proof}
    
    \begin{cor}
    \label{t:gammapartition}
    Let $r>0$ and $\epsilon < p <1-\epsilon$, and let $H$ be a graph with $t$ vertices and $td$ edges for some $d>1$. 
    Suppose that $G$ has density at least $p+\epsilon$, and $n\geq \gamma_r(H)t\sqrt{\log_{1/q} d}$. Further, suppose that $\gamma_r(H)\sqrt{\log d} > 2^{10}\epsilon^{-3}\sqrt{\log(1/q)} $
    Then $G$ has a $64r^6t/\epsilon$-almost-$H$-compatible partition
    
    \end{cor}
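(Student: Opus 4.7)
The plan is to deduce this corollary as a direct application of Theorem~\ref{t:weightingpartition}, feeding into it an optimal weighted partition of $H$. By the definition of $\gamma_r(H)$, there exists a weighted partition $(P_i,w_i)_{i=1}^r$ of $V(H)$ satisfying the Gamma inequality (\ref{eq:gammainequalityb}) with $\frac{1}{t}\sum_i w_i |P_i| = \gamma_r(H)$; the minimum is attained because over each of the finitely many vertex partitions, the weighted objective is continuous and coercive on the closed feasible region in $\mathbb{R}^r_{\geq 0}$ defined by the Gamma constraint. Set $w = \sum_i w_i|P_i| = \gamma_r(H)\,t$.

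It then only remains to verify that the hypotheses of Theorem~\ref{t:weightingpartition} all hold with this choice. The density hypothesis on $G$ is identical in both statements. The vertex-count hypothesis $n\geq w\sqrt{\log_{1/q}d}$ becomes $n\geq \gamma_r(H)\,t\sqrt{\log_{1/q}d}$, exactly the assumption in the corollary. Finally, the lower bound $w\geq 2^{10}\epsilon^{-3}t/\sqrt{\log_{1/q}d}$ is equivalent to $\gamma_r(H)\sqrt{\log_{1/q}d}\geq 2^{10}\epsilon^{-3}$, i.e.\ $\gamma_r(H)\sqrt{\log d}\geq 2^{10}\epsilon^{-3}\sqrt{\log(1/q)}$, which is precisely the second hypothesis of the corollary. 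Applying Theorem~\ref{t:weightingpartition} with the weighted partition $(P_i,w_i)$ therefore produces a $64r^6 t/\epsilon$-almost-$H$-compatible partition of $V(G)$, as required. There is no real obstacle here: the content lies entirely in Theorem~\ref{t:weightingpartition}, and the corollary just packages it in terms of the graph parameter $\gamma_r(H)$ instead of an explicit weighting.
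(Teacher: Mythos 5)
Your proposal is correct and matches the paper's own proof: both pick an optimal weighted partition achieving $\gamma_r(H)$ (justifying existence by noting one optimizes over finitely many partitions, each giving a closed feasible region) and then invoke Theorem~\ref{t:weightingpartition} directly. Your extra verification of the hypothesis translations is accurate and simply spells out what the paper leaves implicit.
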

    
    \begin{proof}
        Let $(P_i,w_i)$ be some weighted partition of $H$ into $r$ parts satisfying inequality (\ref{eq:gammainequality}) with $\sum_i |P_i|w_i$ minimal. Such a partition exists, because we are optimising over a finite number of partitions, and for each such partition we are solving a closed optimisation problem to pick an optimal weighting. In particular, we have $\sum_i |P_i|w_i = t \gamma_r(H)$. We now apply Theorem~\ref{t:weightingpartition} to this weighted partition, with all other parameters as in the theorem.
    \end{proof}

    \section{Connector, Projector}
    \label{S:connproj}
    We will now borrow some small helper sets from \cite{ThomasonWales} that will allow us to turn an almost-compatible partition into a minor, under the condition that $G$ has suitable connectivity. The following theorem is a combination of \cite[Theorem 2.7 and Lemma 2.8]{ThomasonWales} --- the set $CP$ here being the union of $C$ and $P$ from those theorems.
    
    \begin{theorem}
    \label{T:connectpartition}
    \label{T:connectorprojector}
    Given $\eta > 0$ there exists $D = D_{\ref{T:connectpartition}}(\eta)$ such that if $G$ is a graph with $|G|\geq D$ and $\kappa(G)\geq 8\eta |G|$, for each $R\subset V(G)$ with $|R|\leq |G|/D$ there is a subset $CP$ of $V(G)$, where $|CP|\leq 4\eta|G|$ such that the following holds.
    
    Let $(V_r)_{r\in R}$ be a partition of $V(G)-R-CP$ into $|R|$ parts, and \\$F\subset\{\,rs\,:\,r,s\in R\}$ a collection of pairs from $R$ with $|F|\le |R|/\eta$. Then there are disjoint subsets $(U_r)_{r\in R}$ of $V(G)$ with the following properties.
    \begin{enumerate}
        \item{}$V_r\cup \{r\}\subset U_r$ for all $r\in R$, 
        \item{}$G[U_r]$ is connected for all $r\in R$, and
        \item{} there is a $U_r-U_s$ edge for every pair $rs\in F$.
\end{enumerate}
    
    \end{theorem}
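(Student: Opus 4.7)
My plan is to take $CP$ as the disjoint union of a \emph{projector} $P$ and a \emph{connector} $C$, each of size at most $2\eta|G|$, yielding $|CP|\le 4\eta|G|$. The projector is chosen so that an arbitrary partition of $V(G)-R-CP$ can be turned into a partition into connected sets by appending small disjoint ``stubs'' from $P$; the connector is chosen so that any small set of pairs from $R$ can be linked by vertex-disjoint paths inside $C$.

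For the projector I would take a uniformly random subset $P\subset V(G)$ of size $\approx 2\eta|G|$. Since $\delta(G)\geq \kappa(G)\geq 8\eta|G|$, a Chernoff and union-bound argument shows that with positive probability every $v\in V(G)\setminus P$ has $\Omega(\eta^2|G|)$ neighbours in $P$ and $G[P]$ retains connectivity $\Omega(\eta^2|G|)$. Menger's theorem then implies that for any target set $T\subset V(G)\setminus P$ of size $|T|\le|R|\le|G|/D$ and any $v\in V(G)\setminus P$, one can find $|T|$ internally disjoint $v$--$T$ paths through $P$ using only $O(1)$ vertices of $P$ each.

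For the connector I would apply a linkage/pigeonhole argument inside $G-P$: since $\kappa(G-P)\geq 6\eta|G|$ still, one can select $C\subset V(G)\setminus(R\cup P)$ of size at most $2\eta|G|$ with the property that any $F$ consisting of at most $|R|/\eta\le|G|/(D\eta)$ pairs from $R$ is realized by vertex-disjoint paths whose internal vertices lie in $C$. Given the input partition $(V_r)$ and pair set $F$: for each $r\in R$, use the projector to attach disjoint paths from $r$ to each vertex of $V_r$, producing a connected $U_r^0\supseteq V_r\cup\{r\}$; these stubs can be made pairwise disjoint across different $r$ by a greedy assignment because each uses only a tiny fraction of $P$. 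Finally, extend each $U_r^0$ by the vertices of the $C$-linkage realizing the pairs in $F$ to obtain $U_r$ satisfying (1)--(3).

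The main obstacle is that $P$ and $C$ must be fixed \emph{before} seeing either the partition $(V_r)$ or the pair set $F$, so both must enjoy a universal linkage property uniformly over all admissible inputs. Establishing these universality properties is the technical core: the projector property is essentially a ``robust small-diameter'' statement for a random subset, while the connector requires a careful iterative extraction of disjoint path systems so that the remaining graph still has enough connectivity to serve the next request. These are exactly the contents of \cite[Theorem~2.7 and Lemma~2.8]{ThomasonWales}, and as noted in the statement the theorem follows by combining those two results directly.
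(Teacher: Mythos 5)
The paper gives no proof of this theorem: it is stated verbatim as a combination of \cite[Theorem~2.7 and Lemma~2.8]{ThomasonWales}, with $CP$ being the union of the sets $C$ and $P$ from those results, and the paper simply cites them. Your proposal ultimately takes exactly the same route, deferring at the end to those two cited results. The intermediate material in your writeup --- a random projector $P$ with robust neighbourhood and connectivity properties, and a connector $C$ supporting a universal linkage --- is a plausible reconstruction of what the cited proofs contain, but it is not something this paper proves or relies on at that level of detail, so it cannot be checked against the paper's text; in particular some quantitative details in your sketch (e.g.\ ``$O(1)$ vertices of $P$ each,'' or the claimed post-deletion connectivity $\kappa(G-P)\geq 6\eta|G|$) are heuristic and would need to be made precise if you actually wanted to reprove the lemma rather than cite it. As a citation-based proof, matching the paper, your proposal is fine.
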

    
We can now prove the dense case (i.e. Theorems \ref{T:ctdbipartite} and \ref{T:ctdgamma}) using this result; the theorems are restated for convenience.

\ctdtheorem*

\begin{proof}

    Let $f,t$ be as in the statement, $q = 1-p$ and $H = K^*_{ft/\log t,t}$. Reduce $\eta$ if necessary so that $20\eta < \epsilon$ and also $(1-5\eta) \geq \sqrt{\log(1/q)/\log(1/(q-\epsilon/4))}$ --- note that $G$ still satisfies the hypotheses with this reduced value. Let $R$ be a set of roots labelled by the vertices of $H$. Take $C$ large enough that $2\sqrt{C/\log(1/\epsilon)} \geq 2/\eta$. Then since $C\leq f\leq \log t$ we have $|R|\leq |G|/D_{\ref{T:connectpartition}}(\eta)\leq \eta|G|$. 
    
    By Theorem~\ref{T:connectorprojector}, assuming $C$ is sufficiently large, there is a subset $CP$ with certain properties which we will use later. The subgraph $G-CP-R$ has density at least $p+\epsilon/2$ since we have only removed at most $\epsilon/4$ proportion of vertices.
    
     However, $G-CP-R$ may have fewer than $2t\sqrt{f/\log(1/q)}$ vertices, which prevents a naive application of Lemma~\ref{T:bipblock}. Since we only removed at most $5\eta |G|$ vertices, we have $|G-CP-R|\geq n(1-5\eta) \geq 2t\sqrt{f/\log(1/(q-\epsilon/4))}$ by inequality~(\ref{eq:logineq}), and so we can apply Lemma~\ref{T:bipblock} with $p$ replaced by $p+\epsilon/4$, and $\epsilon$ replaced with $\epsilon/4$, since the density of the subgraph $G-CP-R$ is at least $((p+\epsilon/4) + \epsilon/4)$. This application requires $D > C_{\ref{T:bipblock}}(\epsilon/4)$.
    
    As a consequence, $G-CP-R$ has a $t^{1-\epsilon/16}$-almost-$H$-compatible partition $(V_h)$. Let $F$ be the collection of adjacent pairs in $H$ whose corresponding subsets are non-adjacent. Using the properties of the set $CP$ from Theorem~\ref{T:connectpartition} with the collection of bad pairs $F$ the result follows since $|F| \leq t^{1-\epsilon/16} \leq t^{-\epsilon/16}|R|\leq |R|/\eta$ provided $C$ is large enough.
\end{proof}

\gammactdtheorem*

\begin{proof}
 Reduce $\eta$ if necessary so that $(1-5\eta)\geq \sqrt{\log(1/q)/\log(1/(q-\epsilon/4))}$ for all $1\geq q \geq \epsilon$ (where $q$ will denote $1-p$), and also  $\epsilon/(2^8r^6)>\eta$. Let $R$ be a set of roots labelled by the vertices of $H$, and we will choose $T > D = D_{\ref{T:connectpartition}}(\eta)$ so $|R|\leq |G|/D\leq \eta |G|$. By Theorem \ref{T:connectorprojector}, there is a subset $CP$ of size at most $4\eta|G|$ with certain properties which we use later.
 
Since we delete at most $5\eta$ vertices, $G-CP-R$ has density at least $p+\epsilon/2$, and further has at least $n(1-5\eta)\geq w\sqrt{\log_{1/(q-\epsilon/4)}d}$ vertices. In particular, we can apply Theorem~\ref{T:mainweightthm}, with the parameter $p$ from that theorem replaced with $p + \epsilon/4$ , and $\epsilon$ with $\epsilon/4$ provided we take $T > 2^{16}\epsilon^{-3}$ to deduce $G-CP-R$ has a $256r^6t/\epsilon$-almost-$H$-compatible partition $(V_h)$. Let $F$ denote the collection of adjacent pairs in $H$ whose corresponding subsets are non-adjacent, and note $|F|\leq 256r^6t/\epsilon \leq |R|/\eta$. Using the property of $CP$ with this choice of $F$, we obtain sets $U_h\supset V_h$ which form a $H$ minor rooted at $R$, as desired.
\end{proof}

\section{The sparse case}

Even in the sparse case, $G$ has the properties of Lemma~\ref{L:enkprop}, and hence each edge is in many triangles --- equivalently for any vertex $v$, $G[N(v)]$ has reasonable minimum degree. Since $G$ has known average degree, this allows us to find small subgraphs of reasonable density.  However, we cannot directly find an $H$ minor in these graphs (we have given away too much).

Our proof strategy is to find many different parts of $H$ in different subgraphs, and somehow join them to form an $H$ minor. We will have a problem with this approach if when attempting to find these subgraphs, we cannot prevent them from significantly overlapping. But in this case, we will be able to find a dense bipartite subgraph, and therefore use the following lemma to directly find a $H$ minor.
 \begin{lemma}
 \label{T:minorinbipartite}
    Let $G = (A,B)$ be a bipartite graph, and $m,C,\eta,\epsilon$ parameters such that $G$ has the following properties.
    \begin{itemize}
        \item $|B|\leq Cm$
        \item For all $a\in A$, $d(a)\geq \eta m$
    \end{itemize}
    Then there is a constant $D = D_{\ref{T:minorinbipartite}}(\eta,C,\epsilon)$ such that if further $|A| > Dm$, then $G$ has a minor $\widetilde{G}$ on at least $\eta m$ vertices with minimum degree at least $(1-\epsilon-1/m)|\widetilde{G}|$.
    \end{lemma}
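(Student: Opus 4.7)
The plan is to set $t := \eta m$ and build, for some $t' \approx (1 + O(\epsilon))t$, a minor on $t'$ branch sets, each of which consists of a single ``centre'' $b_i \in B$ together with a random subset of $N(b_i) \cap A$; by carrying out this assignment at random we will make almost every pair of branch sets adjacent, and a short deletion step afterwards will upgrade ``most pairs adjacent'' to the stated minimum-degree bound.

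Concretely, I would use two layers of randomness. First, choose a uniformly random subset $B' = \{b_1, \ldots, b_{t'}\} \subseteq B$ of size $t'$ (if $|B| < t'$ the conclusion is trivial because each $a$ already sees almost all of $B$). A standard hypergeometric/averaging argument gives some $B'$ for which a set $A^* \subseteq A$ of size at least $|A|/2$ has $X_a := |N(a) \cap B'| \geq \eta t'/(2C)$ for every $a \in A^*$. Second, independently for each $a \in A^*$, choose $\phi(a)$ uniformly from $\{k : a \sim b_k\}$, and take branch sets $V_i := \{b_i\} \cup \phi^{-1}(i)$. Each $V_i$ is a star centred at $b_i$, hence connected, and the $V_i$ are automatically disjoint.

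The core calculation is pairwise adjacency: $V_i \sim V_j$ whenever some $a \in L_{ij} := N(b_i) \cap N(b_j) \cap A^*$ has $\phi(a) \in \{i, j\}$. Double counting over $A$, using $d(a) \geq \eta m$ and $|B| \leq Cm$, shows $\mathbb{E}_{B'}[|N(b_i) \cap N(b_j)|] \geq |A|(\eta/C)^2 \gtrsim (D\eta^2/C^2) m$, so for $D$ large enough all but an $\epsilon$-fraction of the $\binom{t'}{2}$ pairs have $|L_{ij}|$ far exceeding $t'$. For these pairs, independence of the $\phi(a)$'s gives $\Pr[V_i \not\sim V_j] \leq (1 - 2/t')^{|L_{ij}|} \leq \epsilon^2$; summing, one can fix a realisation with at most $O(\epsilon){t'}^{\,2}$ non-adjacent pairs of branch sets. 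Deleting the $O(\sqrt{\epsilon})t'$ branch sets of largest non-degree leaves a minor on at least $\eta m$ vertices in which every vertex has non-degree at most $O(\sqrt{\epsilon})t'$; after renaming $\epsilon$, this yields $(1 - \epsilon - 1/m)|\widetilde{G}|$, the ``$1/m$'' term absorbing the self-exclusion in $|\widetilde{G}|-1$.

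The main obstacle is to control the two randomness layers simultaneously, showing $|L_{ij}| \gg t'$ for nearly every pair rather than just on average over $(b_i, b_j)$; this is what forces $|A| \geq Dm$ with $D$ depending on $\eta, C, \epsilon$, and will be the technical heart of the proof. A natural way to handle it is to pre-prune the $b \in B$ whose $A$-degrees are atypical, so that the linker counts $|L_{ij}|$ concentrate sharply around their mean, at which point a final union bound over pairs $(i,j)$ closes the argument.
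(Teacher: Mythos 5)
Your probabilistic plan (random $B$-centres plus a random linker assignment of $A$, followed by a cleanup step) is a genuinely different route from the paper's, but as written it has a gap that the suggested ``pre-pruning'' does not close. The step you flag as the technical heart --- that for $D = D(\eta,C,\epsilon)$ large enough, all but an $\epsilon$-fraction of pairs $(b_i,b_j)$ in a chosen $B'$ of size $t' = (1+\Theta(\sqrt{\epsilon}))\eta m$ satisfy $|L_{ij}| \gg t'$ --- is false in general. Partition $B$ into $k = C/\eta$ blocks $B_1,\dotsc,B_k$, each of size exactly $\eta m$, and partition $A$ into $k$ groups, with every $a$ in the $i$-th group adjacent to the whole of $B_i$ and to nothing else. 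All hypotheses of the lemma hold, and every $b \in B$ has the same $A$-degree, so pruning $b$'s with atypical degree removes nothing. Yet $N(b) \cap N(b') = \emptyset$ whenever $b,b'$ lie in different blocks, so any $B' \subseteq B$ in which every pair has a common $A$-neighbour must sit inside one block and hence has $|B'| \leq \eta m$, strictly less than the $t'$ you need. A random $B'$ of size $t' > \eta m$ has most pairs straddling two blocks with zero linkers, and deleting the resulting zero-degree branch sets drops you below $\eta m$. The averaging bound you compute (mean common neighbourhood $\gtrsim D\eta^2 m/C^2$) is genuine, but the mass is concentrated on the within-block pairs; no refinement of $B'$ by vertex-degree statistics detects this, because the relevant obstruction lives in the pairwise structure, not in degrees.

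The paper sidesteps this difficulty entirely with a deterministic, iterative contraction. Starting from $G_0 = G$, it repeatedly chooses $a \in A_i$ and contracts $a$ onto a minimum-degree vertex of $G_i[N(a)]$. At each step, either $G_i[N(a)]$ already has minimum degree at least $(1-\epsilon-1/m)d(a)$, in which case $G_i[N(a)]$ is the desired minor on at least $\eta m$ vertices, or else the contraction adds at least $\epsilon \eta m$ new edges inside $B$. Since $e(G_i[B]) \leq |B|^2/2 \leq C^2 m^2/2$ always, the second outcome can occur at most $C^2 m/(2\epsilon\eta)$ times, so taking $D > C^2/(\epsilon\eta)$ forces the first outcome for some $i$. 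In the block example above, this procedure simply turns $B_1$ into a clique after $\approx \eta m$ contractions and then outputs it, with no need to control pairwise intersections across $B$.
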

    
    \begin{proof}
        We start with a graph $G = G_0$ on vertex set $A\cup B$. Given a graph $G_i$, and vertex partition $A_i\cup B$, choose some $a\in A_i$, and form a graph $G_{i+1}$ on vertex partition $(A_i-a)\cup B$ by contracting $a$ to a vertex of minimum degree in $G_i[N(a)]$ (and deleting any multiple edges). Note each such $G_i$ is a minor of $G$. There are two cases: either the following property holds, or it does not.
        \begin{align}
        \label{eq:bipartite_graph_minor}
            \delta(G_i[N(a)])\geq (1-\epsilon-1/m)d(a)
        \end{align}
        
        If this fails, $G_{i+1}[B]$ has at least $\epsilon\eta m$ more edges than $G_i[B]$. Since all $G_i$ are simple graphs, all $G_i[B]$ must have at most $|B|^2/2$ edges. In particular, if $|A| \geq (C^2/\epsilon\eta) m$, at some step we must satisfy property (\ref{eq:bipartite_graph_minor}) --- else $e(G_{|A|}(B)) \geq C^2m \geq |B|^2/2$. Taking $D> C^2/\epsilon\eta$, the result follows by choosing our minor as $G_i[N(a)]$.
    \end{proof}
    
    We finish this section by proving the sparse case, and hence completing the proof of all our upper bounds. For convenience, we restate the theorem here.
    
    \sparsetheorem*

    \begin{proof}
    
        Suppose that $G$ is a graph with properties (1)-(4). We note that since $G$ is connected, every vertex has at least one neighbour. Further, since every edge lies in at least $m-1$ triangles, we deduce $|N(v)|\geq m$ and $\delta(G[N(v)])\geq m-1$ for every vertex $v\in V(G)$. We start by finding one at a time disjoint non-empty subsets $S_0,\dotsc,S_k$,  where each $|S_i|\leq 6m$, and $G[S_i]$ has minimum degree at least $5m/6$.  Suppose we have already found $S_0,\dotsc,S_r$, and call their union $B$ (note $|B|\leq 6km \leq |G|/3$ provided $D>6k$). 
        
        Let $A$ be all vertices not in $B$, and with degree at most $6m$. Since $e(G)\leq m|G|$, by Markov's inequality it follows $|A|\geq |G|/3$. Suppose first that every $a\in A$ has at least $m/6-1 \geq m/7$ neighbours in $B$. In this case, we apply Lemma \ref{T:minorinbipartite} with $\eta = 1/7, C=6k$ and the current $\epsilon< 1/6$ to the bipartite subgraph of $G$ induced by $A$ and $B$.
        
        This implies that $G$ has a minor $M$ with minimum degree at least $5|M|/6$ and at least $m/7$ vertices. $M$ has connectivity at least $|M|/3$ by the minimum degree condition, so by hypothesis $M$ (hence $G$) contains $H$ as a minor. Since if this happens we are done directly, we assume this does not happen and so there is some $a\in A$ with fewer than $m/6 -1$ neighbours in $B$. Then $S_{r+1} = N(a)\setminus B$ has the required properties. We can thus assume we have built the subsets $(S_i)_{i=0}^k$, and now modify them to also have good connectivity.

        Fix some set $S_i$ with the properties above. If $G[S_i]$ is at least $(m/40)$-connected, we set $T_i = S_i$. Otherwise, fix some choice of cutset $C$ of size at most $m/40$. Then $G[S_i-C]$ has a connected component $S_i'$ of size at most $3m$, and $G[S_i']$ has minimum degree at least $m(5/6-1/40)$. 
        
        If $S_i'$ is $(m/40)$-connected, set $T_i = S_i'$, and otherwise repeat the above procedure. After 3 iterations, we have either constructed $T_i$, or are left with a set $S_i'''$ of size at most $3m/4$ and minimum degree at least $m(5/6 - 3/40) > 3m/4$. This is a contradiction. We can therefore, for all $i$, construct subsets $T_i\subset S_i$ such that $G[T_i]$ has minimum degree at least $3m/4$ and connectivity at least $m/40$.
        
        The $T_i$ are now set up for building minors, and it remains to build paths between them. Recalling that $G$ is $D|H|$ connected by Property (3), we can apply Menger's theorem to find at least $\sum |H_i|$ paths from $T_0$ to an (arbitrary) subset $R$, which we write as a union of subsets $R_i\subset T_i$ of $|H_i|$ vertices. By replacing these paths with subpaths, we can assume each path meets $T_0$ in exactly one vertex.
        
        Initially, these paths may consume almost all of the vertices in the $T_i$. We will use the structure of $H_i$ to fix this case. Pick some index $i$ for which the paths use more than $81k|H|$ vertices in $T_i$ (and if no such $i$ exists, we terminate). For each path $P$ which meets $T_i$, there is a first time it intersects $T_i$, call it $x_P$, and a last time, call it $y_P$. 
        
        We will now, for each such path $P$ in turn, find a path of length at most 81 inside $T_i$ between $x_P$ and $y_P$, and replace the part of $P$ between $x_P$ and $y_P$ with this new path. We will do this in such a way all these new paths are disjoint. This procedure only reduces the number of vertices in the paths intersecting any other $T_j$, and so we can sequentially fix the $T_i$ in a terminating procedure. It remains to show we can do this for all (at most $k|H|$) paths which intersect $T_i$. For this, we use the following lemma.
        
        \begin{lemma}
        \label{L:shortpaths}
           (Short Paths) Let $G$ be a graph with connectivity at least $\eta |G|$. Then between any given pair of endpoints, $G$ contains at least $\eta|G|/2$ internally vertex disjoint paths of length less than $2/\eta + 1$.
        \end{lemma}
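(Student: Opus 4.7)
The plan is a direct counting argument built on Menger's theorem. First, I would invoke Menger's theorem for the pair of prescribed endpoints $u,v$: since $\kappa(G)\geq \eta|G|$, there exist $\kappa \geq \eta|G|$ internally vertex-disjoint $u$--$v$ paths $P_1,\dotsc,P_\kappa$. Denote by $L_i$ the length (number of edges) of $P_i$, so $P_i$ has exactly $L_i-1$ internal vertices.

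Second, I would use the fact that the internal vertices of the $P_i$ are pairwise disjoint and contained in $V(G)\setminus\{u,v\}$, giving the single useful inequality
\begin{align*}
\sum_{i=1}^{\kappa}(L_i-1) \leq |G|-2.
\end{align*}
This is the only structural input required.

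Third, I would obtain the claimed bound by a Markov/pigeonhole argument. Suppose, for contradiction, that fewer than $\eta|G|/2$ of the paths satisfy $L_i < 2/\eta+1$. Then strictly more than $\kappa-\eta|G|/2 \geq \eta|G|/2$ paths satisfy $L_i \geq 2/\eta+1$, i.e.\ each such path contributes at least $2/\eta$ internal vertices. The total internal vertex count would then strictly exceed $(\eta|G|/2)\cdot(2/\eta) = |G|$, contradicting the bound $|G|-2$ above. Hence at least $\eta|G|/2$ of the $P_i$ have length strictly less than $2/\eta+1$, as required.

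There is essentially no obstacle here beyond bookkeeping the strict inequalities and rounding so that the counting is tight; the entire argument is a one-paragraph consequence of Menger's theorem together with the observation that the internal vertices of the $\kappa$ disjoint paths cannot collectively exceed $|G|-2$.
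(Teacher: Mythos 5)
Your argument is correct and is essentially identical to the paper's proof: invoke Menger's theorem to get $\eta|G|$ internally disjoint paths, then observe that if fewer than $\eta|G|/2$ were short, the remaining long paths would jointly require more than $|G|-2$ internal vertices. No further comment is needed.
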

           \begin{proof}
            By Menger's theorem, $G$ has at least $\eta|G|$ internally vertex disjoint paths between the endpoints. At least $\eta|G| /2$ of those must have length less than $(2/\eta) + 1$, since otherwise the at least $\eta|G|/2$ longer paths each have at least $2/\eta$ internal vertices, which requires more than $|G|-2$ non-endpoint vertices since the paths are internally disjoint.
        \end{proof}
        
        Applying Lemma~\ref{L:shortpaths} with $\eta = 1/40$, $T_i$ contains at least $(1/80)(3m/4) \geq m/200$ internally vertex disjoint paths of length at most 80 (and hence with at most 81 vertices) between $x_P$ and $y_P$. Consider the set $X$ of all $x_Q,y_Q$ for paths $Q\neq P$, as well as all internal vertices of previously constructed paths. We have $|X|\leq 81k|H|$, and in particular $X$ can only meet at most $81k|H|$ of our $m/200$ paths. Provided we take $D>16400k$, at least one of the $m/200$ paths does not meet $X$. We choose such a path arbitrarily as the new path. Continuing in this fashion builds the desired new collection of paths, and so we can assume each $T_i$ meets the common vertex set $Y$ of our paths, in at most $81k|H|$ places.
        
        Let $T_i' = (T_i \setminus Y) \cup R_i$. Then $G[T_i']$ still has minimum degree at least $\epsilon m$ and connectivity at least $\epsilon |T_i|$, provided $81k|H| \leq m/500$ (recall $R_i\subset T_i$, and deleting $\eta m$ vertices can only decrease minimum degree and connectivity by $\eta m$). Thus $G[T_i']$ is $H_i$-minor-prevalent, and in particular contains a $H_i$ minor rooted at $R_i$ in any way we choose. Let the vertex subsets of such a $H_i$ model be $(V^i_h)_{h\in V(H_i)}$.
        
        Suppose $h\in V(H)$. We form a subset $V_h$ consisting of:
        \begin{itemize}
            \item All subsets $V^i_h$ from the rooted $H_i$ minors (where $h \in V(H_i)$)
            \item All paths from $T_0$ to the roots lying in $V^i_h$
        \end{itemize}
        
       Having done so, $V_h$ now contains at most $k$ connected components (one for each index $i$), each component intersects $T_0$, and $|V_h\cap T_0| = k$. Applying Lemma~\ref{L:shortpaths} to $G[T_0]$, between each pair of vertices in $G[T_0]$ there are at least $m/200 \geq 81k|H|$ internally vertex disjoint paths of length at most 80 between them. In particular, we can sequentially pick disjoint such paths to connect each subset $V_h: h\in H$, and we add these paths to $V_h$ --- this uses at most $(k-1)|H|$ paths overall.
       
       It remains to show the $V_h$ form an $H$ model. By construction, each $V_h$ is connected. For each edge $hh'\in E(H)$, there is some index $i$ for which $hh'\in E(H_i)$. In particular, there must be an edge of $G[T_i]$ from $V^i_h$ to $V^i_{h'}$. But these form subsets of $V_h$, $V_{h'}$ respectively, and so there is an edge of $G$ between $V_h$ and $V_{h'}$. Since this holds for all edges of $H$, we have constructed our model and hence completed the proof.
    \end{proof}

\section{Many non-adjacent sets in random graphs}

We now turn to lower bounds. The statement `$H$ is not a minor of $G$' can be expanded as `for all $(V_h)_{h\in V(H)}$ disjoint connected non-empty subsets of $G$, there is some pair $h\sim_Hh'$ such that $V_h\nsim_GV_{h'}'$ '. We will be proving a stronger statement by removing the connectedness constraint.

To prove a result for almost all $H$, we will find a graph $G$ which has many non-adjacent pairs of subsets for any choice $(V_h)$ of $|H|$ subsets --- in particular, it is very likely we will find one non-adjacent pair among those which also correspond to edges of $H$. The following lemma provides such a graph, but is not of itself sufficient to provide a good lower bound; this is resolved in the next section.

\begin{lemma}
\label{L:basegraph}
Let $0<p<1$, $0<\epsilon$. There exists a constant $D = D_{\ref{L:basegraph}}(p,\epsilon)$ such that for all $d>D$ there is a graph $G = G_{d,p,\epsilon}$ with $d$ vertices and density at least $p$, with the following property.

Let $l = \sqrt{\log_{1/1-p}d}$, and $0\leq x \leq l$.
Let $A_1,\dotsc,A_s; B_1,\dotsc,B_s$ be disjoint subsets of $G$, such that $|A_i||B_j|\leq xl^2$ for $1\leq i,j\leq s$. Then provided $s \geq d^{\epsilon+x}$, there are at least $\frac{1}{2}d^{-x}s^2$ pairs $(A_i,B_j)$ which are non-adjacent (i.e. have no edge between them in $G$)
\end{lemma}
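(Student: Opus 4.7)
The plan is to take $G$ to be a random Erd\H{o}s--R\'enyi graph $G(d, p^*)$ for $p^* = p + \delta(d)$ with $\delta(d) \to 0$ slowly (say $\delta(d) = d^{-1/4}$), and show both required properties hold with positive probability. The density of $G(d, p^*)$ concentrates around $p^*$ by a binomial Chernoff bound, so density at least $p$ holds with probability $1 - o(1)$. The key structural observation for the second property is that for pairwise disjoint sets $A_1,\dots,A_s,B_1,\dots,B_s$, the edge-sets $A_i\times B_j$ are themselves disjoint across distinct pairs $(i,j)$, so the events ``$(A_i,B_j)$ is non-adjacent in $G$'' are mutually independent, each occurring with probability exactly $(1-p^*)^{|A_i||B_j|}$.

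Given a family with $a=\max_i|A_i|$, $b=\max_j|B_j|$, and $ab \leq xl^2$, the expected number of non-adjacent pairs is at least $s^2 (1-p^*)^{ab}$, which equals $s^2 d^{-x}(1-o(1))$: the perturbation from $p$ to $p^*$ costs only a $(1-o(1))$ factor in the probability, since $ab \leq xl^2 \leq l^3 = O((\log d)^{3/2})$. A standard Chernoff bound on the independent events then yields: for each fixed family, the number of non-adjacent pairs is at least $\tfrac12 s^2 d^{-x}$ except with probability $\exp(-\Omega(s^2d^{-x}))$.

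For the union bound, by monotonicity of $d^{-x}$ in $x$ it suffices to verify the canonical case $x = ab/l^2$ for each family, since for any valid $x \geq ab/l^2$ the conclusion only weakens. I would sum over parameters $(a,b,s)$ (there are polynomially many pairs $(a,b)$ with $ab \leq l^3$) and over ordered disjoint families with $|A_i|\leq a$, $|B_j|\leq b$ (at most $d^{s(a+b+O(1))}$ such families). The hypothesis $s\geq d^{\epsilon + x}$ combined with $(1-p^*)^{ab}\geq d^{-x}(1-o(1))$ gives $s^2(1-p^*)^{ab}\geq s\cdot d^\epsilon$, which for $d$ large dominates the union-bound exponent $s(a+b+O(1))\log d$ since $a+b = O((\log d)^{3/2})$. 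Summing the resulting geometric series in $s \geq d^{\epsilon+x}$ and then over the polynomially many $(a,b)$ contributes only $o(1)$ total failure probability.

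The main obstacle is reconciling the density constraint (which pulls towards $p^*\geq p$) with the non-adjacency calculation (which is sharpest at $p^*\leq p$): the choice $p^* = p + o(1)$ threads this needle by absorbing the discrepancy into the $(1-o(1))$ factors, so that for $d$ sufficiently large both events hold simultaneously with positive probability, yielding the existence of the required $G_{d,p,\epsilon}$.
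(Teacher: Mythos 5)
Your proposal is correct and follows the same outline as the paper's proof: sample a random Erd\H{o}s--R\'enyi graph, exploit the mutual independence of the non-adjacency events across pairwise disjoint pairs, apply a Chernoff lower-tail bound with exponent $\Omega(s^2 d^{-x}) \geq \Omega(s d^\epsilon)$, and union bound over the (polynomially many) values of $x$ and the at most $d^{O(l^3 s)}$ families. The one genuine difference is your choice of edge probability $p^*=p+o(1)$ rather than exactly $p$, which makes the density lower bound hold with probability $1-o(1)$ and thus avoids the paper's appeal to the fact that a binomial exceeds its mean with probability at least $1/4$ (or its CLT sketch), at the harmless cost of a $(1-o(1))$ factor in each non-adjacency probability since $ab\le l^3 = O((\log d)^{3/2})$.
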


\begin{proof}
    Let $l = \sqrt{\log_{1/1-p}d}$. Since $|A_i||B_j|$ is an integer, and reducing $x$ for fixed $|A_i||B_j|$ makes the result stronger, we only need consider each of $x = 0,1/l^2,\dotsc,\floor{l^3}/l^2$ --- the $x = 0$ case being trivial, so we also assume $x\geq 1/l^2$.
    
    Let $G \sim G(d,p)$ be a random graph on $d$ vertices, with edges present independently with probability $p$. We will show that with positive probability, $G$ has the required properties.
    In \cite{BinomialOneQuarter}, it is shown the probability $G(d,p)$ has density at least $p$ is at least $1/4$, provided that $\binom{d}{2}p \geq 1$. For our result to remain self-contained, we give a sketch of a stronger result --- though we require that $d$ is large.
    Let $X \sim \mathcal{B}(\binom{d}{2},p)$ be the random variable $e(G)$. Let $\mu = \mathbb{E}(X)$, and $\sigma^2 = \mathrm{Var}(X) = \binom{d}{2}p(1-p)$. By the Central Limit Theorem, $\frac{X-\mu}{\sigma}$ converges in distribution to a standard normal $N(0,1)$ random variable (as $d$ increases). Let $z$ be such that $\Phi(z) = 2/3$. Then $\mathbb{P}(X\leq \mu + z\sigma)$ converges to $2/3$, and in particular for $d>D$ sufficiently large, with probability at least $1/4$ the random variable exceeds its mean.

    We will require the Chernoff bound in the following form, which follows from \cite{AlonSpencer}, Theorems A.1.13 and A.1.11 .
    \begin{lemma}[Chernoff bound]
    \label{L:chernoff}
        Let $(X_i)_{i=1}^n$ be independent random variables, taking values in $\{0,1\}$, such that $\mathbb{E}(\sum X_i)= \mu$. Let $X = \sum X_i$. Then\\ $\mathbb{P}(X>a+\mu)\leq \exp(-a^2/2\mu + a^3/2\mu^2)$ holds, and in particular so do the following. 
        \begin{align}
        \label{eq:chernoff1}
            \mathbb{P}(X>a+\mu)\leq \exp(-a^2/4\mu) \,(\mathrm{provided}\,\, a<\mu/2)\\
        \label{eq:chernoff2}
            \mathbb{P}(X<\mu-a) \leq \exp(-a^2/2\mu)
        \end{align}
        Moreover, since these bounds are monotonic in $\mu$, inequality (\ref{eq:chernoff1}) holds under the weaker condition $\mathbb{E}(\sum X_i) \leq \mu$, and inequality (\ref{eq:chernoff2}) for $\mathbb{E}(\sum X_i) \geq \mu$.
    \end{lemma}
    
    Fix now some choice of $x$, $(A_i),(B_j)$. Since each edge is present independently at random, $\mathbb{P}(\textrm{there is no }\hspace{1pt}A_i - B_j\textrm{ edge}) = (1-p)^{|A_i||B_j|}\geq d^{-x}$. In particular, the expected number of non-adjacent $A-B$ pairs is at least $s^2d^{-x}$
    
    Let $X_{ij}$ be the indicator variable for $A_i$ and $B_j$ being non-adjacent. Since our sets are disjoint, these depend on different edges and so are independent. In particular, we can apply the Chernoff bound with $a = \frac{1}{2}d^{-x}s^2$ obtaining
    \begin{align*}
        \mathbb{P}(\sum X_{ij} \leq \frac{1}{2}d^{-x}s^2) \leq \exp(-\frac{1}{8}d^{-x}s\cdot s) \leq \exp(-\frac{1}{8}d^{\epsilon}s)
    \end{align*}

    To finish, we bound the number of choices of $(A_i)$ and $(B_j)$. We remark it suffices to consider the case where all $A_i$ and $B_j$ are non-empty, and in particular $|A_i|\leq l^3$, $|B_j|\leq l^3$. There are at most $\binom{\binom{d}{l^3}}{s}^2\leq d^{2l^3s}$ choices of (non-empty) sets satisfying these constraints. Therefore, for each $x$ and $s$, the probability that some choice of $(A_i)$ and $(B_j)$ have too few non-adjacent pairs is at most $\exp(-\frac{1}{16}d^{\epsilon}s)$ since $d^{2l^3s}\leq \exp(\frac{1}{16}d^{\epsilon}s)$ for large $d$. We can sum over the at most $l^3$ nonzero values of $x$, and the at most $d$ choices for $s$ (each of which is at least $d^{\epsilon}$, and deduce that except with probability at most $\exp(-\frac{1}{32}d^{2\epsilon})$ we have the desired property for all choices of $s,x$. Taking $D$ large enough, this probability is less than $\frac14$, and hence with positive probability $G$ satisfies all of the hypotheses of the theorem; in particular one such graph exists.

\end{proof}

We will make use of Lemma~\ref{L:basegraph} in the following section. We finish this section by proving Theorem~\ref{T:gammafamilymatch} from Section~2, again restated for convenience.
\gammafamilymatch*

\begin{proof}
    Recalling the discussion from Section~2, the final two inequalities are easily seen to hold for all elements of our family from the definitions provided $D$ is sufficiently large, so we need only focus on the first. We seek to prove this lower bound on $\gamma(H)$ for almost all elements of the class.

    Suppose that $H\in \mathcal{D}(\parts,\admat)(t,d)$ has $\gamma(H)\leq (1-\epsilon)\gamma(\parts,\admat)\sqrt{\log d / \log t}$. Let $w$ be a weight function with $\sum_{uv\in E(H)}t^{-w(u)w(v)}\leq t$, with average weight $\frac1t\sum_{u\in H}w(u) = \gamma(H)$. Consider the below modified weighting.\begin{align*}w'(v) = ((1+\epsilon/3)w(v) + \epsilon\gamma / 2r)\sqrt{\log t / \log d}\end{align*} If $w_i = w'(S_i) / |S_i|$ is the average weight on $S_i$, we have $\sum_i \sigma_i w_i \leq (1-\epsilon/6)\gamma(\parts,\admat)$. By the definition of $\gamma(\parts,\admat)$, there must be some $i,j$ such that $w_iw_j < D_{ij}$. Further, since $w_i\geq \epsilon\gamma /2r$ for all $i$, we must have $D_{ij} > \epsilon^2\gamma^2/4r^2$.
    
    By Markov's inequality, there are sets $A_i\subset S_i$ and $A_j\subset S_j$, of size at least $\epsilon |S_i| / 4$ and $\epsilon |S_j| / 4$ respectively such that for all $v\in A_i$, $w'(v) \leq (1+\epsilon/3)w(S_i)/|S_i|$; likewise for $A_j$. Replace $A_i$ with a subset so that $|A_i| = \ceil{\epsilon |S_i| / 4}$, and in the same fashion take $|A_j| = \ceil{\epsilon |S_j| /4}$.

    Recalling that for all $v\in A_i$ we have $w'(v)\leq (1+\epsilon/3)w_i$ and hence \\$w(v) \leq w_i\sqrt{\frac{\log d}{\log t}} - \epsilon\gamma /4r\leq (w_i- \epsilon\gamma /4r)\sqrt{\frac{\log d}{\log t}}$. In particular, for all $v\in A_i, v'\in A_j$ we have $t^{-w(v)w(v')}\geq d^{-D_{ij} + \epsilon^2\gamma^2/16r^2}$. Since $w$ satisfies the gamma inequality, this means there are fewer than $td^{D_{ij}- \epsilon^2\gamma^2/16r^2}$ edges between $A_i$ and $A_j$. Since $D_{ij} \geq \epsilon^2\gamma^2/4r^2$, this is less than $\frac14 \floor{td^{D_{ij}}}$.  We now compute the probability that such sets $A_i, A_j$ exist in a randomly chosen $H\in \mathcal{D}(\parts,\admat)(t,d)$. There are $r^2$ choices for $i$ and $j$, and at most $2^{2t}$ choices for $A_i$ and $A_j$ given such a choice of indices.
    
    We first consider the case $i\neq j$. Let $N = |S_i| |S_j|$, $M = |A_i||A_j|$ and $n = \floor{td^{D_{ij}}}$. The number of edges between $A_i$ and $A_j$ is distributed according to a hypergeometric distribution $HG(N,M,n)$, with mean $Mn/N$. By a result of Vatutin and Mikhailov \cite{hgisbinomial}, the hypergeometric distribution can be written as a sum of $n$ independent Bernoulli random variables, and hence the tail bounds from Lemma~\ref{L:chernoff} apply. Therefore, the probability there are fewer than $\epsilon^2 td^{D_{ij}}/64$ edges between $A_i$ and $A_j$ (which is less than half the mean by taking $t$ large) is at most $\exp(-(\epsilon^2/64)^2 td^{D_{ij}}/2) \leq \exp(-\epsilon^2td^{D_{ij}}/2^{13})$, and thus the expected number of pairs $(A_i,A_j)$ of this kind  in a randomly selected $H$ is at most $r^22^{2t}\exp(-\epsilon^2td^{D_{ij}}/2^{13})\leq \epsilon/2r^2$ provided we take $D$ sufficiently large.
    
     If instead $i=j$, we follow the same argument, but only find one set $A_i\subset S_i$, and consider the edges within $A_i$. This time there are at most $2^t$ choices, each of which occuring with small probability by a hypergeometric tail bound for a total probability of existence of at most $\epsilon/r^2$ again. The result follows taking a union bound.
\end{proof}

    \section{Proof of lower bounds}
    
    We start with the application to graphs with a weighting where each weight is on a reasonable proportion of the vertices. Our first aim in this section is to prove Theorem~\ref{T:extremalfunction}. We will instead prove the following generalisation of that theorem; Theorem~\ref{T:extremalfunction} follows immediately by taking an appropriate choice of $\epsilon,\gamma$ and using only the optimal value $p = 0.715\dotsc$. 
    
    \begin{restatable}{theorem}{gammalb2}
    \label{T:gammalb2}
    Let $\epsilon,\gamma >0$ and $0<p<1$. There is a constant $D = D_{\ref{T:gammalb2}}(\epsilon,\gamma,p)$ such that the following holds.
    
    Let $\parts$ be a weight vector, and $\admat$ a matrix with entries from $[-\infty,1]$ such that $\gamma(\parts,\admat)>\gamma$ and all $\sigma_i > \epsilon$. 
    Then for all $t>d>D$, there is a graph \\$G = G_{t,d,p,\gamma(\parts,\admat),\epsilon}$ (which does not depend on $\parts$ or $\admat$ except through $\gamma$)  with the following properties.
    
    \begin{itemize}
        \item $|G| \geq (1-2\epsilon)\gamma(\parts,\admat) t\sqrt{\log_{1/1-p}d}$
        \item $G$ has density at least $p-2\epsilon$
        \item All but at most $2^{-t}$ proportion of graphs from $\mathcal{D}(\parts,\admat)(t,d)$ are not minors of $G$.
    \end{itemize}
    \end{restatable}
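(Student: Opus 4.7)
The plan is to construct $G$ as a blowup of a pseudorandom graph supplied by Lemma~\ref{L:basegraph}, and then use the minimality of $\gamma(\parts,\admat)$ to show any potential minor embedding of $H$ into $G$ leaves many non-adjacent pairs, so that a random $H\in\mathcal{D}(\parts,\admat)(t,d)$ fails to embed with probability at least $1-2^{-t}$. Set $l=\sqrt{\log_{1/(1-p)}d}$ and $m=\lceil(1-2\epsilon)\gamma(\parts,\admat)tl/d\rceil$, take $G_0=G_{d,p,\epsilon'}$ from Lemma~\ref{L:basegraph} for some $\epsilon'=\epsilon'(\epsilon)$, and let $G$ be the blowup obtained by replacing each $v\in V(G_0)$ by an independent set (a ``blob'') of size $m$, with a complete bipartite graph between two blobs iff their originals are $G_0$-adjacent. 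Then $|G|=dm\geq(1-2\epsilon)\gamma(\parts,\admat)tl$, and the density of $G$ differs from that of $G_0$ only by a factor of $1-1/d$, hence is at least $p-2\epsilon$ for $d$ large.

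Fix $G$ and treat $H$ as uniformly random in $\mathcal{D}(\parts,\admat)(t,d)$; I will union-bound the probability $H\prec G$ over all potential minor models $(V_h)_{h\in V(H)}$. Since each blob of $G$ is independent, two disjoint subsets of $V(G)$ are $G$-adjacent iff there is a $G_0$-edge between their projections $A_h=\pi(V_h)\subseteq V(G_0)$, so it suffices to union-bound over projection sequences $(A_h)$ with $\sum_h |A_h|\leq|G|$. Define $\bar\alpha_i=\frac{1}{|S_i|l}\sum_{h\in S_i}|A_h|$; the vertex-count inequality rearranges to $\sum_i \sigma_i \bar\alpha_i \leq (1-2\epsilon)\gamma(\parts,\admat)+o(1)$. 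If $\bar\alpha_i\bar\alpha_j\geq(1-\epsilon)^2 D_{ij}$ held for every $(i,j)$, then $\bar\alpha/(1-\epsilon)$ would be feasible for the optimisation defining $\gamma(\parts,\admat)$, forcing $\sum \sigma_i\bar\alpha_i \geq (1-\epsilon)\gamma$, a contradiction. Hence there is a pair $(i_0,j_0)$ with $\bar\alpha_{i_0}\bar\alpha_{j_0}\leq(1-\epsilon)^2 D_{i_0 j_0}$, which forces $D_{i_0 j_0}>0$.

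I would then apply Lemma~\ref{L:basegraph} to the projections of $S_{i_0}$ and $S_{j_0}$ inside $G_0$ (after first bucketing $h$'s by $|A_h|$ and extracting disjoint sub-collections within each bucket, since the lemma requires disjoint families), concluding that at least $c\,\sigma_{i_0}\sigma_{j_0}t^2 d^{-\bar\alpha_{i_0}\bar\alpha_{j_0}}$ pairs $(V_h,V_{h'})$ with $h\in S_{i_0}$, $h'\in S_{j_0}$ are non-$G$-adjacent, for some $c>0$. As $H$'s $\lfloor td^{D_{i_0 j_0}}\rfloor$ edges between $S_{i_0}$ and $S_{j_0}$ are chosen uniformly, the probability none lands on a non-adjacent pair is at most $\exp(-c' t d^{D_{i_0 j_0}-\bar\alpha_{i_0}\bar\alpha_{j_0}})\leq \exp(-c' t d^{\epsilon D_{i_0 j_0}})$; for $d$ large this beats the count of projection sequences, bounded by $\exp(O(n\log d))$ (counting by blob-assignments rather than vertex-assignments, so the $\log t$ factor in the naive bound is replaced by $\log d$), giving a total probability $\leq 2^{-t}$.

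The main obstacle is the application of Lemma~\ref{L:basegraph} when the projections $(A_h)$ are in general not pairwise disjoint; the disjointification via size-bucketing has to be done carefully enough to preserve the $d^{-\bar\alpha_{i_0}\bar\alpha_{j_0}}$ rate in the fractional non-adjacency count. A secondary subtlety is that when $t$ is much larger than $d$, the naive count $(t+1)^{|G|}$ of minor models is far too large for the union bound, and one must exploit the symmetry of vertices within each blob to replace it with the smaller count of projection sequences, which is what makes the slack $d^{\epsilon D_{i_0 j_0}}$ per model prevail.
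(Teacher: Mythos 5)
Your construction and high-level strategy coincide with the paper's: build $G$ as a blowup of the pseudorandom graph from Lemma~\ref{L:basegraph}, reduce a minor model to its projection (a $k$-blobbing) in $G_0$, use the optimisation defining $\gamma(\parts,\admat)$ to find a pair $(a,b)$ with $\bar\alpha_a\bar\alpha_b$ below $D_{ab}$, deduce from the pseudorandomness that many pairs from $S_a\times S_b$ are non-adjacent, and union bound over projection sequences rather than raw vertex assignments. The encoding-style counting and the ``small-parts via Markov'' step are also as in the paper.

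However, the step you flag as a ``secondary subtlety'' --- applying Lemma~\ref{L:basegraph} by bucketing and extracting a disjoint sub-collection --- is in fact a genuine gap, and your plan as written does not close it. The parts projected into $G_0$ form a $k$-blobbing with $k\approx\gamma t l/d$, and each small part (size $\leq l^2$) intersects up to $kl^2$ others, so a greedy extraction of pairwise disjoint parts yields only $s=O(d/l^3)$ of them, which is far smaller than $|S_a|\approx\sigma_a t$ when $t\gg d$. Lemma~\ref{L:basegraph} applied to these families gives $\Theta(d^{-x}s^2)$ non-adjacent pairs, which as a \emph{density} in $|S_a|\times|S_b|$ is only $\Theta\bigl(d^{-x}(s/t)^2\bigr)$ --- off by a factor $(d/(tl^3))^2$ from the density $\Theta(d^{-D_{ab}})$ that the union bound needs. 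Plugging your density into the avoidance probability gives roughly $\exp(-d^{2+\Theta(\epsilon)}/t)$, which is hopeless against the $\exp(\Theta(\gamma tl\log d))$ count of blobbings once $t$ grows past $d^{2}$-ish. The paper resolves this with a dichotomy you have not invoked: after passing to the small-parts sets $Y_a,Y_b$, it defines $Y_a^*\subset Y_a$ as those vertices whose part is non-adjacent to at most $\epsilon|S_b|\rho/2^8$ parts in $Y_b$. If both $Y_a^*,Y_b^*$ are large, one grows disjoint families one element at a time, using the bound $n_a k l^2\leq \epsilon|S_a|/64$ to guarantee a fresh disjoint candidate exists, and the $Y^*$-property to guarantee a candidate with few bad neighbours; this reaches $s\geq d^{D_{ab}-\epsilon_1}$ and contradicts Lemma~\ref{L:basegraph}. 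Otherwise $|Y_a\setminus Y_a^*|$ (say) is large, and then a constant fraction of parts in $S_a$ are each non-adjacent to a $\rho$-fraction of $S_b$, giving $\Theta(\rho|S_a||S_b|)$ non-adjacent pairs directly, with $\rho=d^{\epsilon_1-D_{ab}}$. It is precisely this dichotomy that converts ``cannot build a large disjoint sub-family'' into the density bound you need, and without it your argument does not close.

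A smaller issue: you apply the feasibility argument to $\bar\alpha$ directly, but without padding $\bar\alpha_i$ by a small positive constant (the paper uses $\beta_i=b_i/l+\epsilon\gamma/3$) you cannot conclude $D_{i_0j_0}$ is bounded away from $0$, which you need both to guarantee $D_{i_0j_0}>0$ (so there are edges at all) and to make the $d^{\epsilon D_{i_0j_0}}$ slack a genuine power of $d$.
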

    We will be building the graph for Theorem~\ref{T:gammalb2} as a blowup of a graph obtained from Lemma~\ref{L:basegraph}.
    \begin{definition}
    Let $G_0$ be a graph, and $k$ an integer. The \textsl{balanced blowup} $G_0(k)$ is the graph constructed from $|G_0|$ disjoint independent sets $I_v$ of size $k$  (each corresponding to a different vertex of $G_0$), and an edge between $x\in I_v,y\in I_w$ exactly when $v$ and $w$ are adjacent in $G_0$.
    \end{definition}
    What does a minor look like in such a blowup? Let $(V_h)$ be an $H$ model in $G_0(k)$. If, for some $h\in H, v\in G_0$, we use multiple vertices from $I_v$, we could also have a $H$ model deleting all but 1 such vertex. Considering only minimal models, we can assume $|V_h\cap I_v|\leq 1$ for all $h\in H, v\in G_0$. 
    
    This naturally lets us associate the parts $V_h$ with subsets of $G_0$, instead of $G_0(k)$. Having done so, each vertex of $G_0$ can appear in at most $k$ of these subsets. We call a collection of subsets of $V(G_0)$ where each vertex appears at most $k$ times a $\textsl{k-blobbing}$ in $G_0$.
    \begin{proof}[Proof of Theorem~\ref{T:gammalb2}]
        Let $G_0 = G_{d,p,\epsilon_1}$ be the graph from Lemma~\ref{L:basegraph}, applied with $\epsilon_1 = \epsilon^3\gamma^2 /64$ in place of $\epsilon$, and $d,p$ as in the theorem (we can do this provided $D > D_{\ref{L:basegraph}}(\epsilon_1,p)$).
        
        Let $l = \sqrt{\log_{1/1-p}d}$, and $ k = \floor{(1-\epsilon)\gamma(\parts,\admat)tl/d}$. We hereon assume that $D$, and hence $t,d,l,k$ are sufficiently that large all necessary inequalities hold; the exact requirements are suppressed. Let $G = G_0(k)$ be the balanced blowup of $G_0$. Then since $G_0$ has density at least $p$, $G$ has density at least $p-\epsilon$. As $\gamma tl/d > 2/\epsilon$, we have $|G| \geq (1-2\epsilon)\gamma tl$.
        
        We start with some notation. We will let $H\in \mathcal{D}(\sigma,\admat)(t,d)$ be an arbitrary element, and recall $H$ has vertex set $\{1,\dotsc,t\}$, and a vertex partition $S_1,\dotsc,S_r$ for some $r\geq 0$, where $|S_i| = s_i$ takes the value $\ceil{\sigma_i t}$ or $\floor{\sigma_i t}$, and there are exactly $\floor{td^{D_{ij}}}$ edges of $H$ between $S_i$ and $S_j$. We will show that if $H$ is a minor of $G$, this means $H$ has a particular structure --- and that when we later choose a random element $H\in \mathcal{D}(\parts,\admat)(t,d)$, this structure will be unlikely to occur. 
        
        If $H$ is a minor of $G$, this means there is some disjoint collection $(V_i)_{i=1}^t$ of disjoint connected non-empty subsets forming a model of $H$ in $G$. We will use the properties of $G_0$ to deduce certain pairs of parts cannot be adjacent in the model, and therefore cannot be edges of $H$.

        \begin{clm}
        For any collection $(V_i)_{i=1}^t$ of disjoint non-empty subsets of $V(G)$, there exist indices $a\leq b$ such that the density of pairs $(V_i,V_j)$ where $i\in S_a, j\in S_b$ and $V_i$ is (distinct from and) non-adjacent to $V_j$ is at least $\rho_{a,b} = \epsilon^2 d^{\epsilon_1 - D_{a,b}}2^{-13}$.
        \end{clm}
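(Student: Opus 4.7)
For each $i$, define the projection $V_i^0 := \{v \in V(G_0) : V_i \cap I_v \neq \emptyset\} \subset V(G_0)$. Disjointness of the $V_i$ in $V(G)$ together with $|I_v| = k$ forces each $v \in V(G_0)$ to lie in at most $k$ projections, so $\sum_i |V_i^0| \leq kd \leq (1-\epsilon)\gamma(\parts,\admat) t l$. Setting $\beta_a := (s_a l)^{-1}\sum_{i \in S_a}|V_i^0|$ then yields $\sum_a \sigma_a \beta_a \leq (1-\epsilon/2)\gamma(\parts,\admat)$ for $t$ sufficiently large. Since this is strictly less than $\gamma(\parts,\admat)$, a shift-and-scale argument on the optimisation defining $\gamma(\parts,\admat)$ (replacing $\beta$ by $c(\beta + \theta \mathbf{1})$ for suitable $c > 1$, $\theta > 0$ and arguing by contradiction against feasibility) produces an index pair $(a, b)$ with two simultaneous properties: a multiplicative gap $\beta_a\beta_b \leq (1-\epsilon_3)D_{ab}$ for some $\epsilon_3 = \Omega(\epsilon)$, and a lower bound $D_{ab} = \Omega(\epsilon^2 \gamma(\parts,\admat)^2)$. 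The constants can be tuned so that $\epsilon_3 D_{ab}/2$ strictly exceeds $\epsilon_1 = \epsilon^3\gamma(\parts,\admat)^2/64$.

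Fix this $(a, b)$. The mean of $|V_i^0||V_j^0|$ over $(i, j) \in S_a \times S_b$ equals $\beta_a\beta_b l^2 \leq (1-\epsilon_3) D_{ab} l^2$, so Markov's inequality delivers an $\Omega(\epsilon_3)$-fraction of pairs that are \emph{typical} in the sense $|V_i^0||V_j^0| \leq xl^2$ where $x := (1-\epsilon_3/2) D_{ab} < D_{ab}$. The central obstacle is now visible: the projections $V_i^0$ are not pairwise disjoint in $V(G_0)$, so Lemma~\ref{L:basegraph} does not apply to them directly. To handle this I would bucket the typical pairs by exact sizes $(u, v)$, and within each bucket greedily colour the joint conflict graph on $\{V_i^0 : i \in S_a, |V_i^0| = u\} \cup \{V_j^0 : j \in S_b, |V_j^0| = v\}$, where two projections conflict iff they share a vertex of $V(G_0)$. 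Since each vertex of $V(G_0)$ lies in at most $k$ projections, this conflict graph has maximum degree at most $\max(u,v)k$ and hence admits a greedy colouring into pairwise-disjoint colour classes. For each colour class split as $D_a \cup D_b$ with $|D_a|, |D_b| \geq d^{\epsilon_1 + x}$, applying Lemma~\ref{L:basegraph} to $(D_a, D_b)$, together with a standard averaging argument over random subsamples to handle unequal class sizes, yields at least $|D_a||D_b| d^{-x}/2$ non-adjacent pairs in $D_a \times D_b$.

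Summing first over pairs of large colour classes inside each bucket and then over typical buckets produces, after absorbing the loss from small colour classes (those of size below $d^{\epsilon_1+x}$, whose combined contribution is polynomially smaller than $s_a s_b$ by virtue of $k = O(\gamma(\parts,\admat) tl/d)$), a non-adjacent pair count of order $\Omega(\epsilon_3) s_a s_b d^{-x}$. Writing $d^{-x} = d^{-D_{ab}} d^{\epsilon_3 D_{ab}/2}$ and invoking the strict inequality $\epsilon_3 D_{ab}/2 > \epsilon_1$ secured in the first paragraph, this dominates the target $\rho_{a,b} = \epsilon^2 d^{\epsilon_1 - D_{ab}}/2^{13}$ once $d$ is sufficiently large. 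The main obstacle, and where I expect most of the accounting to live, is controlling the loss from the greedy extraction: the factor $\max(u,v)k$ lost per bucket can be large, and making the estimate uniform over all good buckets --- not only the typical regime where $u, v \sim \gamma(\parts,\admat)l$, but also extreme regimes where one of $u, v$ is near $xl^2$ and the other is small --- is delicate. A sharper argument exploiting the near-disjointness of typical projections (whose expected pairwise overlap scales like $(\gamma(\parts,\admat)l)^2/d \to 0$) may be needed if the greedy bound proves insufficient in those extremes.
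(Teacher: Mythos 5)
Your opening moves match the paper's: project each $V_i$ to $V_i^0\subset V(G_0)$, use the $k$-blobbing bound $\sum_i|V_i^0|\leq kd$, normalise to $\beta$-values $\beta_a\approx b_a/l$, observe that $\sum\sigma_a\beta_a$ falls strictly below $\gamma(\parts,\admat)$, and run a shift-and-scale argument on the defining optimisation to extract a pair $(a,b)$ with a multiplicative gap $\beta_a\beta_b\leq(1-\Omega(\epsilon))D_{ab}$ and a lower bound $D_{ab}=\Omega(\epsilon^2\gamma^2)$; Markov then restricts attention to the below-average projections. Up to here the two arguments are essentially identical.

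The two proofs then part ways, and the route you take has a genuine gap. You propose to greedily colour the \emph{joint} conflict graph on the $S_a$- and $S_b$-projections (within a size bucket) and then apply Lemma~\ref{L:basegraph} to the split $D_a\cup D_b$ of each colour class. But a greedy colouring gives you no control over how each colour class is apportioned between the two sides. In the extreme case where no $S_a$-projection shares a $G_0$-vertex with any $S_b$-projection --- perfectly consistent with the $k$-blobbing structure --- the conflict graph is bipartite along the $S_a/S_b$ split and a two-colouring can place all $S_a$-projections in one class and all $S_b$-projections in the other, making every $|D_a||D_b|$ vanish. Your proposed sum over colour classes is then $0$, and the argument produces nothing. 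The ``random subsampling to handle unequal class sizes'' and the closing remark about near-disjointness of typical projections would not repair this: the failure mode is not about size imbalance but about the split being degenerate, and no expected-overlap bound on individual pairs rules out a bipartite conflict structure. There is also a secondary accounting issue you half-flag: even granted balanced splits, the sum $\sum_c|D_a^c||D_b^c|$ over colour classes has no clean lower bound of the form $\Omega(N_aN_b/\text{(number of colours)})$, since classes can have very different sizes.

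The paper's closing argument is specifically engineered to avoid exactly this problem. Rather than trying to exhibit large pairwise disjoint families outright, it defines $Y_a^*\subset Y_a$ (the vertices of $S_a$ whose projections are non-adjacent to few $Y_b$-projections) and $Y_b^*$ symmetrically, and runs a dichotomy. If both $Y_a^*$ and $Y_b^*$ are large, it builds the families $(A_i),(B_j)$ one element at a time: at each step, only $O(kl^2\cdot d^{D_{ab}-\epsilon_1})\ll|S_a|$ candidates are blocked by disjointness, and if every remaining candidate were non-adjacent to $\geq\rho n_b/2$ already-chosen parts, double-counting would force some chosen part in $Y_b^*$ to be non-adjacent to so many $Y_a^*$-parts that it could not be in $Y_b^*$ --- a contradiction that lets the construction continue until it violates Lemma~\ref{L:basegraph}. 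This forces the other branch of the dichotomy, $|Y_a\setminus Y_a^*|\geq\epsilon|S_a|/32$ say, which by the very definition of $Y_a^*$ directly yields $\Omega(\epsilon^2\rho)|S_a||S_b|$ non-adjacent pairs. The point is that you never need to count non-adjacencies coming from the disjoint families at all; you only need to know that such families \emph{cannot exist}, and then collect the non-adjacencies from the ``bad'' side. This incremental, contradiction-driven construction sidesteps the colour-class-split problem entirely, and it is the piece your proposal is missing.
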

        
       Before proving this claim, we will show how it implies the theorem. Let $H$ now be a randomly chosen element of $\mathcal{D}(\parts,\admat)(t,d)$. If $H$ is to be a minor of $G$, there must be some model $(V_h)_{h\in [t]}$ of $H$ in $G$. Given this model, our claim gives indices $a,b$ as above. We take $D$ sufficiently large that $\epsilon^2D^{\epsilon_1/2}2^{-13} > 1$, so we may assume $D_{a,b} > \epsilon_1/2$ as else $\rho > 1$ which cannot be the case since it is a density.
        
        This gives us some structural information about $H$: all of the edges between $S_a$ and $S_b$ avoid the $\rho_{a,b}$ proportion of non-adjacent pairs --- we would like to show this is unlikely. Since $H$ is chosen randomly, this means the edges between $S_a$ and $S_b$ form a uniformly random $\floor{td^{D_{a,b}}}$ subset. We now break into two cases for the analysis, and firstly assume $a \neq b$.
        
        In this case, we are picking $\floor{td^{D_{a,b}}}\geq \frac12 td^{D_{a,b}}$ edges. The probability we avoid the $\rho_{a,b}$ proportion can then be bounded as follows.
        
        \begin{align*}
            \mathbb{P}(\textrm{we avoid these bad edges})\leq \binom{(1-\rho_{a,b})|S_a||S_b|}{\floor{td^{D_{a,b}}}}/\binom{|S_a||S_b|}{\floor{td^{D_{a,b}}}} \leq (1-\rho_{a,b})^{\frac12 td^{D_{a,b}}} \\\leq \exp(-\frac12td^{D_{a,b}}\rho_{a,b})\leq \exp(-\epsilon^2td^{\epsilon_1}/2^{14})
        \end{align*}
        
        If instead $a = b$, we can perform the analagous calculation for edges within $S_a$, and obtain the following equivalent bound.
        \begin{align*}
                    \mathbb{P}(\textrm{we avoid these bad edges})\leq \binom{(1-\rho_{a,b})\binom{|S_a|}{2}}{\floor{td^{D_{a,b}}}}/\binom{\binom{|S_a|}{2}}{\floor{td^{D_{a,b}}}}\leq\exp(-\epsilon^2td^{\epsilon_1}/2^{14})
        \end{align*}
        
        Therefore, for a random graph $H$, we have bounded the probability a fixed collection of subsets forms a model of $H$ in $G$ by this final quantity. If we can also show there are not too many collections of subsets which could form a model, we will be able to show our result. In fact, it suffices to consider only minimal models; if a graph does not have any minimal models, it will have no models at all. Using the discussion before our theorem, we will associate a minimal model $(V_h)$ in $G$ with a $k-$blobbing in $G_0$. We can now use an encoding argument.
        
         Start by listing (in order) $kd$ vertices of $G_0$; these are the vertices that will be allowed to be used in the blobbing (and note since every blobbing has total size at most $kd$, this is valid). Next, take a $\{0,1\}$ valued sequence of length $kd$, where the first 1 denotes the index where we start listing $V_1$, the second where we have finished listing $V_1$ at the previous index and started listing $V_2$, and finally the $t+1^{st}$ 1 denoting where we stopped listing $V_t$ at the previous index. Any duplicate vertices are ignored.  This gives an (injective) way to encode any given blobbing, and it is easily seen there are at most $(2d)^{kd}$ such encodings, hence at most that many blobbings.

       We can now take $D$ large enough that $(2d)^{kd}\exp(-\epsilon^2d^{\epsilon_1}t/2^{14}) \leq 2^{-t}$. Having done so, except with probability at most $2^{-t}$ a randomly chosen $H$ will not have any minimal model in $G_0(k)$,  and so $H$ will not be a minor of $G$. Therefore, it suffices to prove Claim 1 to prove our theorem.
       
        \begin{proof}[Proof of Claim 1]
        
        We start by replacing each $V_i$ with the corresponding projection to $V(G_0)$; this does not affect adjacency, and our subsets now form a $k$-blobbing in $G_0$.
        
        Recall that $H$ is equipped with a vertex partition $(S_i)_{i=1}^r$, where $|S_i| = \floor{\sigma_i t}$ or $\ceil{\sigma_i t}$. Suppose that $X_i$ is the (ordered) collection of subsets $(V_{\sum_{j=1}^{i-1}s_i + 1},\dotsc,V_{\sum_{j=1}^i s_i}) $ corresponding to $S_i$, and let $b_i$ denote the average size of these $|S_i|$ subsets. Since the subsets $(V_i)_{i=1}^t$ form a $k$-blobbing in $G_0$, $\sum_i |S_i|b_i\leq dk\leq (1-\epsilon)\gamma tl$. 
        In particular, $\sum_i \sigma_i tb_i < (1-2\epsilon/3)\gamma tl$ - since all $\sigma_i t > \gamma t > \epsilon D$, the replacing $\floor{\sigma_i t}$ with $\sigma_i t$ cannot increase the sum by a large factor.
        
        Let $\beta_i = b_i/l + \epsilon\gamma /3$, and so $\sum_i\sigma_i(\beta_i/(1-\epsilon/3)) \leq\gamma(\parts,\admat)$.
        Recall $\gamma(\parts,\admat)$ is the minimum of $\sum \sigma_i\beta'_i$ subject to all $\beta'_i \beta'_j \geq D_{ij}$, and $\beta'_i > 0$. Therefore, there must be some indices $a,b$ such that $\beta_a\beta_b < D_{ab}(1-\epsilon/3)^2\leq D_{ab}(1-\epsilon/2)$, and in particular $b_ab_b\leq D_{ab}(1-\epsilon/2)l^2$. Since $\beta_a,\beta_b > \epsilon\gamma /3$, we must have $D_{ab}>\epsilon^2\gamma^2/9$. 
        
        By Markov's inequality, the subsets $V_w$ corresponding to at least $\epsilon/16$ proportion of $w\in S_a$ have size at most $(1+\epsilon/8)b_a$, and call the set of such $w$ $Y_a$, where we must have $|Y_a|\geq \epsilon|S_a|/16$. We analagously define $Y_b$.
        
        Suppose that $A = V_{v}$ is a part corresponding to some $v\in Y_a\subset V(H)$, and $B = V_{w}$ likewise for $w\in Y_b$. Let $\rho = d^{\epsilon_1-D_{ab}}$, and let $Y_a^*$ be the set of vertices $i\in Y_a$ such that the corresponding part $V_i\subset V(G_0)$ is disjoint from and not adjacent to at most $\epsilon |S_b|\rho/2^8$ parts corresponding to a vertex in $Y_b$. In particular, elements of $Y_a^*$ must be non-empty.
        
        Suppose first that $|Y_a^*|\geq \epsilon|S_a|/32$, and similarly for $Y_b^*$. We will build up families $(A_i)_{i=1}^{n_a}$ and $(B_j)_{j=1}^{n_b}$ of disjoint sets from $Y_a^*$, $Y_b^*$ respectively, one element at a time and in a balanced fashion, such that there is at most a density $\rho/2$ of disjoint pairs between them. We would like to attain $n_a = n_b \geq d^{D_{ab}-\epsilon_1}$. Suppose we have not yet done so, and thus reordering if necessary $n_a\leq n_b\leq d^{D_{ab}-\epsilon_1}$. We would like to find another part corresponding to a vertex in $Y_a*$ that we can add to the family $(A_i)$, and therefore increase $n_a + n_b$ to get closer to our goal.
        
        For $A$ a part (i.e. some subset $V_v$) corresponding to a vertex $v \in Y_a* \subset V(H)$, and $B$ likewise for some $w\in Y_b^*$, we have $1\leq|A||B|\leq l^2$, and so $|A|\leq l^2$. This means each already chosen part intersects at most $kl^2$ other parts, and in particular either $n_a \geq d^{D_{ab}-\epsilon_1}\geq d^{D_{ab}(1-\epsilon/8) + \epsilon_1}$, or there are at least $\frac{\epsilon}{32}\sigma_at - d^{D_{ab}-\epsilon_1}kl^2 \geq \frac{\epsilon}{64}|S_a|$ parts which correspond to a vertex in $Y_a^*$, are so far unchosen, and are disjoint from all already chosen sets. We remark that this is the only place where we use the lower bound on $\sigma_a$ (except through the implicit upper bound on $r$).
        
        If all of these parts are non-adjacent to at least $\rho n_b/2$ parts corresponding to vertices from $Y_b^*$, by averaging some part $V_w: w\in Y_b^*$ is disjoint from and non-adjacent to at least $\epsilon\rho/128 |S_a|$ sets from $Y_a^*$. This is a contradiction, since then $w$ would not be in $Y_b^*$! Hence some choice of part $A$ is adjacent to at most $\rho n_b/2$ such parts ,and we can therefore add it to our family $(A_i)$. We continue in this fashion to build up such families with $n_a = n_b \geq d^{D_{ab}-\epsilon_1/8}$. But this now contradicts the definition of $G_0$! Hence our assumption $|Y_a^*|\geq \epsilon|S_a|/32, |Y_b^*|\geq \epsilon|S_b|/32$ must have been incorrect.
        
        We therefore have (reordering if necessary) that $|Y_a\setminus Y_a^*|\geq \epsilon|S_a|/32$. In particular, there are at least $\epsilon^2\rho |S_a||S_b|/2^{13}$ pairs $v\in Y_a\setminus Y_a^*$, $w\in Y_b$ such that $V_v$ and $V_w$ are disjoint and non-adjacent, which finishes the proof of our claim and thus the theorem.
        \end{proof}
    \end{proof}
    
    We remark that our graphs (for fixed $p,d,\epsilon_1$) are different sized blowups of the same base (pseudorandom) graph, and so will look very similar. It is interesting that the random structure required is universal.

We now move onto the lower bound for complete bipartite graphs. For $H = K_{ft/\log t,t}$, we have $\gamma_2(H) \leq 2\sqrt{f/\log t} = o(1)$ and hence we cannot apply the previous theorem --- we have $\sigma_1 < \epsilon$. However, we are only looking for a complete bipartite graph, and so even a single non-edge will suffice for our purposes, rather than the $\epsilon^2\rho$ proportion required earlier. This allows us to mimic the proof of the previous theorem in this case --- we leave it as a sketch, with the details able to be filled in as in Theorem~\ref{T:gammalb2}.

\lowerboundbipartite*

\begin{proof}[Proof Sketch]

    Let $d = ft/\log t$ (so $H$ has $td$ edges), and let $G_0$ be the graph $G_{d,p,\epsilon}$ from Lemma~\ref{L:basegraph}, with $p = 0.715\dotsc$. Let $G = G_0(k)$, where\\$k=\floor{2(1-\epsilon)\log t\sqrt{1/f\log(1/1-p)}}$. This $G$ has at least $2(1-2\epsilon)t\sqrt{f/\log(1/1-p)}$ vertices, and density at least $p-\epsilon$ --- hence average degree at least $2(\alpha - 3\epsilon)t\sqrt{f}$. We again neglect rounding throughout the proof by taking $T$ sufficiently large.
    
    Suppose we have an $H$ model in $G$, with subsets $X_1 = \{A_1,\dotsc,A_{ft/\log t}\}$ representing the left hand class. and $X_2 = \{B_1,\dotsc,B_t\}$ the right hand. If $b_i$ is the average size of the sets in $X_i$, by the definition of $\gamma_2$, we must have $b_1b_2\leq (1-\epsilon)^2\log_{1/1-p}t$. Further, at least $\epsilon/8$ proportion of sets (from each of $X_1$ and $X_2$) have size at most $(1+\epsilon/8)$ times the average size by Markov's inequality. Call these sets $Y_1$, $Y_2$ and note that if $A_i\in Y_1, B_j\in Y_2$ then $|A_i||B_j|\leq (1-\epsilon)\log_{1/1-p}d$.
    
    We note that these sets are sufficiently small that the results of Lemma~\ref{L:basegraph} will guarantee that for any $d^{1-\epsilon}$ sets representing the left hand class, and the same for the right hand class which are pairwise disjoint, then some non-zero density (and in particular, at least 1) of the pairs is non-adjacent.
    
    How can we find these disjoint sets? Each set must have size at most $\log t / \sqrt{f\log(1/1-p)}$, since we can assume all sets under consideration are non-empty, and so intersects at most $O((\log t)^2 / f)$ other sets. It follows we can greedily find sufficiently many (of order $t^{1-\epsilon}$) disjoint sets to apply Lemma~\ref{L:basegraph}. This contradicts that $(A_i),(B_j)$ was a $H$ model, since if a positive density of pairs are non-adjacent in particular at least one such pair is.
    
\end{proof}
\subsection*{Acknowledgements}
The author was supported by an EPSRC DTP Studentship. The author would also like to thank Andrew Thomason for many helpful discussions.
\bibliographystyle{plain}
\bibliography{biblio}

\end{document}